\newtheorem{theorem}{Theorem}
\newtheorem{corollary}{Corollary}
\newtheorem{definition}{Definition}
\newtheorem{example}{Example}
\newtheorem{lemma}{Lemma}
\newtheorem{proposition}{Proposition}
\newtheorem{remark}{Remark}
\newcommand{\eps}{\varepsilon}
\DeclareMathOperator{\dom}{dom}
\DeclareMathOperator{\coker}{coker}
\DeclareMathOperator{\ev}{ev}
\DeclareMathOperator{\im}{Im}
\DeclareMathOperator{\fix}{Fix}
\DeclareMathOperator{\co}{co}
\DeclareMathOperator{\bd}{bd}
\DeclareMathOperator{\Int}{int}
\DeclareMathOperator{\supp}{supp}
\DeclareMathOperator{\sgn}{sgn}
\DeclareMathOperator{\Liminf}{Lim\;inf}
\newcommand{\w}{\tilde}
\newcommand{\map}{\multimap}
\newcommand{\<}{\leqslant}
\newcommand{\n}{{n\geqslant 1}}
\newcommand{\K}{{k\geqslant 1}}
\DeclareMathOperator{\F}{F}
\newcommand{\R}[1]{\mathbb{R}^{#1}}
\newcommand{\res}[2]{#1\:\rule[-1.5mm]{0.45pt}{4mm}\,\rule[-1mm]{0mm}{4mm}_{#2}}
\begin{document}
\title[On nonlocal Cauchy problems for constrained differential inclusions]{On nonlocal Cauchy problems for constrained differential inclusions in Euclidean space}
\author{Rados\l aw Pietkun}
\subjclass[2010]{34A60, 34B10, 47H10, 47H11}
\keywords{differential inclusion with constraints, generalized gradient, topological degree, Fredholm operator, nonlocal boundary value problem, Floquet problem, antiperiodic problem}
\address{Kwitn{\k a}cej Jab\l oni 15, 87-100 Toru\'n, Poland}
\email{rpietkun@pm.me}

\begin{abstract}
We investigate the existence of solutions of constrained nonlinear differential inclusions with nonlocal boundary conditions. Our viability theorems are based on the assumption that the right-hand side of differential inclusion is defined on the domain possessing a certain type of geometric regularity, expressed in terms of locally Lipschitz functional constraints. For solvability of the Floquet boundary value problems associated with differential inclusions we engage the bound set technique. It relies on the usage of not necessarily differentiable bounding functions.
\end{abstract}
\maketitle

\section{Introduction}
\par Given a compact real interval $I=[0,T]$ and a multivalued map $F\colon I\times K\map\R{N}$ defined on a closed subset $K$ of the $N$-dimensional Euclidean space $\R{N}$, we look for solutions of the following constrained nonlocal Cauchy problem
\begin{equation}\label{inclusion}
\begin{gathered}
x'\in F(t,x),\;\text{a.e. on } I,x\in K\\
x(0)=g(x),
\end{gathered}
\end{equation}
where $g\colon C(I,\R{N})\to\R{N}$ is a continuous boundary operator, specified later. The main results of this paper were obtained by means of a topological fixed point theory and coincidence degree arguments.
\par In the case of a standard initial condition, i.e. when $g$ is a constant function, the problem \eqref{inclusion} is at the core of interest of the so-called viability theory, which has been already devoted many monographs (see \cite{aubin2,aubin}). When a constrained (multivalued) differential equation is enriched with nonlocal boundary condition, then the problem becomes much more sophisticated. The very idea of a systematic study of nonlocal Cauchy problems, with the right-hand side not subjected to any additional state constraints, was initiated by Byszewski and Lakshmikantham. They proved, about 1991, the existence and uniqueness of mild solutions for nonlocal semilinear differential equations in Banach spaces. The consideration for nonlocal initial condition is stimulated by the observation that this type of conditions is more realistic than usual ones in treating physical problems. Some typical examples of nonlocal initial conditions, to which we refer in the course of this work, are: $g(x)=x(T)$ (periodicity condition); $g(x)=-x(T)$ (antiperiodic condition); $g(x)=Cx(T)$ with $C\in GL(N,\R{})$ (Floquet condition), $g(x)=\sum_{i=1}^n\alpha_ix(t_i)$, where $\sum_{i=1}^n|\alpha_i|\<1$ and $0<t_1<\ldots<t_n\<T$ (multi-point discrete mean condition); $g(x)=\frac{1}{T}\int_0^Th(x(t))\,dt$, with $h\colon\R{N}\to\R{N}$ such that $|h(x)|\<|x|$ (mean value condition).\par Section 2. contains basic definitions and preliminary results. Subsequently, in Section 3. we show (in Theorem \ref{th9}.) that there is a solution of nonlocal Cauchy problem \eqref{inclusion}, which is a viable trajectory in the set $K=f^{-1}((-\infty,0])$ of functional constraints, possessing the geometry of the so-called strictly regular set (see Definition \ref{K}). In this distinguished class of state constraints there are both nonconvex sets and sets with empty interiors. Customary used tangential conditions formulated in terms of the intersection with the Bouligand contingent cone are not sufficient for our purposes. Therefore, relying on ideas contained in the work of Bader and Kryszewski (\cite{bader}), we use a new and somewhat restrictive tangential condition expressed in terms of the polar cone to the generalized Clarke gradient of a locally Lipschitz map $f$ representing the set of state constraints. The proof of the aforementioned existence result refers to the knowledge about the topological structure of the solution set of differential inclusions with constraints. \par Further in the paper we focus on the issue of the existence of antiperiodic solutions of constrained differential inclusions, aiming to weaken the assumptions concerning geometrical properties of the constraint set (i.e. that it is contractible and strictly regular). The technique that we use to prove the existence of antiperiodic solutions of differential inclusion, whose set of functional constraints is represented by a mapping $f$ having no critical points outside the $0$-sublevel set of $f$ (Theorem \ref{ant}.), consists in indicating approximate solutions which satisfy relaxed constraints, i.e. they are viable in appropriately ``thickened'' $\eps$-sublevel set (for $\eps>0$). To this end, we show previously Theorem \ref{th1}. and \ref{borsuk1}., which ensure the existence of antiperiodic solutions on the sets of epi-Lipschitz type, provided the suitable tangency conditions are met. Applied by us tangential conditions assume a particularly simplified form 
in the case where a ball centered at origin plays the role of the viability domain of the right-hand side of \eqref{inclusion} (see Theorem \ref{th2}.).\par The bound sets approach which we use in the following part of Sections 3. of our paper originates from Gaines and Mawhin (\cite{maw2}), who applied it for obtaining the existence of periodic oscillations of first-order as well as second-order ordinary differential equations. The notion of a bounding function, closely related to the concept of a guiding function, was systematically used for the study of multivalued first-order Floquet problems, as well as other boundary value problems, by Andres et al. (see \cite{andres1,andres2,andres3}). Our approach to the Floquet boundary value problem also uses the concept of bounding function (Theorem \ref{floqth}.), but the viability result obtained by us is maintained more in the spirit of \cite{mawhin}. Given by us generalization of the definition of an autonomous bound set for multivalued Floquet problem is correlated with the sign of the Clarke directional derivative (in contrast to \cite{andres1}, where the directional derivative in the sense of Penot is utilized). Our work concludes with the observation (Theorem \ref{floqth2}.) that the presence of natural tangential conditions expressed in term of vectors normal to an open set, possessing a completely arbitrary geometry, implies the existence of bounding functions in the sense that ensures the solvability of the Floquet boundary value problem.\mbox{}\vspace{-0.2cm}
\section{Notations and auxiliary results}
Let $X$ and $Y$ be a Banach space. An open (resp. closed) ball with center $x\in X$ and radius $r>0$ is denoted by $B(x,r)$ ($D(x,r)$). If $A\subset X$, then $\overline{A}$ (resp. $\Int A$, $\bd A$, $\co A$) denotes the closure (the interior, the boundary, the convex hull) of $A$. $|\cdot|$ is the standard norm in $\R{N}$, i.e. $|x|=\sqrt{\langle x,x\rangle}$, where $\langle\cdot,\cdot\rangle$ is the inner product. 
\par For $I\subset\R{}$, $(C(I,\R{N}),||\cdot||)$ is the Banach space of continuous maps $I\to \R{N}$ equipped with the maximum norm and $AC(I,\R{N})$ is the subspace of absolutely continuous functions. By $(L^1(I,\R{N}),||\cdot||_1)$ we mean the Banach space of all Lebesgue integrable maps.
\par A multivalued map $F\colon X\map Y$ assigns to any $x \in X$ a nonempty subset $F(x)\subset Y$. The set of all fixed points of the multivalued (or univalent) map $F$ is denoted by $\fix(F)$. A set-valued map $F\colon X\map Y$ is called upper (resp. lower) semicontinuous (usc or lsc in short) if $\{x\in X\colon F(x)\subset U\}$ is open (closed) in $X$ whenever $U$ is open (closed) in $Y$. If the image $F(X)$ is relatively compact in $Y$, then we say that $F$ is a compact multivalued map. We say that $F\colon I\map\R{N}$ is measurable, if $\{t\in I\colon F(t)\subset A\}$ belongs to the Lebesgue $\sigma$-field of $I$ for every closed $A\subset\R{N}$. We shall call $F\colon I\times X\map Y$ a lower Carath\'eodory type multivalued mapping if it satisfies:
\begin{itemize}
\item[(i)] the multimap $F(t,\cdot)$ is lsc for each fixed $t\in I$, and 
\item[(ii)] the multimap $F(\cdot,\cdot)$ is product measurable on $I\times X$, with $X$ equipped with its Borel structure.
\end{itemize}
\par A set-valued map $F\colon X\map Y$ is admissible (comp. \cite[Def.40.1]{gorn}) if there is a metric space $Z$ and two continuous functions $p\colon X\to Z$, $q\colon Z\to Y$ from which $p$ is a Vietoris map such that $F(x)=q(p^{-1}(x))$ for every $x\in X$. It turns out that every acyclic multivalued map, i.e. an usc multimap with compact acyclic values, is admissible. In particular, every usc multivalued map with compact convex values is admissible.
\par Let ${\mathcal M}$ be the set of triples $(Id-F,\Omega,y)$ such that $\Omega\subset X$ is open bounded, $Id$ is the identity, $F\colon\overline{\Omega}\map X$ is a compact usc multimap with closed convex values, and $y\not\in(Id-F)(\partial\Omega)$. Then it is possible to define, using approximation methods for multivalued maps, a unique topological degree function $\deg\colon{\mathcal M}\to\mathbb{Z}$ (see \cite{deimling,gorn} for details). This degree inherits directly all the basic properties of the Leray-Schauder degree.
\par Let $H_*$ be the singular homology functor with rational coefficients from the category of topological spaces and continuous maps to the category of graded vector spaces and linear maps of degree zero. If $X$ is a topological space, then $H_*(X)=\{H_q(X)\}_{q\geqslant 0}$ is a graded vector space, $H_q(X)$ being the $q$-dimensional singular homology group of $X$. For a continuous mapping $f\colon X\to Y$, $f_*=\{f_{q*}\}_{q\geqslant 0}$ is the induced linear map, where $f_{q*}\colon H_q(X)\to H_q(Y)$. It is a well-known fact that if $X$ is a compact absolute neighbourhood retract (ANR), then $X$ is of finite type, i.e. $\dim H_q(X)<\infty$ for all $q$ and $H_q(X)=0$ for almost all $q$. Thus, the Euler-Poincar\'e characteristic $\chi(X):=\sum_{q\geqslant 0}(-1)^q\dim H_q(X)$ is a well-defined integer.
\par Let $f\colon\dom(f)\to\R{}$ be a locally Lipschitz function, defined on open subset $\dom(f)$ of the Euclidean space $\R{N}$. The upper (resp. lower) directional derivative of $f$ at $x\in\dom(f)$ in the direction $v\in\R{N}$ in the sense of Clarke is defined by
\[f^\circ(x;v):=\underset{y\,\to x,\,h\,\to\, 0^+}{\limsup}\,\frac{f(y+hv)-f(y)}{h}\;\;\;\left(f_\circ(x;v):=\underset{y\,\to x,\,h\,\to\, 0^+}{\liminf}\,\frac{f(y+hv)-g(y)}{h}\right).\] Remind that $f$ is said to be (directionally) regular at $x$ provided, for every $v$, the usual one-sided directional derivative $f'(x;v)$ exists and $f'(x;v)=f^\circ(x;v)$. The generalized gradient of $f$ at $x$ is defined as a set
\[\partial f(x):=\left\{p\in\R{N}\colon\langle p,v\rangle\<f^\circ(x;v)\;\forall\,v\in\R{N}\right\}.\] It is well known that the mapping $\dom(f)\times\R{N}\ni(x,v)\mapsto f^\circ(x;v)$ is upper semicontinuous (as a univalent map), while the function $\R{N}\ni v\mapsto f^\circ(x;v)$ is Lipschitz continuous, subadditive and positively homogeneous. In turn, $\dom(f)\ni x\mapsto\partial f(x)\subset\R{N}$ is a compact convex valued upper semicontinuous set-valued map. The tangency conditions formulated in the course of Section 3. utilize the concept of the negative polar cone to $\partial f(x)$ \[\partial f(x)^\circ=\left\{v\in\R{N}\colon\langle p,v\rangle\<0\;\forall\,p\in\partial f(x)\right\}=\left\{v\in\R{N}\colon f^\circ(x;v)\<0\right\}.\] \par\noindent Let $f$ be as above.
\begin{definition}
We say that a closed subset $K\subset\R{N}$ is represented by the function $f$ or $f$ is a representing function of $K$ if $K$ coincides with the $0$-sublevel set of $f$, i.e. \[K=\{x\in\dom(f)\colon f(x)\<0\}.\]
\end{definition}
The geometrical regularity of sets forming a viability domain of the right-hand side of inclusion \eqref{inclusion}, to which we refer from the outset, characterizes the following:
\begin{definition}\label{K}
The closed set $K$, represented by a locally Lipschitz function $f\colon\dom(f)\to\R{}$, is said to be 
\begin{itemize}
\item[(i)] strongly regular, if $\;0\not\in\partial f(x)$ for every $x\in\bd K$, 
\item[(ii)] strictly regular, if \[\underset{\underset{{\scriptstyle y\notin K}}{y\to x}}{\liminf}\inf_{z\in\partial f(y)}|z\,|>0\] for every $x\in\bd K$,
\item[(iii)] regular, if there exists an open neighbourhood $U$ such that $K\subset U\subset\dom(f)$ and $0\not\in\partial f(x)$ for $x\in U\setminus K$.
\end{itemize}
\end{definition}
\begin{corollary}
It is clear that a strongly regular set is strictly regular. Furthermore, any strictly regular set is regular.
\end{corollary}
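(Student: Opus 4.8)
The plan is to translate both implications into statements about the single scalar function $\rho(y):=\dist(0,\partial f(y))=\inf_{z\in\partial f(y)}|z|$, defined on $\dom(f)$. Since $\partial f(y)$ is compact, one has $0\notin\partial f(y)$ if and only if $\rho(y)>0$. In this language strong regularity reads $\rho(x)>0$ for every $x\in\bd K$; strict regularity reads $\liminf_{y\to x,\,y\notin K}\rho(y)>0$ for every $x\in\bd K$; and regularity asserts the existence of an open set $U$ with $K\subset U\subset\dom(f)$ on which $\rho>0$ throughout $U\setminus K$. Two preliminary remarks will be used: because $K$ is closed, every $x\in\bd K$ is a limit of points of $\R{N}\setminus K$, so the restricted lower limit in (ii) is taken over a nonempty family; and because $\dom(f)$ is open and contains $K$, every $x\in\bd K$ admits a whole ball around it inside $\dom(f)$.

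For the first implication the key step is to show that $\rho$ is lower semicontinuous on $\dom(f)$, which will make strong regularity force strict regularity in a single line. I would argue directly from the upper semicontinuity and compact-valuedness of $\partial f$: fix $x$ and $\eps>0$, and consider the open set $W:=\{z\in\R{N}\colon|z|>\rho(x)-\eps\}$. By definition of $\rho(x)=\dist(0,\partial f(x))$ we have $\partial f(x)\subset W$, so upper semicontinuity yields a neighbourhood of $x$ on which $\partial f(y)\subset W$; for such $y$ every $z\in\partial f(y)$ satisfies $|z|>\rho(x)-\eps$, whence $\rho(y)\geqslant\rho(x)-\eps$. Letting $\eps\to0^+$ gives $\liminf_{y\to x}\rho(y)\geqslant\rho(x)$. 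If $K$ is strongly regular then $\rho(x)>0$ at each $x\in\bd K$, and since restricting the approach to $y\notin K$ can only raise the lower limit, $\liminf_{y\to x,\,y\notin K}\rho(y)\geqslant\rho(x)>0$, which is precisely strict regularity.

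For the second implication I would use strict regularity locally and then patch. For each $x\in\bd K$ the positivity of the restricted lower limit produces a radius $\delta_x>0$, which I shrink if necessary so that $B(x,\delta_x)\subset\dom(f)$, such that $\rho>0$ on $B(x,\delta_x)\setminus K$ (the centre being automatically excluded since $x\in K$). Setting $U:=\Int K\cup\bigcup_{x\in\bd K}B(x,\delta_x)$ gives an open set satisfying $K=\Int K\cup\bd K\subset U\subset\dom(f)$, and since $\Int K\subset K$ one computes $U\setminus K=\bigcup_{x\in\bd K}\bigl(B(x,\delta_x)\setminus K\bigr)$, a set on which $\rho>0$, i.e. $0\notin\partial f$. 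Thus $K$ is regular.

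The routine covering argument of the last paragraph carries no real difficulty. The only genuine obstacle is the lower semicontinuity of $y\mapsto\dist(0,\partial f(y))$ used in the first implication: it runs against the apparent direction of the two definitions (strong regularity controls $\partial f$ only on $\bd K$, whereas strict regularity controls it at nearby exterior points), and it is exactly here that the compactness of the values of $\partial f$, together with its upper semicontinuity, must be invoked.
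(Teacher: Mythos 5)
Your proof is correct. The paper offers no argument here --- the corollary is asserted as ``clear'' --- so there is nothing to compare against, but your write-up supplies exactly the right details: the reduction to $\rho(y)=\dist(0,\partial f(y))$, the lower semicontinuity of $\rho$ deduced from the upper semicontinuity and compact (in fact, closed suffices for the equivalence $0\notin\partial f(y)\Leftrightarrow\rho(y)>0$) values of $\partial f$, and the covering construction of $U$ for the second implication are all sound. The one genuinely non-obvious step is indeed the lower semicontinuity argument, and you handle it properly.
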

\par It should be emphasized that the type of regularity of the set $K$ significantly depends on the function which represents this set. The ambiguity of this representation causes that the set $K$ can be (strongly, strictly) regular with respect to one function without being it simultaneously with regard to other. In order to present the concepts contained in Definition \ref{K}. we provide (referring to \cite{bader}) the following:
\begin{example}\label{ex}\mbox{}\par
\begin{itemize}
\item[$(1)$] Any convex closed subset $K\subset\R{N}$, represented by a distance function $d_k$, is {strict\-ly} regular.
\item[$(2)$] Generally speaking, any proximate ratract $K$ of $\R{N}$ is strictly regular, being represented by $d_K$. The set $K$ is a proximate retract if it possesses such a neighbourhood $U$, for which the metric projection $P_K\colon U\map K$ onto set $K$ is a univalent $($and automatically continuous$)$ mapping. In particular closed convex sets, being Chebyshev, belong to the class of proximate retracts.
\item[$(3)$]  Every epi-Lipschitz subset $K$ of $\R{N}$ is strongly regular, as a set represented by the function $\Delta_K=d_K-d_{K^c}$ $($see \cite{cornet}$)$. Let us recall that $K$ is said to be epi-Lipschitz, if $\Int C_K(x)\neq\emptyset$ for each $x\in\bd K$. On the other hand, if $0\not\in\partial f(x)$, then there is a vector $v\in\R{N}$ such that $f^\circ(x;v)<0$. Therefore, a strongly regular set $K$ $($represented by a function $f)$ has the property that $\Int\partial f(x)^\circ\neq\varnothing$ for $x\in\bd K$. It is also known that $\partial f(x)^\circ\subset C_K(x)$, as far as $0\not\in\partial f(x)$ $($\cite[Th.2.4.7]{clarke}$)$. Hence the conclusion that $\Int C_K(x)\neq\varnothing$ for $x$ from the boundary of strongly regular $K$. In summary, strongly regular sets are also epi-Lipschitz.
\item[$(4)$] One may indicate such a mapping $f$, in respect of which an orientable closed manifold of $C^1$-class is strictly regular.
\item[$(5)$] The set $K=\{(x,y)\in\R{2}\colon (x-1)^2+y^2=1\vee (x+1)^2+y^2=1\}$, represented by its distance function $d_K$, is admittedly not strictly regular but it is a regular set.
\end{itemize}
\end{example}
Other fundamental concepts taken from the viability theory are the Bouligand contingent cone to $K\subset\R{N}$ at $x\in K$
\[T_K(x)=\left\{v\in\R{N}\colon\liminf_{h\,\to\, 0^+}\frac{d_K(x+hv)}{h}=0\right\}\] and the Clarke tangent cone to $K$ at $x$, i.e.
\[C_K(x)=\left\{v\in\R{N}\colon\underset{y\,\xrightarrow[]{K}x,\,h\,\to\, 0^+}{\lim}\frac{d_K(y+hv)}{h}=0\right\},\] where $d_K=d(\cdot,K)$ is the distance function. The cones in question are linked by the following relationship $\partial d_K(x)^\circ=C_K(x)\subset T_K(x)$. If the set $K$ is tangentially regular at $x$ (for instance, when it has a geometry of a proximate retract), then the Bouligand cone is closed convex and $C_K(x)=T_K(x)$. Recall also that so-called regular normals forming the Bouligand normal cone $N_K^b(x)$ are derived from polar to the contingent cone, i.e. $N_K^b(x):=T_K(x)^\circ$. \par The extension formula, for continuous (set-valued) maps defined on a closed subset of a metric space, applied several times throughout the next section rests on the following notion (comp. \cite[Lemma 3.1]{bessaga}):
\begin{definition}
If $K$ is a closed subset of a metric space $(X,d)$, then an indexed family $\{(U_s,a_s)\}_{s\in S}$ such that, for all $s\in S$,
\begin{itemize}
\item[(i)] $U_s\subset X\setminus K$, $a_s\in\bd K$,
\item[(ii)] if $x\in U_s$, then $d(x,a_s)\<2d_K(x)$,
\item[(iii)] $\{U_s\}_{s\in S}$ is a locally finite open covering of $X\setminus K$
\end{itemize}
is called a Dugundji system for the complement $X\setminus K$.
\end{definition}
\par For reader's convenience we recall some significant results that we shall need in the course of our considerations. The first is the following coincidence point theorem.
\begin{theorem}\label{conth}\cite[Lemma 13.1.]{deimling} Let $X$ and $Z$ be real Banach spaces, $L\colon\dom L\subset X\to Z$ be Fredholm operator of index zero and with closed graph, $\Omega\subset X$ be open bounded and $N\colon\overline{\Omega}\map Z$ be such that  $QN$ and $K_{P,Q}N$ are compact usc multimaps with compact convex values. Assume also that
\begin{itemize}
\item[(a)] $Lx\not\in\lambdaup N(x)$ for all $\lambdaup\in(0,1)$ and $x\in\dom L\cap\partial\Omega$,
\item[(b)] $0\not\in QN(x)$ on $\ker L\cap\partial\Omega$ and $\deg(\Phi QN,\ker L\cap\Omega,0)\neq 0$.
\end{itemize}   
Then $Lx\in N(x)$ has a solution in $\overline{\Omega}$.
\end{theorem}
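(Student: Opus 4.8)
The plan is to translate the coincidence relation $Lx\in N(x)$ into a fixed point problem for a compact convex-valued multimap on $\overline\Omega$, and then to evaluate the corresponding degree from $\mathcal M$ by a homotopy that pushes the problem down onto the finite-dimensional kernel $\ker L$, where hypothesis (b) delivers a nonzero degree. First I would set up the usual Fredholm apparatus: since $L$ is Fredholm of index zero with closed graph, I select continuous linear projectors $P\colon X\to X$ and $Q\colon Z\to Z$ with $\im P=\ker L$ and $\ker Q=\im L$; index zero gives $\dim\im Q=\dim\coker L=\dim\ker L$, so there is a linear isomorphism $\Phi\colon\im Q\to\ker L$. The restriction of $L$ to $\dom L\cap\ker P$ is a bijection onto $\im L$ with bounded inverse $K_P$, and $K_{P,Q}:=K_P(Id-Q)$ obeys the standard identities $K_PL=Id-P$ on $\dom L$ and $LK_{P,Q}=Id-Q$ on $Z$. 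I then introduce the multimap
\[M(x):=Px+(\Phi Q+K_{P,Q})N(x),\qquad x\in\overline\Omega,\]
which is compact, usc and compact convex-valued by the assumptions on $QN$ and $K_{P,Q}N$ together with the finite rank of $P$ and $\Phi$; hence $(Id-M,\Omega,0)\in\mathcal M$ and $\deg(Id-M,\Omega,0)$ is defined.

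The backbone of the argument is the equivalence: for $x\in\dom L$ one has $Lx\in N(x)$ if and only if $x\in M(x)$. Indeed, if $Lx=y$ with $y\in N(x)$, then $Qy=QLx=0$, so $\Phi Qy=0$ and $K_{P,Q}y=K_Py=(Id-P)x$, whence $Px+\Phi Qy+K_{P,Q}y=x$. Conversely, applying $P$ to a fixed point relation forces $\Phi Qy=0$, so $Qy=0$, $y\in\im L$, and applying $L$ to $(Id-P)x=K_Py$ yields $Lx=y\in N(x)$. I would carry out this computation in full, since every subsequent boundary estimate reuses it.

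Finally I would deploy the admissible homotopy
\[H(\lambda,x):=Px+(\Phi Q+\lambda K_{P,Q})N(x),\qquad\lambda\in[0,1],\]
joining $H(1,\cdot)=M$ to $H(0,\cdot)=P+\Phi QN$. The essential point is that $H$ has no fixed point on $\partial\Omega$: a fixed point at a level $\lambda\in(0,1]$ gives, exactly as in the reduction, $\Phi Qy=0$ and $Lx=\lambda y\in\lambda N(x)$, which contradicts (a) when $\lambda\in(0,1)$ and already exhibits a solution when $\lambda=1$; a fixed point at $\lambda=0$ lies in $\ker L$ and yields $0\in QN(x)$, contradicting the first half of (b). Homotopy invariance then gives $\deg(Id-M,\Omega,0)=\deg(Id-H(0,\cdot),\Omega,0)$, and since $H(0,\cdot)$ has range in $\ker L$, the reduction property of the degree identifies this with $\deg(-\Phi QN,\ker L\cap\Omega,0)=(-1)^{\dim\ker L}\deg(\Phi QN,\ker L\cap\Omega,0)\neq 0$ by (b). The existence property of the degree then produces $x\in\Omega$ with $x\in M(x)$, i.e. a solution of $Lx\in N(x)$.

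I expect the main obstacle to be the bookkeeping in this last step: one must simultaneously verify that $H$ is an admissible compact convex-valued homotopy for the multivalued Leray--Schauder degree, that conditions (a) and (b) together seal off every escape of fixed points through $\partial\Omega$ uniformly in $\lambda$, and that the reduction property is applied correctly so as to land precisely on the finite-dimensional degree appearing in (b). The remaining verifications (admissibility of $M$, the projector identities, the finite-dimensional reduction) are standard once the Fredholm framework is fixed.
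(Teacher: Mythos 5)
The paper does not prove this statement at all: it is imported verbatim, with citation, as \cite[Lemma 13.1]{deimling}, so there is no in-paper proof to compare against. Your argument is the standard coincidence-degree proof of exactly this lemma --- the reduction $Lx\in N(x)\Leftrightarrow x\in M(x)$ with $M=P+(\Phi Q+K_{P,Q})N$, the homotopy $\lambda\mapsto P+(\Phi Q+\lambda K_{P,Q})N$, and the finite-dimensional reduction onto $\ker L$ --- and it is correct in structure; the sign $(-1)^{\dim\ker L}$ is harmless since only nonvanishing is used. The one point deserving an explicit word is the admissibility of $M$ for the degree on the class ${\mathcal M}$: your $M(x)$ is $\{Px+\Phi Qy+K_{P,Q}y\colon y\in N(x)\}$ with a \emph{common} selection $y$ (this is forced, since the Minkowski-sum variant $Px+\Phi QN(x)+K_{P,Q}N(x)$ breaks the converse implication in your equivalence), and closedness of these values plus upper semicontinuity of $M$ does not follow formally from $QN$ and $K_{P,Q}N$ being separately compact usc with compact values; one must argue via the joint map $y\mapsto(Qy,K_{P,Q}y)$ on $N(x)$, which is where the precise hypotheses of the cited source are used.
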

\par The succeeding property is commonly known as the convergence theorem for upper hemicontinuous maps with convex values.
\begin{theorem}\label{convergence}
Let $F\colon X\map E$ be an upper hemicontinuous map from a metric space $X$ to the closed convex subsets of a Banach space $E$. If $I$ is a finite interval of $\,\R{}$ and sequences $(x_n\colon I\to X)_\n$ and $(y_n\colon I\to E)_\n$ satisfy the following conditions
\begin{itemize}
\item[(i)] $(x_n)_\n$ converges a.e. to a function $x\colon I\to X$,
\item[(ii)] $(y_n)_\n$ converges weakly in the space $L^1(I,E)$ to a function $y\colon I\to E$,
\item[(iii)] $y_n(t)\in\overline{\co}B(F(B(x_n(t),\eps_n)),\eps_n)$ for a.a. $t\in I$, where $\eps_n\to 0^+$ as $n\to\infty$,
\end{itemize}
then $y(t)\in F(x(t))$ for a.a. $t\in I$.
\end{theorem}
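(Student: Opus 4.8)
The plan is to reduce the inclusion $y(t)\in F(x(t))$ to a family of scalar inequalities for support functions and then to pass to the limit in those inequalities, using Mazur's lemma to convert the weak convergence in (ii) into almost everywhere convergence and using the upper hemicontinuity of $F$ to control the right-hand side of (iii). For $p\in E^*$ write $\sigma(A,p):=\sup_{v\in A}\langle p,v\rangle$ for the support function of a set $A\subset E$. Since $F$ takes closed convex values, the separation theorem yields the equivalence $y(t)\in F(x(t))\Leftrightarrow\langle p,y(t)\rangle\<\sigma(F(x(t)),p)$ for every $p\in E^*$, and by definition upper hemicontinuity of $F$ means precisely that each map $X\ni z\mapsto\sigma(F(z),p)$ is upper semicontinuous. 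As a closed convex subset of $E$ is the intersection of the half-spaces determined by a countable total family of functionals, it suffices to establish the displayed inequality for $p$ ranging over one fixed countable family $D\subset E^*$, and to do so on a single subset of $I$ of full measure.

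First I would apply Mazur's lemma to the weakly convergent sequence $(y_n)$. It furnishes convex combinations of tails, $z_m=\sum_{n\geqslant m}\lambda_n^m y_n$ with finitely many nonzero coefficients $\lambda_n^m\geqslant 0$ summing to $1$, which converge to $y$ strongly in $L^1(I,E)$. Passing to a subsequence, $z_m(t)\to y(t)$ for almost every $t$. Intersecting the exceptional null set with the full-measure set coming from (i) and with the set where (iii) holds, I fix a set $I_0\subset I$ of full measure on which simultaneously $x_n(t)\to x(t)$, $z_m(t)\to y(t)$, and the inclusion (iii) is valid for all indices.

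Next, fix $p\in D$ and $t\in I_0$, and apply $\sigma(\cdot,p)$ to the right-hand side of (iii). Using the elementary identities $\sigma(\overline{\co}\,A,p)=\sigma(A,p)$, $\sigma(B(A,\eps_n),p)=\sigma(A,p)+\eps_n\|p\|$, and $\sigma(F(B(x_n(t),\eps_n)),p)=\sup_{z\in B(x_n(t),\eps_n)}\sigma(F(z),p)$, I obtain $\langle p,y_n(t)\rangle\<\sup_{z\in B(x_n(t),\eps_n)}\sigma(F(z),p)+\eps_n\|p\|$ for every $n$. Given $\delta>0$, the upper semicontinuity of $z\mapsto\sigma(F(z),p)$ at $x(t)$, combined with $x_n(t)\to x(t)$ and $\eps_n\to 0^+$, forces the balls $B(x_n(t),\eps_n)$ to lie eventually in a neighbourhood of $x(t)$ on which $\sigma(F(\cdot),p)<\sigma(F(x(t)),p)+\delta$; hence $\langle p,y_n(t)\rangle\<\sigma(F(x(t)),p)+\delta$ for all large $n$.

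Finally I would transfer this bound through the convex combinations: for $m$ large every index $n\geqslant m$ appearing in $z_m$ already satisfies the previous estimate, so $\langle p,z_m(t)\rangle=\sum_{n\geqslant m}\lambda_n^m\langle p,y_n(t)\rangle\<\sigma(F(x(t)),p)+\delta$; letting $m\to\infty$ and then $\delta\to 0^+$ gives $\langle p,y(t)\rangle\<\sigma(F(x(t)),p)$. Running $p$ over the countable family $D$ and intersecting the associated full-measure sets, the inequality holds for all $p\in D$ on one set of full measure, whence $y(t)\in F(x(t))$ almost everywhere by the separation characterization. I expect the main obstacle to be exactly the mismatch between the \emph{weak} convergence of $(y_n)$ and the \emph{pointwise} upper semicontinuity estimate: weak $L^1$-convergence transmits no pointwise information, and it is Mazur's lemma that bridges this gap by replacing $(y_n)$ with almost everywhere convergent convex combinations, a replacement that is harmless precisely because the values of $F$ are convex and are probed here only through their support functions.
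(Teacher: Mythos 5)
The paper states Theorem \ref{convergence} without proof --- it is quoted as the classical convergence theorem for upper hemicontinuous maps with convex values (it is essentially the Aubin--Cellina convergence theorem from \cite{aubin}) --- so there is no in-paper argument to compare yours against. Your proof follows the standard route for this result: Mazur's lemma to convert weak $L^1$-convergence into almost everywhere convergence of convex combinations of tails, then support-function estimates on condition (iii) combined with the upper semicontinuity of $z\mapsto\sigma(F(z),p)$, and it is essentially correct. All the individual estimates check out: $\sigma(\overline{\co}A,p)=\sigma(A,p)$, $\sigma(B(A,\eps),p)=\sigma(A,p)+\eps\|p\|$, the eventual inclusion $B(x_n(t),\eps_n)\subset B(x(t),r)$, and the transfer of the bound to the tail combinations $z_m$.

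The one step you should repair is the reduction to a fixed countable family $D\subset E^*$. The claim that a closed convex subset of $E$ is the intersection of the half-spaces determined by a countable \emph{total} family is false: in $\R{2}$ take $C=\{v\colon\langle p,v\rangle\<0\}$ with $p$ not a nonnegative multiple of any element of $D$; then $\sigma(C,q)=+\infty$ for every $q\in D$, so the corresponding intersection of half-spaces is all of $\R{2}$, not $C$. (A countable dense subset of the unit sphere does suffice when the values are compact and $E$ is finite-dimensional, as in the paper's applications, but not for general closed convex values.) Fortunately the reduction is unnecessary: your full-measure set $I_0$ is assembled from (i), (iii) and the a.e.\ convergence of the Mazur combinations, none of which involves $p$, and the subsequent estimates discard no further null sets. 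Hence $\langle p,y(t)\rangle\<\sigma(F(x(t)),p)$ holds for \emph{every} $p\in E^*$ and every $t\in I_0$, and the Hahn--Banach separation theorem, applied pointwise with a $t$-dependent functional, yields $y(t)\in F(x(t))$ on $I_0$. Deleting the countable family and concluding directly makes the proof both correct and shorter; as you rightly observe, the convexity and closedness of the values is exactly what makes probing $F(x(t))$ through support functions legitimate.
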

\par Another example of application of the coincidence degree theory is the following continuation theorem taken from \cite{gaines}. Throughout the rest of this paper $i\colon\R{N}\hookrightarrow C(I,\R{N})$ will denote a fixed embedding, given by $i(x_0)(t)=x_0$.
\begin{theorem}\label{maw}
Let $h\colon I\times\R{N}\to\R{N}$, $\gamma\colon C(I,\R{N})\to\R{N}$ be continuous mappings, with $\gamma$ taking bounded sets into bounded sets. Assume that the following conditions hold:
\begin{itemize}
\item[(i)] There exists an open bounded set $\Omega\subset C(I,\R{N})$ such that, for each $\lambdaup\in(0,1)$ and each possible solution $x$ to BVP
\[\begin{gathered}
\dot{x}(t)=\lambdaup\,h(t,x(t)),\;t\in I,\\
\gamma(x)=0,
\end{gathered}\]
one has $x\not\in\bd\Omega$.
\item[(ii)] Each possible solution $x_0\in\R{N}$ of the equation $(\gamma\circ i)(x_0)=0$ is such that $i(x_0)\not\in\bd\Omega$.
\item[(iii)] The Brouwer degree $\deg(\gamma\circ i,\w{\Omega},0)\neq 0$, where $\w{\Omega}:=\{x_0\in\R{N}\colon i(x_0)\in\Omega\}$.
\end{itemize}
Then the following boundary value problem
\[\begin{gathered}
\dot{x}(t)=h(t,x(t)),\;t\in I,\\
\gamma(x)=0,
\end{gathered}\]
has at least one solution $x\in\overline{\Omega}$.
\end{theorem}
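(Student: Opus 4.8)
The plan is to recast the boundary value problem as a coincidence equation $Lx=N(x)$ fitting the framework of Theorem \ref{conth}, and then to read off the abstract hypotheses (a)--(b) from (i)--(iii). I would work in $X=C(I,\R{N})$ and $Z=L^1(I,\R{N})\times\R{N}$, take $\dom L=AC(I,\R{N})$, and define the linear operator $L\colon\dom L\to Z$ by $Lx=(\dot x,0)$ together with $N\colon\overline{\Omega}\to Z$, $N(x)=(h(\cdot,x(\cdot)),-\gamma(x))$. The equation $Lx=N(x)$ then unwinds coordinatewise to $\dot x=h(\cdot,x(\cdot))$ in $L^1(I,\R{N})$ and $\gamma(x)=0$, so coincidence points of $(L,N)$ in $\overline{\Omega}$ are exactly the desired solutions; since $h$ is continuous such an $x$ is automatically of class $C^1$.

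Next I would verify the structural requirements. Because $\dot x=0$ forces $x$ to be constant, $\ker L$ is the $N$-dimensional space of constant functions, which we identify with $\R{N}$ via the embedding $i$; moreover $\im L=L^1(I,\R{N})\times\{0\}$ is closed with $\coker L\cong\R{N}$, so $L$ is Fredholm of index zero and has closed graph. I would choose the continuous projection $P\colon X\to X$, $Px\equiv x(0)$, onto $\ker L$, and $Q\colon Z\to Z$, $Q(w,v)=(0,v)$, whose range complements $\im L$ (indeed $\ker Q=\im L$). A direct computation then gives $QN(x)=(0,-\gamma(x))$ and $K_{P,Q}N(x)(t)=\int_0^t h(s,x(s))\,ds$. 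Over the bounded set $\overline{\Omega}$ the functions $x(\cdot)$ are uniformly bounded, whence $h(\cdot,x(\cdot))$ is uniformly bounded by continuity of $h$, and the Arzel\`a--Ascoli theorem yields relative compactness of $K_{P,Q}N(\overline{\Omega})$; likewise $QN(\overline{\Omega})$ is bounded in the finite-dimensional range of $Q$ because $\gamma$ carries bounded sets to bounded sets. Both are single-valued and continuous, hence usc with compact convex (singleton) values, as Theorem \ref{conth} demands.

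It remains to match the three hypotheses. For $\lambdaup\in(0,1)$ the equation $Lx=\lambdaup N(x)$ reads $\dot x=\lambdaup h(\cdot,x(\cdot))$ together with $-\lambdaup\gamma(x)=0$, i.e. $\gamma(x)=0$; thus the solutions of $Lx=\lambdaup N(x)$ lying in $\dom L\cap\partial\Omega$ are precisely the solutions of the $\lambdaup$-parametrized problem in (i), which are excluded from $\bd\Omega$, giving condition (a). On $\ker L\cap\partial\Omega$ one has $QN(i(x_0))=(0,-(\gamma\circ i)(x_0))$, which vanishes exactly when $(\gamma\circ i)(x_0)=0$; by (ii) no such $x_0$ yields a point of $\bd\Omega$, so $0\notin QN(\ker L\cap\partial\Omega)$. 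Finally, under the identifications $\ker L\cong\R{N}\cong\im Q$ the set $\ker L\cap\Omega$ becomes $\w{\Omega}$ and $\Phi QN$ restricted to $\ker L$ becomes, up to the isomorphism $\Phi$, the field $x_0\mapsto-(\gamma\circ i)(x_0)$, whose Brouwer degree on $\w{\Omega}$ differs from $\deg(\gamma\circ i,\w{\Omega},0)$ only by the factor $(-1)^N$ and is therefore nonzero by (iii). Hence condition (b) holds and Theorem \ref{conth} produces a coincidence point $x\in\overline{\Omega}$, which is the sought solution.

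The main obstacle is the bookkeeping in the last step: one must check that the identification of $\ker L$ with $\R{N}$ through $i$, and of $\im Q$ with $\R{N}$, is compatible with the isomorphism $\Phi$ entering condition (b), so that the abstract degree $\deg(\Phi QN,\ker L\cap\Omega,0)$ is genuinely a nonzero multiple of $\deg(\gamma\circ i,\w{\Omega},0)$; only the nonvanishing matters, the sign being immaterial. Everything else — the Fredholm property of $L$, the projector formulas for $P$, $Q$, $K_{P,Q}$, and the compactness of $QN$ and $K_{P,Q}N$ — is routine once $X$ and $Z$ are fixed as above.
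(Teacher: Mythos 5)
Your argument is correct. Note, however, that the paper offers no proof of this statement: it is quoted verbatim as a known continuation theorem from \cite{gaines}, so there is nothing in the text to compare against. Your reduction to Theorem \ref{conth} — with $Z=L^1(I,\R{N})\times\R{N}$, $Lx=(\dot x,0)$, $N(x)=(h(\cdot,x(\cdot)),-\gamma(x))$, $P=\ev_0$, $Q(w,v)=(0,v)$ — is exactly the standard coincidence-degree route (and the one used elsewhere in the paper, e.g.\ in the proofs of Theorems \ref{borsuk1} and \ref{floqth}); the identification of $\ker L\cap\Omega$ with $\w{\Omega}$ via $i$ and the sign factor $(-1)^N$ in the degree are handled correctly, and the Fredholm, closed-graph and compactness verifications are all sound.
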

The subsequent result is a straightforward consequence of a Lefschetz-type fixed point theorem \cite[Th.41.7]{gorn}.
\begin{theorem}\label{homo}
Let $X$ be an ANR and let $\psi\colon [0,1]\times X\map X$ be an admissible map such that $\psi(0,x)=\{x\}$ on $X$. If X is compact and $\chi(X)\neq 0$, then $\psi(1,\cdot)$ has a fixed point.
\end{theorem}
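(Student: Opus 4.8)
The plan is to derive this directly from the Lefschetz fixed point theorem for admissible self-maps of compact ANRs quoted as \cite[Th.41.7]{gorn}, with the hypothesis $\psi(0,\cdot)=\mathrm{id}_X$ serving as the normalization that pins the relevant Lefschetz number down to $\chi(X)$.

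First I would record that, $X$ being a compact ANR, it is of finite type, so for every admissible self-map $\varphi\colon X\map X$ arising from a Vietoris decomposition $\varphi=q\circ p^{-1}$ the graded endomorphism $\varphi_*:=q_*\circ p_*^{-1}$ of $H_*(X)$ is well defined (the Vietoris map $p$ induces an isomorphism $p_*$ on rational homology) and is a Leray endomorphism, so that the Lefschetz number $\Lambda(\varphi_*)=\sum_{q\geqslant 0}(-1)^q\tr(\varphi_{q*})$ is a well-defined integer. This is precisely the setting in which \cite[Th.41.7]{gorn} guarantees that $\Lambda(\varphi_*)\neq 0$ forces $\fix(\varphi)\neq\varnothing$.

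The crux is then to evaluate the Lefschetz number of $\psi(1,\cdot)$ by exploiting the homotopy. By admissibility of $\psi$ there is a metric space $Z$ and continuous maps $p\colon[0,1]\times X\to Z$ (Vietoris) and $q\colon Z\to X$ with $\psi=q\circ p^{-1}$. Precomposing $p$ and $q$ with the endpoint inclusions $j_s\colon X\to\{s\}\times X\hookrightarrow[0,1]\times X$ ($s=0,1$) produces Vietoris decompositions of $\psi(0,\cdot)$ and $\psi(1,\cdot)$. Since $j_0$ and $j_1$ are homotopic in $[0,1]\times X$, they induce the same homomorphism on homology, whence these two decompositions yield the \emph{same} endomorphism of $H_*(X)$ and hence equal Lefschetz numbers. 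But $\psi(0,\cdot)=\mathrm{id}_X$, whose induced endomorphism is the identity on each $H_q(X)$, so
\[\sum_{q\geqslant 0}(-1)^q\tr(\mathrm{id}_{H_q(X)})=\sum_{q\geqslant 0}(-1)^q\dim H_q(X)=\chi(X).\]
Therefore the Lefschetz number attached to this particular decomposition of $\psi(1,\cdot)$ equals $\chi(X)\neq 0$, and \cite[Th.41.7]{gorn} delivers a fixed point of $\psi(1,\cdot)$.

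The only genuinely delicate point I anticipate is the homotopy-invariance step. The homology homomorphism induced by an admissible map is not canonical but depends on the chosen Vietoris pair, so one must be careful to exhibit \emph{compatible} decompositions at the two endpoints of the homotopy (as above, via the homotopic inclusions $j_0,j_1$) rather than merely invoking the slogan that homotopic maps have equal Lefschetz numbers. Once this compatibility is recorded, the identification of the Lefschetz number of the identity with $\chi(X)$ and the final appeal to \cite[Th.41.7]{gorn} are routine bookkeeping.
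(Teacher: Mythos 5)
Your proposal is correct and follows exactly the route the paper intends: the paper offers no written proof beyond the remark that the statement is ``a straightforward consequence of a Lefschetz-type fixed point theorem \cite[Th.41.7]{gorn}'', and your argument is precisely the standard derivation of that consequence (compatible Vietoris decompositions at the two ends of the homotopy, identification of the Lefschetz number of $\psi(1,\cdot)$ with $\Lambda(id_{H_*(X)})=\chi(X)\neq 0$). Your care about choosing compatible decompositions rather than invoking homotopy invariance as a slogan is exactly the right point to flag, and the details check out.
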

In what follows we shall permanently refer to a certain initial set of assumptions regarding the set-valued map $F\colon I\times K\map\R{N}$, which we itemize below:
\begin{itemize}
\item[$(\F_1)$] the value $F(t,x)\subset\R{N}$ is nonempty compact and convex for each $(t,x)\in I\times K$,
\item[$(\F_2)$] the multimap $I\ni t\mapsto F(t,x)$ possesses a measurable selection for all $x\in K$,
\item[$(\F_3)$] the multimap $K\ni x\mapsto F(t,x)$ is upper semicontinuous for a.a. $t\in I$,
\item[$(\F_4)$] $F$ is bounded, i.e. there exists $c>0$ such that for a.a. $t\in I$ and all $x\in K$, $\sup\limits_{y\in F(t,x)}|y|\<c$.
\end{itemize}
By a solution to problem \eqref{inclusion} we mean an absolutely continuous function $x\colon I\to\R{N}$ such that $x(0)=g(x)$, $x(I)\subset K$ and $\dot{x}(t)\in F(t,x(t))$ for a.a. $t\in I$.

\section{Existence theorems for nonlocal Cauchy problems}
The first assertion illustrates the relationship between the existence of fixed points of the Poincar\'e-like operator associated with nonlocal Cauchy problem \eqref{inclusion} and the presumption that the set of state constraints $K$ is homotopy dominated by the space $C(I,K)$ through the boundary operator $g$. The symbol ``$\simeq$'' denotes the relation of being homotopic.
\begin{theorem}\label{th9}
Assume that $K$, represented by $f$, is compact strictly regular with nontrivial Euler characteristic $\chi(K)\neq 0$. Suppose that $F\colon I\times K\map\R{N}$ satisfies conditions $(\F_1)$-$(\F_4)$ and 
\begin{equation}\label{f}
\forall\,x\in\bd K\text{ the multimap }F(\cdot,x)\cap\partial f(x)^\circ\text{ possesses a measurable selection.}
\end{equation}
Let $g\colon C(I,\R{N})\to\R{N}$ be a continuous mapping such that $g(C(I,K))\subset K$ and $g\circ\res{i}{K}\simeq id_K$. Then the nonlocal initial value problem \eqref{inclusion} possesses at least one solution.
\end{theorem}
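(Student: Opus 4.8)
The plan is to recast the nonlocal problem \eqref{inclusion} as a fixed point problem for an admissible self-map of $K$ and to solve it via the Lefschetz-type Theorem \ref{homo}. For $\lambda\in[0,1]$ and $\xi\in K$, introduce the constrained solution set
\[
S_\lambda(\xi):=\left\{x\in AC(I,\R{N})\colon x(0)=\xi,\ x(I)\subset K,\ \dot x(t)\in\lambda F(t,x(t))\text{ a.e. on }I\right\}\subset C(I,K).
\]
At the endpoints $S_0(\xi)=\{i(\xi)\}$ is the constant trajectory, while $S_1(\xi)$ is the genuine viable solution set with initial value $\xi$. The point of this definition is the reduction: a fixed point $\xi_*\in g(S_1(\xi_*))$ immediately yields a solution of \eqref{inclusion}, for choosing $x\in S_1(\xi_*)$ with $g(x)=\xi_*$ gives $x(0)=\xi_*=g(x)$, and $x$ is by construction a viable trajectory of $x'\in F(t,x)$.

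First I would establish that $S_\lambda(\xi)\neq\varnothing$ for every $(\lambda,\xi)$. Since $\partial f(x)^\circ$ is a cone, the tangency hypothesis \eqref{f} is inherited by $\lambda F$: a measurable selection of $F(\cdot,x)\cap\partial f(x)^\circ$ produces one of $\lambda F(\cdot,x)\cap\partial f(x)^\circ$ for every $\lambda\geqslant 0$ and $x\in\bd K$. Together with $(\F_1)$-$(\F_4)$ and strict regularity of $K$ this is exactly the tangential setting under which viable solutions exist; here one invokes the viability/structure machinery (approximation of $F$, the convergence Theorem \ref{convergence}, and the representation of $K$ through $f$) to conclude not only $S_\lambda(\xi)\neq\varnothing$ but that each $S_\lambda(\xi)$ is an $R_\delta$ (hence acyclic) set and that $(\lambda,\xi)\mapsto S_\lambda(\xi)$ is upper semicontinuous with compact values. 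As $g$ is continuous and admissibility is preserved under composition with continuous maps, $(\lambda,\xi)\mapsto g(S_\lambda(\xi))\subset K$ is an admissible map into $K$; note that at $\lambda=0$ it reduces to the univalent map $\xi\mapsto g(i(\xi))=(g\circ\res{i}{K})(\xi)$.

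Next I would assemble the homotopy required by Theorem \ref{homo}. The hypothesis $g\circ\res{i}{K}\simeq id_K$ furnishes a continuous homotopy $H\colon[0,1]\times K\to K$ with $H(0,\cdot)=id_K$ and $H(1,\cdot)=g\circ\res{i}{K}$. Splice it with the scaling homotopy by setting
\[
\psi(s,\xi):=\begin{cases}\{H(2s,\xi)\},&s\in[0,\tfrac12],\\[1mm] g\bigl(S_{2s-1}(\xi)\bigr),&s\in[\tfrac12,1].\end{cases}
\]
The two branches agree at $s=\tfrac12$, where both equal $\{g(i(\xi))\}$, so $\psi\colon[0,1]\times K\map K$ is a well-defined admissible homotopy (the single-valued branch and the composite acyclic branch glue across the overlap) with $\psi(0,\cdot)=id_K$ and $\psi(1,\cdot)=g\circ S_1$. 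Since $K$ is a compact ANR---which its regularity secures and which is already implicit in $\chi(K)$ being well defined---with $\chi(K)\neq 0$, Theorem \ref{homo} produces a fixed point $\xi_*\in\psi(1,\xi_*)=g(S_1(\xi_*))$, and the reduction of the first paragraph turns it into a solution of \eqref{inclusion}.

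The crux is the second step: proving that the constrained inclusion $x'\in\lambda F(t,x)$, $x(0)=\xi$, $x(I)\subset K$ admits viable solutions forming an $R_\delta$ set, under the \emph{polar-cone} tangency condition \eqref{f}. In contrast to the classical Nagumo condition phrased through the contingent cone $T_K(x)$, here the admissible velocities are confined to the smaller set $\partial f(x)^\circ=\{v\colon f^\circ(x;v)\<0\}$. Strict regularity of $K$ is precisely what compensates: the uniform lower bound $\liminf_{y\to x,\,y\notin K}\inf_{z\in\partial f(y)}|z|>0$ on the exterior gradients near $\bd K$ forces $f$ to increase at a definite rate off $K$, so that Euler-type approximate trajectories built from measurable selections of $\lambda F(\cdot,x)\cap\partial f(x)^\circ$ cannot drift out of $K$ in the limit. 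Establishing this non-escape behaviour, together with the upper semicontinuity of $(\lambda,\xi)\mapsto S_\lambda(\xi)$ down to $\lambda=0$, is the principal technical obstacle; the fixed point reduction that follows is then purely formal.
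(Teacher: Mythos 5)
Your proposal is correct in its overall architecture and reaches the same endgame as the paper (an admissible homotopy from $id_K$ to the Poincar\'e-like operator $g\circ S_F$, followed by Theorem \ref{homo} on the compact ANR $K$ with $\chi(K)\neq 0$), but the homotopy is built in a genuinely different way. You deform the \emph{equation}: the scaling $\lambda F$ degenerates the solution map to the constant-trajectory map $\xi\mapsto\{i(\xi)\}$ at $\lambda=0$, and you then splice this with the given homotopy $g\circ\res{i}{K}\simeq id_K$. The paper instead keeps the single solution map $S_F$ fixed and deforms the \emph{output}: it uses the stopping operator $\pi(t,x)(s)=x(\min\{s,t\})$ inside $C(I,K)$ to produce a homotopy $g\circ\res{i}{K}\circ\ev_0\simeq g$, concatenates with the hypothesis to get a continuous $\ell\colon[0,1]\times C(I,K)\to K$ joining $\ev_0$ to $g$, and sets $\psi=\ell\circ H$ with $H(\lambda,x_0)=\{\lambda\}\times S_F(x_0)$; then $\psi(0,\cdot)=\ev_0\circ S_F=id_K$ for free. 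The paper's route only needs the Bader--Kryszewski structure theorem once, for the fixed map $F$ (nonempty compact $R_\delta$ values of $S_F$, usc), and the admissibility of $\psi$ is immediate as a composition of an acyclic map with a continuous one. Your route buys a perhaps more transparent geometric picture, but at the cost of a strictly stronger input: you must verify that $S_\lambda(\xi)$ is nonempty with $R_\delta$ values for \emph{every} $\lambda\in[0,1]$ (this part is fine, since the polar cone $\partial f(x)^\circ$ is a cone and the tangency condition \eqref{f} is scale-invariant, so the cited structure result applies to each $\lambda F$), and---this is the real extra work you correctly flag but do not carry out---that $(\lambda,\xi)\mapsto S_\lambda(\xi)$ is jointly upper semicontinuous down to $\lambda=0$, so that the graph over $[\tfrac12,1]\times K$ is compact and the gluing with the single-valued branch yields a Vietoris pair. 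That joint usc is provable by the standard Arzel\`a--Ascoli plus Theorem \ref{convergence} argument, so there is no fatal gap, but the paper's stopping-time homotopy sidesteps the entire parametrized viability question.
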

\begin{proof}
Define so-called solution set map $S_F\colon K\map C(I,K)$, associated with the Cauchy problem 
\begin{equation}\label{cauchy}
\begin{gathered}
x'\in F(t,x),\;t\in I,x\in K\\
x(0)=x_0\in K,
\end{gathered}
\end{equation}
by the formula \[S_F(x_0):=\left\{x\in C(I,K)\colon x\mbox{ is a solution of }\eqref{cauchy}\right\}.\] In view of \cite[Cor.1.12.]{bader} $S_{\!F}$ is a set-valued map with compact $R_\delta$ values. Therefore, it is an acyclic multimap. Now we are able to introduce the Poincar\'e-like operator $P\colon K\map K$ related to problem \eqref{inclusion}, given by $P=g\circ S_F$. From what we have established so far it follows that $P$ is an admissible map.\par Observe that $g\circ\res{i}{K}\circ ev_0\simeq g$ and the joining homotopy $h\colon[0,1]\times C(I,K)\to K$ is given by $h(\lambdaup,x):=g(\pi(\lambdaup T,x))$, where $\pi\colon I\times C(I,K)\to C(I,K)$ is such that
\[\pi(t,x)(s):=
\begin{cases}
x(s),&s\in[0,t]\\
x(t),&s\in[t,T].
\end{cases}\]
By the assumption, we see that $g\circ\res{i}{K}\circ ev_0\simeq id_K\circ ev_0=ev_0$. This means that there is a homotopy $\ell\colon[0,1]\times C(I,K)\to K$ joining the evaluation $ev_0$ with the boundary operator $g$. Let $H\colon[0,1]\times K\map[0,1]\times C(I,K)$ be an acyclic multimap given by $H(\lambdaup,x_0):=\{\lambdaup\}\times S_{\!F}(x_0)$. Now, let us define an admissible homotopy $\psi\colon[0,1]\times K\map K$ by the following formula $\psi:=\ell\circ H$. Since $K$ is an ANR (\cite[Prop.3.1.]{bader}) and, of course, $\psi(0,x)=x$ on $K$, Theorem \ref{homo}. implies that there is $x_0\in K$ such that $x_0\in\psi(1,x_0)=P(x_0)$. This point corresponds to the solution $x\in C(I,K)$ of the nonlocal Cauchy problem \eqref{inclusion} in such a way that $x(0)=x_0=g(x)$.
\end{proof}
\begin{remark}
Suppose all the assumptions of Theorem \ref{th9}. are satisfied. Then the set $S_{\!F}(g)$ of solutions to the problem \eqref{inclusion} is nonempty compact.
\end{remark}
\begin{proof}
Retaining the notation of the proof of Theorem \ref{th9}., observe that $\overline{\fix(P)}=\fix(P)\subset K$. Thus, $\fix(P)$ is compact. The set-valued map $\Psi\colon\fix(P)\map C(I,K)$, given by the formula \[\Psi(x_0):=S_F(x_0)\cap g^{-1}(\{x_0\}),\] is a compact valued upper semicontinuous map. Since $S_{\!F}(g)$ coincides with $\Psi(\fix(P))$, the solution set $S_{\!F}(g)$ must be compact.
\end{proof}
Writing that the set K is symmetric we mean that it is symmetric with respect to origin, i.e. $K=-K$.
\begin{corollary}\label{contractible}
Assume that $K$, represented by $f$, is symmetric compact contractible and strictly regular. Suppose that $F\colon I\times K\map\R{N}$ satisfies conditions $(\F_1)$-$(\F_4)$ and \eqref{f}. Then the antiperiodic problem 
\begin{equation}\label{anti}
\begin{gathered}
x'\in F(t,x),\;t\in I,x\in K\\
x(0)=-x(T)
\end{gathered}
\end{equation}
has at least one solution.
\end{corollary}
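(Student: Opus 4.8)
This statement is a corollary of Theorem \ref{th9}, and the plan is to deduce it by specializing the boundary operator to the antiperiodic one, $g\colon C(I,\R{N})\to\R{N}$, $g(x):=-x(T)$. The task then reduces to checking that $K$ and this particular $g$ fulfil every hypothesis of Theorem \ref{th9}. The conditions $(\F_1)$-$(\F_4)$ and \eqref{f} on $F$, and the compactness together with strict regularity of $K$, are assumed outright and carry over verbatim; so I would only need to establish the homological nondegeneracy $\chi(K)\neq 0$ and the three properties of $g$ that Theorem \ref{th9} demands: continuity, $g(C(I,K))\subset K$, and $g\circ\res{i}{K}\simeq id_K$.

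The Euler-characteristic condition and the first two properties of $g$ are immediate. Since $K$ is contractible it has the rational homology of a point, so $H_0(K)\cong\mathbb{Q}$ and $H_q(K)=0$ for $q\geqslant 1$, whence $\chi(K)=1\neq 0$. Continuity of $g$ is clear once it is written as the evaluation $x\mapsto x(T)$ followed by the linear antipodal map. For the inclusion I would invoke symmetry directly: if $x\in C(I,K)$ then $x(T)\in K$, and therefore $g(x)=-x(T)\in -K=K$, so $g(C(I,K))\subset K$.

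The heart of the argument is the homotopy relation. Testing $g\circ\res{i}{K}$ on a constant function, for $x_0\in K$ I get $(g\circ\res{i}{K})(x_0)=g(i(x_0))=-i(x_0)(T)=-x_0$, so that $g\circ\res{i}{K}$ is precisely the antipodal self-map $-id_K$ of $K$ --- a genuine map into $K$ exactly because $K=-K$. To produce $-id_K\simeq id_K$ I would exploit contractibility twice. Fixing a contraction $r\colon[0,1]\times K\to K$ with $r(0,\cdot)=id_K$ and $r(1,\cdot)\equiv x_\ast$, the map $r$ itself joins $id_K$ to the constant $x_\ast$, while $(s,x_0)\mapsto r(s,-x_0)$ --- which lands in $K$ precisely by symmetry --- joins $-id_K$ to the same constant; concatenating gives a homotopy $-id_K\simeq id_K$ inside $K$. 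With all hypotheses verified, Theorem \ref{th9} produces an absolutely continuous $x\colon I\to\R{N}$ with $x(I)\subset K$, $\dot{x}(t)\in F(t,x(t))$ for a.a. $t\in I$, and $x(0)=g(x)=-x(T)$, i.e. a solution of \eqref{anti}. I do not expect a serious obstacle: this is a reduction, and the only delicate point is keeping the homotopy between the antipodal map and the identity valued in $K$, which is exactly what the pairing of contractibility with symmetry guarantees.
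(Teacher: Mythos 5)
Your proposal is correct and follows exactly the paper's route: reduce to Theorem \ref{th9} with $g=-\ev_T$, note $\chi(K)=1$ by contractibility, identify $g\circ\res{i}{K}$ with $-id_K$, and obtain $-id_K\simeq id_K$ from symmetry and contractibility. You merely spell out in more detail the concatenated homotopy through the constant map that the paper leaves implicit.
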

\begin{proof}
Notice that $\chi(K)=1$. In order to apply Theorem \ref{th9}. it suffices to show that $(-ev_T)\circ\res{i}{K}\simeq id_K$. However, $(-ev_T)\circ\res{i}{K}=-id_K$. On the other hand $id_K\simeq -id_K$, because $K$ is symmetric and contractible.
\end{proof}
\begin{corollary}
Assume that $K$, represented by $f$, is compact and convex. Let $0<t_1<t_2<\ldots<t_n\<T$ be arbitrary, but fixed, $\alpha_i\geqslant 0$ and $\sum_{i=1}^n\alpha_i=1$. Suppose that $F\colon I\times K\map\R{N}$ satisfies conditions $(\F_1)$-$(\F_4)$ and \eqref{f}.  Then the following nonlocal initial value problem with multi-point discrete mean condition
\begin{equation}\label{multi2}
\begin{gathered}
x'\in F(t,x),\;t\in I,x\in K\\
x(0)=\sum\limits_{i=1}^n\alpha_ix(t_i)
\end{gathered}
\end{equation}
has at least one solution.
\end{corollary}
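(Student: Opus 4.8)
The plan is to reduce everything to Theorem \ref{th9} by verifying its three groups of hypotheses for the boundary operator $g(x)=\sum_{i=1}^n\alpha_i x(t_i)$. The conditions on $F$ — namely $(\F_1)$-$(\F_4)$ together with the tangency requirement \eqref{f} — are imposed directly, so nothing needs to be checked there. For the geometry of $K$ I would argue that a compact convex set is automatically strictly regular: representing $K$ by its distance function $d_K$, this is exactly Example \ref{ex}$(1)$. Moreover, a convex set is contractible, whence its Euler characteristic equals $\chi(K)=1\neq0$. Thus $K$ meets the standing geometric requirements of Theorem \ref{th9}.

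Next I would address the boundary operator itself. Continuity of $g$ is immediate, since $g$ is a finite combination of the continuous evaluation functionals $x\mapsto x(t_i)$. To see that $g(C(I,K))\subset K$, take any $x\in C(I,K)$; then each $x(t_i)$ lies in $K$, and because $\alpha_i\geqslant0$ with $\sum_{i=1}^n\alpha_i=1$, the value $g(x)=\sum_{i=1}^n\alpha_i x(t_i)$ is a convex combination of points of $K$, hence belongs to $K$ by convexity.

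It remains to establish the homotopy condition $g\circ\res{i}{K}\simeq id_K$, and here the verification collapses to an identity rather than a genuine homotopy. For $x_0\in K$ the embedded element $i(x_0)$ is the constant function with value $x_0$, so $i(x_0)(t_i)=x_0$ for every $i$, and therefore $g(i(x_0))=\sum_{i=1}^n\alpha_i x_0=x_0$. Consequently $g\circ\res{i}{K}=id_K$, which is trivially homotopic to $id_K$. With all hypotheses confirmed, Theorem \ref{th9} applies and furnishes a solution of \eqref{multi2}. The only point requiring any care — and thus the closest thing to an obstacle — is recognising that the normalisation constraints $\alpha_i\geqslant0$ and $\sum_{i=1}^n\alpha_i=1$ are precisely what simultaneously guarantee the inclusion $g(C(I,K))\subset K$ and the identity $g(i(x_0))=x_0$; everything else is a direct invocation of Theorem \ref{th9}.
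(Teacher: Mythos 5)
Your proposal is correct and follows exactly the paper's argument: it verifies the hypotheses of Theorem \ref{th9} by noting that a compact convex set is strictly regular with $\chi(K)=1$, that convexity gives $g(C(I,K))\subset K$, and that $g\circ\res{i}{K}=id_K$ on constant functions. The paper's own proof is just a terser version of the same reasoning.
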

\begin{proof}
The constraint set $K$ is strictly regular with nontrivial characteristic. Let $g:=\sum\limits_{i=1}^n\alpha_iev_{t_i}$. Then $g(C(I,K))\subset K$ and $g\circ\res{i}{K}=id_K$.
\end{proof}
\begin{corollary}
Let $h\colon\R{N}\to\R{N}$ be a continuous mapping such that $|h(x)|\<L|x|$ for some $L>0$ and every $x\in\R{N}$. Assume that $K$, represented by $f$, is a compact convex set, invariant under the mapping $h$. Let $F\colon I\times K\map\R{N}$ satisfy conditions $(\F_1)$-$(\F_4)$ and \eqref{f}. Then the following nonlocal Cauchy problem with mean value condition
\begin{equation}\label{mean2}
\begin{gathered}
x'\in F(t,x),\;t\in I,x\in K\\
x(0)=\frac{1}{T}\int_0^Th(x(t))\,dt
\end{gathered}
\end{equation}
has at least one solution.
\end{corollary}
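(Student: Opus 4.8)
The plan is to obtain the conclusion from Theorem~\ref{th9} by choosing as nonlocal boundary operator the mean-value map
\[
g\colon C(I,\R{N})\to\R{N},\qquad g(x):=\frac{1}{T}\int_0^Th(x(t))\,dt.
\]
Since $K$ is compact and convex it is, being represented by its distance function, strictly regular by Example~\ref{ex}$(1)$; moreover $K$ is contractible, whence $\chi(K)=1\neq 0$. Thus the geometric hypotheses of Theorem~\ref{th9}, together with $(\F_1)$--$(\F_4)$ and \eqref{f} (which are assumed outright), are already in place, and I only have to verify the three requirements imposed on $g$.

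First I would check that $g$ is well defined and continuous on all of $C(I,\R{N})$ and that $g(C(I,K))\subset K$. The growth estimate $|h(x)|\<L|x|$ makes $g$ carry bounded sets into bounded sets and, since $h$ is continuous, uniform convergence $x_n\to x$ gives uniform convergence $h(x_n(\cdot))\to h(x(\cdot))$ on the relevant bounded set, hence $g(x_n)\to g(x)$. For the inclusion, fix $x\in C(I,K)$; invariance $h(K)\subset K$ yields $h(x(t))\in K$ for every $t\in I$, so $g(x)$ is the integral mean of a continuous $K$-valued map. Because $K$ is compact and convex, this mean remains in $K$: if not, a supporting hyperplane $\langle p,\cdot\rangle\<\alpha$ for $K$ (Hahn--Banach) would separate $g(x)$ from $K$, yet $\langle p,g(x)\rangle=\frac{1}{T}\int_0^T\langle p,h(x(t))\rangle\,dt\<\alpha$, a contradiction.

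Next I would compute $g$ on constant trajectories. For $x_0\in K$ one has
\[
(g\circ\res{i}{K})(x_0)=\frac{1}{T}\int_0^Th(x_0)\,dt=h(x_0),
\]
so $g\circ\res{i}{K}=\res{h}{K}$, the restriction of $h$ to $K$. It then remains to produce the homotopy $\res{h}{K}\simeq id_K$ demanded by Theorem~\ref{th9}, and here convexity does all the work: the straight-line homotopy
\[
H\colon[0,1]\times K\to K,\qquad H(\lambdaup,x):=(1-\lambdaup)\,x+\lambdaup\,h(x)
\]
is continuous, satisfies $H(0,\cdot)=id_K$ and $H(1,\cdot)=\res{h}{K}$, and takes values in $K$ since for each $x\in K$ both $x$ and $h(x)$ lie in $K$ and $K$ is convex. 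Consequently $g\circ\res{i}{K}\simeq id_K$, all hypotheses of Theorem~\ref{th9} hold, and problem \eqref{mean2} has a solution.

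I expect the only genuinely substantive point to be the inclusion $g(C(I,K))\subset K$, i.e.\ the fact that the integral mean of a $K$-valued continuous function stays inside the compact convex set $K$; the remaining steps are essentially a computation of $g$ on constants and an appeal to the convexity of $K$ for the required homotopy.
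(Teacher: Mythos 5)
Your proposal is correct and follows essentially the same route as the paper: the same operator $g(x)=\frac{1}{T}\int_0^Th(x(t))\,dt$, the same observation that the integral mean of a $K$-valued map lies in $\overline{\co}\,h(K)\subset K$, the identity $g\circ\res{i}{K}=\res{h}{K}$, and the straight-line homotopy to $id_K$ afforded by convexity and invariance. The only difference is that you spell out the separation argument and the explicit homotopy formula where the paper merely cites the mean value theorem and convexity.
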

\begin{proof}
Define a continuous operator $g\colon C(I,\R{N})\to\R{N}$, by $g(x):=(1/T)\int_0^Th(x(t))\,dt$. The mean value theorem ensures that $g(x)\in\overline{\co}\,h(K)\subset\overline{\co}K=K$ for $x\in C(I,K)$. Observe that $g\circ\res{i}{K}=\res{h}{K}$. Since $K$ is convex, one sees that $\res{h}{K}\simeq id_K$. Eventually, $g\circ\res{i}{K}\simeq id_K$.
\end{proof}
Indication of the appropriate open, relatively compact subset $\Omega$, which is necessary for the application of continuation theorem (Theorem \ref{maw}.), will become possible by the use of the following notion (formulated below geometrical conditions generalize in a natural way those contained in \cite[Def.3.2.]{gaines}).
\begin{definition}\label{boundset}
An open bounded subset $K\subset\R{N}$ will be called an $($autonomous$)$ bound set relative to the equation $\dot{x}(t)=h(t,x(t))$ on $[0,T]$ if for any $x_0\in\bd K$ there exists $f(\cdot;x_0)=f(\cdot)$ such that
\begin{itemize}
\item[(i)] $f\colon\dom(f)\subset\R{N}\to\R{}$ is locally Lipschitz,
\item[(ii)] $f(x_0)=0$,
\item[(iii)] $\overline{K}\subset\{x\in\dom(f)\colon f(x)\<0\}$,
\item[(iv)] $\forall\,t\in(0,T)\;\forall\,p\in\partial f(x_0)\;\;\;\langle p,h(t,x_0)\rangle\neq 0.$
\end{itemize}
\end{definition}
The interest of Definition \ref{boundset}. follows from the subsequent observation.
\begin{proposition}\label{boundsetprop}
Let $K$ be a bound set relative to the equation $\dot{x}(t)=h(t,x(t))$ on $[0,T]$. If $x\in C\left([0,T],\overline{K}\right)$ is a solution to 
\begin{equation}\label{boundeq}
\dot{x}(t)=\lambdaup\,h(t,x(t)),
\end{equation}
with $\lambdaup\in(0,1)$, then $x((0,T))\subset K$.
\end{proposition}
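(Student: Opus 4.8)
The plan is to argue by contradiction, showing that a trajectory cannot touch $\bd K$ at an interior instant. Suppose to the contrary that there is some $\bar t\in(0,T)$ with $x(\bar t)\in\bd K$. Since $h$ is continuous and $x$ is continuous with $\dot{x}(t)=\lambdaup\,h(t,x(t))$ almost everywhere, the right-hand side $t\mapsto\lambdaup\,h(t,x(t))$ is continuous, so $x$ is in fact of class $C^1$ and $\dot{x}(\bar t)=\lambdaup\,h(\bar t,x(\bar t))$. Attach to the boundary point $x(\bar t)$ the bounding function $f=f(\cdot;x(\bar t))$ furnished by Definition \ref{boundset}, and form the scalar function $g:=f\circ x$ on $[0,T]$. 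Because $x(t)\in\overline{K}$ for every $t$, condition (iii) gives $g(t)=f(x(t))\<0$ for all $t$, while condition (ii) gives $g(\bar t)=f(x(\bar t))=0$. Hence $\bar t$ is an interior point of $[0,T]$ at which the locally Lipschitz function $g$ attains its global maximum.

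Next I would exploit the nonsmooth calculus already recalled in Section 2. Since $x$ is $C^1$ (hence strictly differentiable) at $\bar t$ and $f$ is locally Lipschitz near $x(\bar t)$, Clarke's chain rule yields the inclusion $\partial g(\bar t)\subset\{\langle p,\dot{x}(\bar t)\rangle\colon p\in\partial f(x(\bar t))\}$; the closed convex hull usually appearing on the right may be dropped because $\partial f(x(\bar t))$ is convex compact and $p\mapsto\langle p,\dot{x}(\bar t)\rangle$ is linear, so its image is already a compact interval. On the other hand, an interior extremum of a locally Lipschitz function satisfies the generalized Fermat rule $0\in\partial g(\bar t)$. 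Combining the two facts produces a covector $p\in\partial f(x(\bar t))$ with $\langle p,\dot{x}(\bar t)\rangle=0$. Substituting $\dot{x}(\bar t)=\lambdaup\,h(\bar t,x(\bar t))$ and using $\lambdaup\in(0,1)$, hence $\lambdaup\neq 0$, gives $\langle p,h(\bar t,x(\bar t))\rangle=0$ with $p\in\partial f(x(\bar t))$ and $\bar t\in(0,T)$, in direct contradiction with the transversality condition (iv). Therefore no interior instant maps to $\bd K$, i.e. $x(t)\in\overline{K}\setminus\bd K=K$ for every $t\in(0,T)$, which is the assertion.

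The step I expect to be the delicate one is the chain-rule inclusion: it rests on the strict differentiability of $x$ at $\bar t$ (guaranteed here by $x\in C^1$) and on the support-function identity $f^\circ(x(\bar t);v)=\max_{p\in\partial f(x(\bar t))}\langle p,v\rangle$, which identifies $\partial f$ through its support function $f^\circ$. Should one prefer to avoid citing the subdifferential chain rule, the same conclusion follows from the elementary one-sided estimates $g^\circ(\bar t;\pm 1)\<f^\circ(x(\bar t);\pm\dot{x}(\bar t))$, obtained by inserting the first-order expansion of $x$ and absorbing the remainder with the Lipschitz constant of $f$. These, together with $0\in\partial g(\bar t)$, force $f^\circ(x(\bar t);\dot{x}(\bar t))\geqslant 0$ and $f^\circ(x(\bar t);-\dot{x}(\bar t))\geqslant 0$; since the interval $\{\langle p,\dot{x}(\bar t)\rangle\colon p\in\partial f(x(\bar t))\}$ then has nonnegative maximum and nonpositive minimum, it must contain $0$, yielding the same contradictory covector $p$ and hence $\langle p,h(\bar t,x(\bar t))\rangle=0$.
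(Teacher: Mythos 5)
Your argument is correct and coincides with the paper's own proof: both identify the interior instant $\bar t$ as a maximizer of the locally Lipschitz function $f\circ x$, invoke the generalized Fermat rule $0\in\partial(f\circ x)(\bar t)$ together with Clarke's chain rule $\partial(f\circ x)(\bar t)\subset\{\langle p,\dot{x}(\bar t)\rangle\colon p\in\partial f(x(\bar t))\}$, and derive a contradiction with condition (iv) of Definition \ref{boundset} after cancelling $\lambdaup\neq 0$. Your additional remarks on the $C^1$ regularity of $x$ and the elementary one-sided estimates are fine but do not change the route.
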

\begin{proof}
Let $x$ be a solution to \eqref{boundeq} such that $x(t)\in\overline{K}$ for $t\in[0,T]$. Suppose that $x(t_0)\in\bd K$ for some $t_0\in(0,T)$. Let $f(\cdot)=f(\cdot;x(t_0))$ be the function given by Definition \ref{boundset}. It follows from properties (i)-(iii) that $f\circ x\colon(0,T)\to\R{}$ is locally Lipschitz and has a maximum at $t_0$. Therefore, $0\in\partial(f\circ x)(t_0)$ by \cite[Prop.2.3.2]{clarke}. Applying a chain rule \cite[Th.2.3.10]{clarke} we see that $\partial(f\circ x)(t_0)\subset\{\langle p,\dot{x}(t_0)\rangle\colon p\in\partial f(x(t_0))\}$. On the other hand, for every $p\in\partial f(x(t_0))$, $\langle p,\dot{x}(t_0)\rangle=\langle p,\lambdaup\, h(t_0,x(t_0))\rangle\neq 0$, by (iv) - a contradiction.
\end{proof}
We shall make use of the following topological description of strongly regular sets as Polish metric spaces.
\begin{lemma}\label{separable}
If the set $K$, represented by $f$, is strongly regular, then $\Int K\neq\varnothing$ and $K=\overline{\Int K}$. In particular, $K$ forms a separable subspace of the Euclidean space $\R{N}$.
\end{lemma}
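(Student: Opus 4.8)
The plan is to extract everything from a single local construction near boundary points, since the separability claim is essentially free and the inclusion $\overline{\Int K}\subset\overline{K}=K$ is automatic ($K$ being closed). Thus the real content is to show that every $x\in\bd K$ is a limit of points of $\Int K$; this simultaneously yields $K\subset\overline{\Int K}$ and, granting that $K$ is nonempty (as it is throughout), the nonemptiness of $\Int K$. First I would record that $f(x)=0$ for each $x\in\bd K$: indeed $x\in\overline K=K\subset\dom(f)$ forces $f(x)\<0$, while $x$ is also a limit of points of the open set $\dom(f)\setminus K$ on which $f>0$, so continuity of $f$ gives $f(x)\geqslant 0$.

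Second, I would exploit strong regularity. Since $0\notin\partial f(x)$, Example \ref{ex}$(3)$ furnishes a direction $v\in\R{N}$ with $f^\circ(x;v)<0$. Unwinding the definition of the Clarke directional derivative as a $\limsup$ over $y\to x$ and $h\to 0^+$, I obtain constants $\eps>0$ and $\delta>0$ such that
\[f(y+hv)<f(y)-\eps h\quad\text{whenever }|y-x|<\delta\text{ and }0<h<\delta,\]
all points involved lying in the open set $\dom(f)$ after shrinking $\delta$. This uniform decrease estimate --- valid not just along the ray from $x$ but for every nearby base point $y$ --- is the decisive feature, and it is exactly what the ``$y\to x$'' in Clarke's $\limsup$ provides, in contrast to an ordinary one-sided derivative.

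Third, I would convert this ray-wise decrease into a full neighbourhood inside $K$. Fix any $h_0\in(0,\delta)$. Because $f$ is continuous and $f(x)=0$, there is $\rho\in(0,\delta)$ with $f<\eps h_0$ on $B(x,\rho)$ and with $B(x+h_0v,\rho)\subset\dom(f)$. For $z\in B(x+h_0v,\rho)$ put $y:=z-h_0v\in B(x,\rho)$; then $f(z)=f(y+h_0v)<f(y)-\eps h_0<0$, so $z\in K$. Hence $B(x+h_0v,\rho)\subset K$, i.e. $x+h_0v\in\Int K$. Letting $h_0\to 0^+$ produces interior points converging to $x$, so $x\in\overline{\Int K}$. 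Combined with the trivial case $\bd K=\varnothing$ (where $K$ open and closed and nonempty forces $K=\R{N}$), this gives $K=\overline{\Int K}$ and $\Int K\neq\varnothing$. Finally, $\Int K$ is an open subset of the separable space $\R{N}$, hence separable, so its closure $K$ is a separable --- and, being closed in the complete space $\R{N}$, completely metrizable, i.e. Polish --- metric space.

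I expect the only genuine obstacle to be the third step: promoting the infinitesimal decrease of $f$ in the direction $v$ to an honest open ball contained in the sublevel set $K$. The temptation is to conclude merely that the segment $\{x+h_0v\}$ enters $K$, which would not produce interior points; the resolution is to apply the estimate with a variable base point $y$ ranging over a whole ball around $x$ and to absorb the resulting values $f(y)$ using continuity together with $f(x)=0$, which is precisely why the base-point uniformity of the Clarke derivative is indispensable here.
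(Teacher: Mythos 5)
Your proof is correct and rests on the same key idea as the paper's: strong regularity yields a descent direction $v$ with $f^\circ(x;v)<0$ at each boundary point, which forces points with $f<0$ (hence interior points, $\{f<0\}$ being open) arbitrarily close to $x$. The paper runs this as a contradiction argument (assuming $f\geqslant 0$ on a ball around $x$ and comparing $0\<\limsup_{h\to 0^+}\frac{f(x+hv)-f(x)}{h}$ with $f^\circ(x;v)<0$), while you argue directly via the base-point-uniform decrease estimate and an explicit ball in $K$; the difference is only cosmetic, and your final worry is moot since $f(x+h_0v)<0$ alone already places $x+h_0v$ in the open set $\{f<0\}\subset\Int K$.
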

\begin{proof}
Fix $x\in\bd K$. Suppose that there is $\eps>0$ such that $f(y)\geqslant 0$ for all $y\in B(x,\eps)$. From Definition \ref{K}. follows that $\inf\limits_{p\in\partial f(x)}|p|>0$, i.e.
\begin{align*}
\inf_{p\in\partial f(x)}\sup_{|v|=1}\,\langle p,v\rangle>0\Leftrightarrow\sup_{|z|=1}\inf_{p\in\partial f(x)}\langle p,v\rangle>0\Leftrightarrow-\inf_{|z|=1}\sup_{p\in\partial f(x)}\langle p,v\rangle>0\Leftrightarrow\inf_{|z|=1}f^\circ(x;v)<0.
\end{align*} 
Thus, there exists $v_x\in\R{N}$ such that $|v_x|=1$ and $f^\circ(x;v_x)<0$. For $h\in(0,\eps)$ we have $x+hv_x\in B(x,\eps)$ and by supposition $f(x+hv_x)\geqslant 0$. Passing to the limit with $h$ we obtain:
\[0\<\underset{h\to 0^+}{\overline{\lim}}\;\frac{f(x+hv_x)-f(x)}{h}\<\underset{h\to 0^+}{\underset{y\to x}{\overline{\lim}}}\frac{f(y+hv_x)-f(y)}{h}=f^\circ(x;v_x)<0\] - a contradiction. Thus, in every neighbourhood of the point $x\in\bd K$ we will find such a point $y$ that $f(y)<0$, i.e. $y\in\Int K$. Hence the conclusion that $K\subset\overline{\Int K}$.
\end{proof}
In the next two results we formulate sufficient conditions for the existence of antiperiodic solutions of differential inclusions defined on strongly regular sets. The proof of the first of them takes advantage of the continuation theorem. Theorem \ref{borsuk1}. is a consequence of the application of the generalized Borsuk theorem. In what follows $\ev_t\colon C(I,\R{N})\to\R{N}$ stands for the evaluation at point $t\in I$.
\begin{theorem}\label{th1}
Let $K$, represented by $f$, be a compact and strongly regular set. Assume that $F\colon I\times K\map\R{N}$ fulfills $(\F_1)$-$(\F_4)$ and that either \eqref{f} is satisfied or
\begin{equation}\label{-f}
\forall\,x\in\bd K\;\text{ the multimap }F(\cdot,x)\cap\partial(-f)(x)^\circ\text{ has a measurable selection}.
\end{equation}
Let $g\colon\R{N}\times\R{N}\to\R{N}$ be a continuous map. If the following conditions hold
\begin{equation}\label{warprop2}
\begin{gathered}
0\not\in g(\cdot,\cdot)(\bd K)\\
\deg\left(g(\cdot,\cdot),\Int K,0\right)\neq 0
\end{gathered}
\end{equation}
and
\begin{equation}\label{boundarycond}
\begin{aligned}
&\text{for every }x\in C^1(I,K)\cap(g\circ(\ev_0\times\ev_T))^{-1}(0)\text{ such that }x((0,T))\cap\bd K=\varnothing\\&\text{there is equivalence }x(0)\in\bd K\Leftrightarrow x(T)\in\bd K
\end{aligned}
\end{equation}
then the following nonlocal boundary value problem 
\begin{equation}\label{gcdot}
\begin{gathered}
x'\in F(t,x),\;\text{a.e. on } I,x\in K\\
g(x(0),x(T))=0
\end{gathered}
\end{equation}
possesses at least one solution.
\end{theorem}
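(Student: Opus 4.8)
The plan is to reduce the inclusion to a sequence of single-valued problems, solve each by the continuation theorem (Theorem \ref{maw}), and recover a solution of \eqref{gcdot} by a limiting argument resting on the convergence theorem (Theorem \ref{convergence}). Putting $\gamma(x):=g(x(0),x(T))$, one has $(\gamma\circ i)(x_0)=g(x_0,x_0)$, so the diagonal map $g(\cdot,\cdot)$ of \eqref{warprop2} is precisely the averaged operator governing hypotheses (ii)--(iii) of Theorem \ref{maw}. With $\w{\Omega}$ playing the role of $\Int K$, the nonvanishing $0\notin g(\cdot,\cdot)(\bd K)$ supplies (ii) and the requirement $\deg(g(\cdot,\cdot),\Int K,0)\neq0$ supplies (iii) verbatim; the whole difficulty is thus concentrated in the a priori bound (i).

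First I would fix a Dugundji system for $\R{N}\setminus K$ and, combining the tangency hypothesis with strong regularity, construct a sequence of Carath\'eodory single-valued fields $h_k\colon I\times\R{N}\to\R{N}$ which approximate $F$ in the graph sense, i.e. $h_k(t,x)\in\overline{\co}\,B\bigl(F(t,B(x,\eps_k)\cap K),\eps_k\bigr)$ for $x$ near $K$, and which on $\bd K$ point strictly inward when \eqref{f} holds, respectively strictly outward when \eqref{-f} holds. Concretely, the measurable selection furnished by \eqref{f} yields $v(t,x_0)\in F(t,x_0)$ with $\langle p,v(t,x_0)\rangle\<0$ for every $p\in\partial f(x_0)$; since $0\notin\partial f(x_0)$ by strong regularity, a small perturbation along a uniformly inward direction upgrades this to the strict inequality $\langle p,h_k(t,x_0)\rangle<0$. \emph{This is the main obstacle}: one must perform the graph-approximation, the strict-sign normalization on $\bd K$, and the Dugundji extension off $K$ simultaneously, while keeping the fields uniformly bounded so that Theorem \ref{convergence} applies at the end. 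The gain is that $\Int K$ becomes an autonomous bound set (Definition \ref{boundset}) relative to every equation $\dot x=h_k(t,x)$, condition (iv) there being exactly $\langle p,h_k(t,x_0)\rangle\neq0$.

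For the a priori bound I would take $\Omega:=\{x\in C(I,\R{N})\colon x(I)\subset\Int K\}$, which is open, bounded and satisfies $\w{\Omega}=\Int K$. Let $x\in\overline{\Omega}$ solve $\dot x=\lambdaup\,h_k(t,x)$ with $\lambdaup\in(0,1)$ and $\gamma(x)=0$; then $x(I)\subset K$, and Proposition \ref{boundsetprop} gives $x((0,T))\subset\Int K$. It remains to push the endpoints inside. Assuming \eqref{f}, suppose $x(T)\in\bd K$ and set $w=\dot x(T)=\lambdaup\,h_k(T,x(T))$; then $\langle p,w\rangle<0$ for all $p\in\partial f(x(T))$, so $f_\circ(x(T);-w)=-f^\circ(x(T);w)>0$, whence $f(x(T-s))>0$ for all sufficiently small $s>0$, contradicting $x(T-s)\in K$. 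Thus $x(T)\in\Int K$, and the equivalence in \eqref{boundarycond} forces $x(0)\in\Int K$ as well; under \eqref{-f} the identical argument at $t=0$ gives $x(0)\in\Int K$ and \eqref{boundarycond} propagates to $x(T)$. In either case $x(I)\subset\Int K$, i.e. $x\in\Omega$, so $x\notin\bd\Omega$ and (i) holds. Theorem \ref{maw} then yields a solution $x_k\in\overline{\Omega}\subset C(I,K)$ of $\dot x=h_k(t,x)$, $g(x(0),x(T))=0$.

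Finally I would pass to the limit. As $K$ is compact and $(\F_4)$ bounds the derivatives, the $x_k$ are equibounded and equi-Lipschitz, so Arzel\`a--Ascoli extracts a subsequence converging uniformly to some $x\in C(I,K)$, while $\dot x_k\rightharpoonup\dot x$ weakly in $L^1(I,\R{N})$. The graph-approximation property of $h_k$ then lets me invoke Theorem \ref{convergence} to conclude $\dot x(t)\in F(t,x(t))$ for a.a. $t$; closedness of $K$ gives $x(I)\subset K$ and continuity of $g$ gives $g(x(0),x(T))=0$. Hence $x$ is a solution of \eqref{gcdot}.
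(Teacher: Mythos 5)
Your overall strategy coincides with the paper's: approximate $F$ by single-valued transversal fields, solve each approximate problem with Theorem \ref{maw} using $\Int K$ as the bound set and \eqref{warprop2}, \eqref{boundarycond} for hypotheses (ii)--(iii) and the endpoint exclusion, then pass to the limit via Theorem \ref{convergence}. Your endpoint argument is, however, a genuinely different and simpler device: a pointwise one-sided difference-quotient estimate at $t=T$ (resp.\ $t=0$) using $f_\circ(x(T);-\dot x(T))>0$ and the local Lipschitz continuity of $f$, in place of the paper's claims \eqref{claim1}--\eqref{claim2}, which are proved by showing $(f\circ x)'>0$ a.e.\ on a one-sided neighbourhood (via Mazur's lemma and a Rademacher-type perturbation $x+z_n$) and integrating. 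Your version is cleaner, but it is valid only if $\dot x(T)$ exists and equals $\lambdaup h_k(T,x(T))$, i.e.\ only if the field is \emph{continuous} at $(T,x(T))$ --- which brings me to the gaps.

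There are two concrete gaps, both located in the step you yourself flag as ``the main obstacle''. First, Theorem \ref{maw} requires a \emph{continuous} $h$, whereas the natural construction from the measurable selections of \eqref{f}/\eqref{-f} produces only Carath\'eodory fields; you apply Theorem \ref{maw} (and the pointwise argument at $t=T$) directly to Carath\'eodory $h_k$, which does not meet its hypotheses. The paper inserts a second regularization layer (Scorza--Dragoni plus a Dugundji system in the \emph{time} variable, producing $W_n^k$ from $W_n$) and correspondingly a double limit $k\to\infty$, then $n\to\infty$; your single-index scheme omits this. Second, the sentence ``a small perturbation along a uniformly inward direction upgrades this to the strict inequality $\langle p,h_k(t,x_0)\rangle<0$'' hides the real difficulty: since $h_k(t,x_0)$ at a boundary point $x_0$ is a partition-of-unity combination $\sum_y\lambdaup_y(x_0)v_y(t)$ of selections attached to \emph{nearby} points $y$, one needs the strict inequality $f^\circ(x_0;v_y(t))<0$ to hold for all $x_0$ in a whole neighbourhood of $y$ (uniformly in $t$), not merely at $y$ itself; this uniform-on-neighbourhoods strict tangency (condition \eqref{<} in the paper, borrowed from the Bader--Kryszewski construction) is exactly what strong regularity and upper semicontinuity of $f^\circ$ are used to secure, and it is not obtained by perturbing a single selection at a single point. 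Both gaps are repairable along the paper's lines, but as written the proposal does not yet constitute a proof.
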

\begin{proof}
Let $h\in C(I\times\R{N},\R{N})$. Assume that either 
\begin{equation}\label{h}
\forall\,x_0\in\bd K\;\forall\,t\in I\;\;\;f^\circ(x_0;h(t,x_0))<0
\end{equation}
or 
\begin{equation}\label{-h}
\forall\,x_0\in\bd K\;\forall\,t\in I\;\;\;f^\circ(x_0;-h(t,x_0))<0.
\end{equation}
We claim that
\begin{equation}\label{claim1}
\begin{aligned}
&\text{under condition \eqref{h} every solution }x\in C(I,K)\text{ of the differential equation }\\&\dot{x}(t)=\lambdaup\,h(t,x(t))\text{ on }I,\text{with }\lambdaup\in(0,1),\text{satisfies }x(T)\not\in\bd K
\end{aligned}
\end{equation}
and
\begin{equation}\label{claim2}
\begin{aligned}
&\text{under condition \eqref{-h} every solution }x\in C(I,K)\text{ of the differential equation }\\&\dot{x}(t)=\lambdaup\,h(t,x(t))\text{ on }I,\text{with }\lambdaup\in(0,1),\text{satisfies }x(0)\not\in\bd K.
\end{aligned}
\end{equation}
Let $x$ be a solution to $\dot{x}(t)=\lambdaup\,h(t,x(t))$ on $I$ such that $x(t)\in K$ for $t\in I$. Denote by $\Omega_f$ the set of measure zero, where $f$ is not differentiable. There is a subset $J$ of full measure in the segment $I$ and a sequence $(z_n)_\n$ convergent to $0\in\R{N}$ such that $x_n(t):=x(t)+z_n\not\in\Omega_f$ for every $t\in J$ and $\n$. A similar line of reasoning was used in \cite[Lemma.3.]{lewicka}. Observe that $\{x_n(I)\}_\n$ is relatively compact and $f'\left(\{x_n(J)\}_\n\right)$ is a subset of a compact image $\partial f\left(\overline{\{x_n(I)\}}_\n\right)$. Therefore, there is $M>0$ such that $|f'(x_n(t))|\<M$ for $t\in J$ and $\n$. Consequently, $\{(f\circ x_n)(t)\}_\n$ is relatively compact for $t\in I$ and $|(f\circ x_n)'(t)|=|\langle f'(x_n(t)),\dot{x}_n(t)\rangle|\<M|\dot{x}(t)|$ for $t\in J$ and $\n$. Hence, by passing to a subsequence if necessary, we may assume that $f\circ x_n\to f\circ x$ uniformly on $I$ and $(f\circ x_n)'\to(f\circ x)'$ weakly in $L^1(I,\R{N})$ (see \cite[Th.0.3.4]{aubin}). By Mazur's lemma there is a sequence $(w_k)_{k\geqslant 1}$ strongly convergent to $(f\circ x)'$ such that $w_k\in\co\{(f\circ x_n)'\}_{n=k}^\infty$. We can assume w.l.o.g. that $(f\circ x)'(t)=\lim_{k\to\infty}w_k(t)$ for $t\in J$.\par Suppose for definiteness that $x(0)\in\bd K$. Then $f^\circ(x(0);-h(0,x(0)))<0$, due to the assumption \eqref{-h}. Since the mapping $f^\circ(\cdot;\cdot)$ is upper semicontinuous, there exixsts $\delta>0$ such that for every $t\in(0,\delta)$ we have $f^\circ(x(t);-\lambdaup\,h(t,x(t)))<0$.\par Fix $t_0\in(0,\delta)\cap J$. Let $\eta_{t_0}>0$ be such that $f^\circ(x(t_0);-\lambdaup\,h(t_0,x(t_0)))<-\eta_{t_0}$. One should be aware that $f^\circ(x(t_0);-\dot{x}(t_0))=\inf_{p\in\partial f(x(t_0))}\langle p,\dot{x}(t_0)\rangle>0$, which entails $|\dot{x}(t_0)|>0$. Take an arbitrary $\eps>0$. Due to upper semicontinuity of the generalized gradient $\partial f$, there must be a number $n_0$ such that for every $n\geqslant n_0$
\[-(f\circ x_n)'(t_0)=\langle f'(x(t_0)+z_n),-\dot{x}(t_0)\rangle\in\left\{\langle p+u,-\dot{x}(t_0)\rangle\colon p\in\partial f(x(t_0))\text{ and }|u|<\frac{\eps}{|\dot{x}(t_0)|}\right\}.\] For that reason $-(f\circ x_n)'(t_0)<f^\circ(x(t_0);-\dot{x}(t_0))+\eps<-\eta_{t_0}+\eps$ for $n\geqslant n_0$. Consequently, $w_k(t_0)>\eta_{t_0}-\eps$ for $k$ large enough and eventually $(f\circ x)'(t_0)\geqslant\eta_{t_0}$. As a matter of fact, we have shown that $(f\circ x)'(t)>0$ a.e. on $(0,\delta)$.\par Since $f\circ x\colon[0,\delta]\to\R{}$ is absolutely continuous (for suitably chosen $\delta$), we conclude that
\[f(x(\delta))-f(x(0))=\int_0^\delta(f\circ x)'(s)\,ds>0,\] i.e. $f(x(0))<f(x(\delta))\<0$. Thus, $x(0)\in\Int K$ and we arrive at contradiction.\par Now, assume for definiteness that condition \eqref{-f} holds. Reasoning based on the assumption \eqref{f} proceeds in an analogous manner. Construction given in the proof of \cite[Th.1.6.]{bader} indicates that for every $x\in\bd K$ and any positive integer $n$ there exists a measurable $v_x^n\colon I\to\R{N}$ such that 
\begin{equation}\label{bound}
v_x^n(t)\in B\left(F(t,x),\frac{1}{n}\right)\;\;\text{a.e. on }I
\end{equation}
and
\begin{equation}\label{<}
(-f)^\circ(y;v_x^n(t))<0\;\;\text{for every }(t,y)\in I\times B\left(x,\delta_x^n\right)\;\text{with }\delta^n_x\in\left(0,\frac{1}{3^n}\right).
\end{equation}
Let us mention in the context of the aforesaid proof that strong regularity implies both $\inf\limits_{|u|=1}f^\circ(x;u)<0$ and $\inf\limits_{|u|=1}(-f)^\circ(x;u)<0$ for $x\in\bd K$. If $x\in\Int K$, then we can choose a radius $\delta^n_x\in\left(0,\frac{1}{3^n}\right)$ and a measurable $v_x^n\colon I\to\R{N}$ in such a way that $B\left(x,\delta_x^n\right)\subset\Int K$ and $v_x^n(t)\in F(t,x)$ a.e. on $I$. \par For any $\n$ we have constructed a relatively open covering $\left\{B\left(x,\delta^n_x\right)\cap K\right\}_{x\in K}$ of $K$. Clearly, there exists a locally Lipschitz partition of unity $\left\{\lambdaup_y^n\colon K\to[0,1]\right\}_{y\in K}$ such that the family of supports $\left\{\supp\lambdaup_y^n\right\}_{y\in K}$ forms a locally finite (closed) covering of the space $K$, inscribed into the covering $\left\{B\left(y,\delta^n_y\right)\cap K\right\}_{y\in K}$, i.e. $\supp\lambdaup_y^n\subset B\left(y,\delta_y^n\right)$ for $y\in K$.\par Now, let us define a map $W_n\colon I\times K\to\R{N}$ by the formula 
\begin{equation}\label{W_n}
W_n(t,x):=\sum_{y\in K}\lambdaup^n_y(x)v^n_y(t).
\end{equation}
Of course, $W_n$ is of Carath\'eodory type. Since $K$ is a separable space (Lemma \ref{separable}.), the function $W_n$ possesses the Scorza-Dragoni's property (see \cite[Th.1.]{ricceri}). Therefore, given $\K$, one may find a closed $I_k\subset I$ such that the Lebesgue measure $\ell(I\setminus I_k)<\frac{1}{k}$ and the restriction $\res{W_n}{I_k\times K}$ is continuous.\par Let $\{I_s,t_s\}_{s\in S}$ be a Dugundji system for the complement $I_k^c$, while $\{\mu^n_{k,s}\}_{s\in S}$ be a family of continuous functions $\mu^n_{k,s}\colon I_k^c\to[0,1]$ constituting a locally finite partition of unity subordinated to the covering $\{I_s\}_{s\in S}$. An extension of $\res{W_n}{I_k\times K}$ to the continuous function $W_n^k\colon I\times K\to\R{N}$ can be achieved routinely by the formula
\begin{equation}\label{W_n^k}
W_n^k(t,x):=
\begin{cases}
W_n(t,x)&\text{for }(t,x)\in I_k\times K,\\
\sum\limits_{s\in S}\mu_{k,s}^n(t)W_n(t_s,x)&\text{for }(t,x)\in I_k^c\times K.
\end{cases}
\end{equation}
From \eqref{bound} we infer that every mapping $W_n^k$ is bounded by $c+\frac{1}{n}$ on the whole product $I\times K$. If $x\in\bd K\cap\supp\lambdaup^n_y$, then $y\in\bd K$. Otherwise $x\in\supp\lambdaup^n_y\subset B(y,\delta^n_y)\subset\Int K$ - contradiction. Hence, for $x\in\bd K$ and $S(x):=\{y\in K\colon x\in\supp\lambdaup^n_y\}$ we have, for $t\in I$
\begin{equation}\label{<W_n^k}
f^\circ\left(x;-W_n^k(t,x)\right)=(-f)^\circ\left(x;W_n^k(t,x)\right)\<\sum_{s\in S}\mu_{k,s}^n(t)\sum_{y\in S(x)}\lambdaup^n_y(x)\,(-f)^\circ\left(x;v^n_y(t_s)\right)<0,
\end{equation}
by \eqref{<}, \eqref{W_n} and \eqref{W_n^k}.\par Put $\Omega:=C(I,\Int K)$. Suppose that $x\in\overline{\Omega}$ is a solution to the following nonlocal boundary value problem
\[\begin{gathered}
\dot{x}(t)=\lambdaup W_n^k(t,x(t)),\;t\in I,\\
g(x(0),x(T))=0,
\end{gathered}\]
for some $\lambdaup\in(0,1)$. Taking into account property \eqref{<W_n^k} one easily sees that $\Int K$ is a bound set relative to the equation $\dot{x}(t)=W_n^k(t,x(t))$ on $I$. In view of Proposition \ref{boundsetprop}., $x((0,T))\cap\bd K=\varnothing$. Furthermore, by claim \eqref{claim2} we infer that $x(0)\not\in\bd K$. Therefore, $x(t)\not\in\bd K$ for each $t\in I$, due to the assumption \eqref{boundarycond}. In fact, we have shown that $x\not\in\bd\Omega$.\par Let $\gamma:=g\circ(\ev_0\times\ev_T)$. The equivalence between assumptions (ii) in Theorem \ref{maw}. and $0\not\in g(\cdot,\cdot)(\bd K)$ is evident. Put $\w{\Omega}:=\{x_0\in\R{N}\colon i(x_0)\in\Omega\}$. Clearly, $\w{\Omega}=\Int K$ and \[\deg(\gamma\circ i,\w{\Omega},0)=\deg\left(g(\cdot,\cdot),\Int K,0\right).\] The last quantity is nonzero, by \eqref{warprop2}. Applying Theorem \ref{maw}. we obtain a solution $x_k\in\overline{\Omega}$ of the undermentioned boundary value problem
\begin{equation}
\begin{gathered}
\dot{x}(t)=W_n^k(t,x(t)),\;t\in I,\\
g(x(0),x(T))=0.
\end{gathered}
\end{equation}
\par In view of compactness theorem \cite[Th.0.3.4.]{aubin} there is an accumulation point $\w{x}\in\overline{\Omega}$ of the sequence of solutions $(x_k)_\K$. We claim that $\w{x}$ is a Carath\'eodory solution of the following problem  
\begin{gather}
\dot{x}(t)=W_n(t,x(t)),\;\text{a.e. on }I,\label{equation}\\
g(x(0),x(T))=0.\label{condition}
\end{gather}
Indeed, for each $t\in I$ we are dealing with the following estimation
\begin{align*}
|\int_0^tW_n^k&(s,x_k(s))\,ds-\int_0^tW_n(s,\w{x}(s))\,ds|\\&\<\int_{[0,t]\cap I_k}\left|W_n^k(s,x_k(s))-W_n(s,\w{x}(s))\right|\,ds+\int_{[0,t]\cap I_k^c}\left|W_n^k(s,x_k(s))-W_n(s,\w{x}(s))\right|\,ds\\&\<\int_{[0,t]\cap I_k}\left|W_n(s,x_k(s))-W_n(s,\w{x}(s))\right|\,ds+2\int_{[0,t]\cap I_k^c}\left(c+\frac{1}{n}\right)\,ds.
\end{align*}
The first of the last two expressions tends to zero by virtue of Lebesgue's dominated convergence theorem and the other is not greater than $2\left(c+\frac{1}{n}\right)\frac{1}{k}$. This shows that $\w{x}$ satisfies equation \eqref{equation}. Obviously, it also satisfies boundary condition \eqref{condition}, due to the continuity of the operator $g$.\par As we have established above, for each $\n$, there exists a solution $x_n\in\overline{\Omega}$ of the problem \eqref{equation}-\eqref{condition}. One easily sees that \[W_n(t,x)\in\sum_{y\in K}\lambdaup_y^n(x)\,\overline{\co}B\left(F(t,B(y,2\cdot3^{-n})\cap K),\frac{1}{n}\right)\subset\overline{\co}B\left(F\left(t,B\left(x,3^{-n+1}\right)\cap K\right),\frac{1}{n}\right)\] for every $\n$, $x\in K$ and for a.a. $t\in I$. According to theorem \cite[Th.0.3.4.]{aubin}, the sequence $(x_n)_\n$ possesses an accumulation point $x\in AC(I,K)$, such that
\[\begin{cases}
x_n(t)\to x(t)&\text{for }t\in I,\\
\dot{x}_n\rightharpoonup\dot{x}&\text{in }L^1(I,\R{N}),\\
\dot{x}_n(t)\in\overline{\co}B\left(F\left(t,B\left(x_n(t),3^{-n+1}\right)\cap K\right),\frac{1}{n}\right)&\text{a.e. on }I.
\end{cases}\]
Hence, by the convergence theorem (Theorem \ref{convergence}.), $\dot{x}(t)\in F(t,x(t))$ a.e. on $I$. Summing up, $x$ is the sought solution to the problem \eqref{inclusion}.
\end{proof}
\begin{corollary}
Assume that $K$ is a compact epi-Lipschitz subset of $\R{N}$. Let $g\colon\R{N}\times\R{N}\to\R{N}$ be a continuous map such that \eqref{warprop2} and \eqref{boundarycond} are fulfilled. If $F\colon I\times K\map\R{N}$ satisfies conditions $(\F_1)$-$(\F_4)$ and either
\begin{equation}\label{fepi}
\text{the multimap }F(\cdot,x)\cap C_K(x)\text{ has a measurable selection for every }x\in\bd K
\end{equation}
or
\begin{equation}\label{-fepi}
\text{the multimap }F(\cdot,x)\cap C_{K^c}(x)\text{ has a measurable selection for every }x\in\bd K,
\end{equation}
then the nonlocal boundary value problem \eqref{gcdot} possesses a solution.
\end{corollary}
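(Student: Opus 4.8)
The plan is to show that, under the stated hypotheses, the constraint set $K$ together with a judiciously chosen representing function satisfies every assumption of Theorem \ref{th1}, so that the conclusion follows immediately from that theorem. I would take the representing function to be the oriented distance $f:=\Delta_K=d_K-d_{K^c}$. This $f$ is globally Lipschitz on $\dom(f)=\R{N}$ and represents $K$, since $\Delta_K(x)=-d_{K^c}(x)\<0$ for $x\in K$ while $\Delta_K(x)=d_K(x)>0$ for $x\notin K$. By Example \ref{ex}$(3)$ the epi-Lipschitz set $K$ is strongly regular with respect to $\Delta_K$; in particular $0\notin\partial\Delta_K(x)$ for every $x\in\bd K$. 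Combined with the assumed compactness of $K$, this secures the geometric hypotheses of Theorem \ref{th1}. The boundary operator $g$ and the conditions \eqref{warprop2} and \eqref{boundarycond} are assumed verbatim, so nothing remains to be verified there.

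The substance of the argument lies in identifying the relevant polar cones with the Clarke tangent cones. I would establish that, for every $x\in\bd K$,
\[\partial\Delta_K(x)^\circ=C_K(x),\qquad\partial(-\Delta_K)(x)^\circ=C_{K^c}(x).\]
The inclusion $\partial\Delta_K(x)^\circ\subset C_K(x)$ is already recorded in Example \ref{ex}$(3)$ (it holds precisely because $0\notin\partial\Delta_K(x)$), so the real point is the reverse inclusion $C_K(x)\subset\partial\Delta_K(x)^\circ$, i.e. $\Delta_K^\circ(x;v)\<0$ whenever $v\in C_K(x)$; this is exactly the regularity of the oriented distance function for epi-Lipschitz sets due to Cornet (\cite{cornet}). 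For the second identity I would use the symmetry $-\Delta_K=d_{K^c}-d_K=\Delta_{\overline{K^c}}$, where the last equality relies on $K=\overline{\Int K}$ from Lemma \ref{separable} (which gives $d_{\Int K}=d_K$, so that $d_{K^c}-d_K=d_{\overline{K^c}}-d_{(\overline{K^c})^c}$). Since $\overline{K^c}$ is again epi-Lipschitz (the epi-Lipschitz property being symmetric between $K$ and $\overline{K^c}$) and $\bd K=\bd(\overline{K^c})$, applying the first identity to $\overline{K^c}$ yields $\partial(-\Delta_K)(x)^\circ=C_{\overline{K^c}}(x)=C_{K^c}(x)$.

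With these identities in hand the two alternatives of the corollary translate directly into the two alternatives of Theorem \ref{th1}: a measurable selection of $F(\cdot,x)\cap C_K(x)$ as in \eqref{fepi} is, on the boundary, a measurable selection of $F(\cdot,x)\cap\partial\Delta_K(x)^\circ$, which is condition \eqref{f}, and likewise \eqref{-fepi} becomes condition \eqref{-f}. Thus all hypotheses of Theorem \ref{th1} are met and the nonlocal boundary value problem \eqref{gcdot} possesses a solution. The main obstacle is the reverse cone inclusion $C_K(x)\subset\partial\Delta_K(x)^\circ$: the paper supplies only the opposite inclusion for a generic representing function, so one must genuinely exploit the special structure of the oriented distance $\Delta_K$ to upgrade it to the equality required here; everything else is bookkeeping that reduces the statement to Theorem \ref{th1}.
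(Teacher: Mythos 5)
Your proposal is correct and follows essentially the same route as the paper: represent $K$ by the oriented distance $\Delta_K=d_K-d_{K^c}$, invoke strong regularity of epi-Lipschitz sets with respect to $\Delta_K$, identify $\partial\Delta_K(x)^\circ=C_K(x)$ and $\partial(-\Delta_K)(x)^\circ=C_{K^c}(x)$ (the paper cites Hiriart-Urruty for the first equality and notes that $K^c$ is again epi-Lipschitz for the second, exactly as you do via $-\Delta_K=\Delta_{\overline{K^c}}$), and then reduce \eqref{fepi}--\eqref{-fepi} to \eqref{f}--\eqref{-f} so that Theorem \ref{th1} applies. Your write-up merely supplies a few more details (e.g.\ using $K=\overline{\Int K}$ to justify $-\Delta_K=\Delta_{\overline{K^c}}$) than the paper's terser argument.
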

\begin{proof}
An epi-Lipschitz set $K\subset\R{N}$ is represented by a Lipschitz continuous function $\Delta_K\colon\R{N}\to\R{}$ given by $\Delta_K:=d_K-d_{K^c}$. It turns out that $K$ is strongly regular with respect to $\Delta_K$ (\cite{cornet}). Furthermore, in view of \cite[Th.3.]{urruty} we have $\partial\Delta_K(x)^\circ=C_K(x)$. A simple calculation shows that the complementary set $K^c$ of $K$ in $\R{N}$ is also epi-Lipschitz and $\partial(-\Delta_K)(x)^\circ=\partial\Delta_{K^c}(x)^\circ=C_{K^c}(x)$.
\end{proof}

\begin{theorem}\label{borsuk1}
Suppose $K$, represented by an even $f$, is compact symmetric strongly regular and $0\in\Int K$. Let $F\colon I\times K\map\R{N}$ be a bounded lower Carath\'eodory type multivalued mapping with closed convex values. If, for every $x\in\bd K$, either condition
\begin{equation}\label{01}
0\in\left(\max\limits_{t\in I}\max\limits_{z\in F(t,x)}f^\circ(x;z),\min\limits_{t\in I}\min\limits_{u\in F(t,-x)}f_\circ(x;u)\right)
\end{equation}
or condition
\begin{equation}\label{02}
0\in\left(\max\limits_{t\in I}\max\limits_{u\in F(t,-x)}f^\circ(x;u),\min\limits_{t\in I}\min\limits_{z\in F(t,x)}f_\circ(x;z)\right)
\end{equation}
holds, then the antiperiodic problem \eqref{anti} has at least one solution.
\end{theorem}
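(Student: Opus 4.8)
The plan is to exploit the lower semicontinuity of $F$ in order to linearise the inclusion by a genuine continuous selection, and then to solve the resulting single-valued antiperiodic problem by the continuation theorem (Theorem~\ref{maw}), reading the nonvanishing of the attached Brouwer degree off Borsuk's theorem. First I would unwind the geometric content of \eqref{01} and \eqref{02}. Recall that $f^\circ(x;v)=\max_{p\in\partial f(x)}\langle p,v\rangle$ and $f_\circ(x;v)=\min_{p\in\partial f(x)}\langle p,v\rangle$, that $f_\circ(x;v)=-f^\circ(x;-v)$, and that the evenness of $f$ gives $\partial f(-x)=-\partial f(x)$, hence $f^\circ(-x;v)=f^\circ(x;-v)$ and $f_\circ(-x;v)=f_\circ(x;-v)$. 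Combining these, \eqref{01} at a point $x\in\bd K$ says exactly that $f^\circ(x;z)<0$ for every $z\in F(t,x)$ and $f^\circ(-x;u)<0$ for every $u\in F(t,-x)$, i.e. the field points strictly inward (in the Clarke sense) at both $x$ and $-x$; dually, \eqref{02} says it points strictly outward at both $x$ and $-x$. Since these are strict, open, mutually exclusive conditions that are compatible across each antipodal pair, $\bd K$ splits into two relatively clopen, antipodally invariant pieces, an \emph{inward} part and an \emph{outward} part, so a selection respecting the tangency can be chosen consistently.

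Next I would build the field. As $F$ is a bounded lower Carath\'eodory multimap with closed convex values, it admits a Carath\'eodory selection; intersecting $F(t,x)$ near $\bd K$ with the open convex cone prescribed by \eqref{01}/\eqref{02} (a nonempty intersection there, in fact equal to $F(t,x)$ on $\bd K$) and gluing with a free interior selection by a locally Lipschitz partition of unity, which preserves membership in $F(t,x)$ by convexity, produces a Carath\'eodory selection $w\colon I\times K\to\R{N}$ with $w(t,x)\in F(t,x)$ and with $w(t,\cdot)$ strictly inward (resp. outward) on the inward (resp. outward) part of $\bd K$. Upgrading $w$ to a genuinely continuous field $w^k$ by the Scorza--Dragoni property and a Dugundji extension, exactly as in \eqref{W_n^k}, keeps the tangency on $\bd K$ intact, the inward and outward cones being convex.

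I would then apply Theorem~\ref{maw} with $h:=w^k$, $g(u,v):=u+v$, $\gamma:=g\circ(\ev_0\times\ev_T)$ and $\Omega:=C(I,\Int K)$, noting $\overline{\Omega}=C(I,K)$ by Lemma~\ref{separable}. Condition (iii) is the Borsuk input: $\gamma\circ i(x_0)=2x_0$ is odd and $\w{\Omega}=\Int K$ is symmetric about $0\in\Int K$, so $\deg(\gamma\circ i,\w{\Omega},0)=\deg(2\,\mathrm{id},\Int K,0)\neq0$; condition (ii) is immediate, since $2x_0=0$ forces $x_0=0\in\Int K$. The whole weight falls on condition (i): that no solution $x$ of $\dot{x}=\lambdaup w^k(t,x)$, $x(0)+x(T)=0$, $\lambdaup\in(0,1)$, lies on $\bd\Omega$; as $\bd\Omega\subset C(I,K)$ it suffices to rule out $x(t)\in\bd K$ for some $t$ under the standing hypothesis $x(I)\subset K$. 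For interior times this follows from the bound set machinery, because on $\bd K$ one has $\langle p,w^k(t,x_0)\rangle<0$ for all $p\in\partial f(x_0)$ on the inward part, and $>0$ for all such $p$ on the outward part; in either case Definition~\ref{boundset}(iv) holds, $\Int K$ is a bound set, and Proposition~\ref{boundsetprop} gives $x((0,T))\subset\Int K$.

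The endpoints are the crux, and here the antiperiodic coupling $x(T)=-x(0)$ meets the antipodal reading of \eqref{01}/\eqref{02}. Suppose $x(0)\in\bd K$, so $x(T)=-x(0)\in\bd K$ as well. On the inward part, inwardness at $x(T)$ gives $f^\circ(x(T);\dot{x}(T))<0$, whence $\limsup_{s\to0^+}s^{-1}(f(x(T))-f(x(T-s)))\le f^\circ(x(T);\dot{x}(T))<0$, so $f(x(T-s))>f(x(T))=0$ for small $s>0$, which is impossible since $x(T-s)\in K$. On the outward part, outwardness at $x(0)$ gives $f_\circ(x(0);\dot{x}(0))>0$, whence $f(x(s))>f(x(0))=0$ for small $s>0$, again impossible. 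Thus $x(0),x(T)\in\Int K$ and $x\notin\bd\Omega$. The solutions $x_k$ delivered by Theorem~\ref{maw} then converge, after the routine $[0,t]\cap I_k$ versus $[0,t]\cap I_k^c$ estimate of the proof of Theorem~\ref{th1} (using $\ell(I\setminus I_k)\to0$), to an absolutely continuous $x$ with $\dot{x}(t)=w(t,x(t))\in F(t,x(t))$ a.e., $x(I)\subset K$ and $x(0)=-x(T)$, a solution of \eqref{anti}. I expect the main difficulty to be precisely this endpoint step: getting the Clarke-derivative sign bookkeeping right so that \eqref{01}/\eqref{02}, read through evenness at the antipodal point, force the trajectory strictly off $\bd K$ at whichever endpoint is unfavourable, together with arranging the tangent selection consistently across the inward and outward parts of $\bd K$. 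Note that, in contrast to Theorem~\ref{th1}, the lower semicontinuity of $F$ lets us work with an exact selection $w$, so no appeal to the convergence theorem (Theorem~\ref{convergence}) is needed.
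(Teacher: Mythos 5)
Your proposal is correct, and it takes a genuinely different route at the decisive continuation step. The paper produces the same ingredients you do --- a Carath\'eodory selection $g$ of $F$ via Artstein--Prikry and continuous approximants $g_k$ via Scorza--Dragoni plus a Dugundji extension --- but then places the approximate problem in the coincidence-degree framework $Lx=(\dot x,0)$, $N(x)=(g_k(\cdot,x(\cdot)),\ev_0(x)+\ev_T(x))$ and applies the generalized Borsuk theorem of Mawhin along the antipodal homotopy $(L-N)(x)=\lambda(L-N)(-x)$; this obliges the author to obtain a priori bounds for the perturbed field $g_k(t,x)-\lambda g_k(t,-x)$ and to run an intermediate-value argument excluding the simultaneous occurrence of the two endpoint tangency configurations. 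You instead feed $w^k$ directly into Theorem \ref{maw} with $\gamma(x)=x(0)+x(T)$ and the plain homotopy $\dot x=\lambda w^k(t,x)$, so the finite-dimensional field is $2\,id$, whose degree on $\Int K$ is $1$ by a linear homotopy to the identity (no Borsuk theorem is needed, only $0\in\Int K$); the symmetry of $K$ and the evenness of $f$ enter solely through the transfer of \eqref{01}/\eqref{02} from $x(0)$ to $x(T)=-x(0)$ (your identity $f_\circ(x;u)>0\Leftrightarrow f^\circ(-x;u)<0$ is the right one), which reduces the endpoint exclusion to the two clean one-sided estimates you state --- a contradiction at $t=T$ in the inward case and at $t=0$ in the outward case --- while interior times are handled by the same bound-set argument in both proofs, and the passage to the limit is the paper's own estimate. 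Two cosmetic remarks: the decomposition of $\bd K$ into clopen inward/outward parts is superfluous, since the argument is pointwise and any single Carath\'eodory selection inherits the tangency on all of $\bd K$; and, exactly as in the paper's proof of Theorem \ref{th1}, one should tacitly extend $w^k$ from $I\times K$ to $I\times\R{N}$ before quoting Theorem \ref{maw}, observing that solutions leaving $K$ cannot lie on $\bd\Omega\subset C(I,K)$. What your route gives up is only the showcase of the Borsuk technique; what it buys is a shorter homotopy estimate and a more transparent endpoint analysis.
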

\begin{proof}
In view of \cite[Th.3.2.]{prikry} the set-valued map $F$ admits a Carath\'eodory type selection $g\colon I\times K\to\R{N}$. The alternative of tangency conditions \eqref{01}-\eqref{02} placed in the context of the selection $g$ assumes the form: for every $x\in\bd K$ one of the following conditions holds
\begin{itemize}
\item[(a)] $f^\circ(x;g(t,x))<0<f_\circ(x;g(t,-x))$ for $t\in I$,
\item[(b)] $f^\circ(x;g(t,-x))<0<f_\circ(x;g(t,x))$ for $t\in I$.
\end{itemize}
As previously shown (that is, in the proof of Theorem \ref{th1}.) the {Ca\-ra\-th\'eodory} map $g$ is in fact uniformly $L_1$-{approxi\-mable} by certain $\left(g_k\right)_\K\subset C\left(I\times K,\R{N}\right)$, i.e. \[\int_I\sup_{x\in K}\left|\,g_k(t,x)-g(t,x)\right|\,dt\xrightarrow[k\to\infty]{}0.\] The approximant $g_k$ is defined by means of a suitable partition of unity $\left\{\mu_{k,s}\colon\! I_k^c\to[0,1]\right\}_{s\in S}$ associated with a Dugundji system $\{I_s,t_s\}_{s\in S}$ for the complement $I_k^c$ (see \eqref{W_n^k} for exact formula). It is easy to convince oneself that, for each $x\in\bd K$, functions $g_k$ also meet one of the hypotheses (a) or (b). Indeed, fix $x\in\bd K$. Suppose for definiteness that (a) holds. Then
\[f^\circ(x;g_k(t,x))=f^\circ\left(x;\sum_{s\in S}\mu_{k,s}(t)g(t_s,x)\right)\<\sum_{s\in S}\mu_{k,s}(t)f^\circ(x;g(t_s,x))<0\] and
\[f_\circ(x;g_k(t,-x))=-(-f)^\circ\left(x;\sum_{s\in S}\mu_{k,s}(t)g(t_s,-x)\right)\geqslant\sum_{s\in S}\mu_{k,s}(t)f_\circ(x;g(t_s,-x))>0\] for all $t\in I$.\par Let $X:=C(I,\R{N})$, $Z:=C(I,\R{N})\times\R{N}$, $\dom L:=C^1(I,\R{N})$; $L\colon\dom L\subset X\to Z$ be such that $Lx:=(\dot{x},0)$. Then $\ker L=i(\R{N})$, $\im L=C(I,\R{N})\times \{0\}$ and $\coker L\approx\R{N}$, i.e. $L$ is a Fredholm mapping of index zero. Define continuous linear operators $P\colon X\to X$ and $Q\colon Z\to Z$ by $P:=\ev_0$ and $Q((y,x_0)):=(0,x_0)$. Clearly, $(P,Q)$ is an exact pair of idempotent projections with respect to the mapping $L$. Let $\Omega:=C(I,\Int K)$ and $N\colon\overline{\Omega}\to Z$ be a nonlinear operator given by \[N(x):=(g_k(\cdot,x(\cdot)),\ev_0(x)+\ev_T(x)).\] It is obvious that the antiperiodic problem 
\begin{equation}\label{g_k}
\begin{gathered}
\dot{x}(t)=g_k(t,x(t)),\;t\in I,x\in K\\
x(0)=-x(T),
\end{gathered}
\end{equation}
is equivalent to the operator equation $(L-N)(x)=0$. \par Assume that $x\in\overline{\Omega}$ is a solution to the following nonlocal boundary value problem
\begin{equation}\label{g_klambda}
\begin{gathered}
(1+\lambdaup)\dot{x}(t)=g_k(t,x(t))-\lambdaup\,g_k(t,-x(t)),\;t\in I,x\in K\\
x(0)=-x(T),
\end{gathered}
\end{equation}
for some $\lambdaup\in(0,1]$. Suppose there is a point $t_0\in(0,T)$ such that $x(t_0)\in\bd K$. Then one of the conditions (a)-(b) is satisfied. Assuming for definiteness that (a) holds we obtain \[\langle p,g_k(t_0,x(t_0))\rangle<0<\langle p,g_k(t_0,-x(t_0))\rangle\] for all $p\in\partial f(x(t_0))$. Thus \[\langle p,g_k(t_0,x(t_0))-\lambdaup\,g_k(t_0,-x(t_0))\rangle<0,\] which means that $\langle p,(1+\lambdaup)\dot{x}(t_0)\rangle<0$ for all $p\in\partial f(x(t_0))$. Consequently, \[0\not\in\langle\partial f(x(t_0)),\dot{x}(t_0)\rangle=\{\langle p,\dot{x}(t_0)\rangle\colon p\in\partial f(x(t_0))\}.\] On the other hand, the composite function $f\circ x\colon(0,T)\to\R{}$ is locally Lipschitz and attains a maximum at $t_0$. On the basis of \cite[Prop.2.3.2]{clarke} and \cite[Th.2.3.10]{clarke} we infer that $0\in\partial(f\circ x)(t_0)\subset\langle\partial f(x(t_0)),\dot{x}(t_0)\rangle$, which yields a contradiction.\par Let $h\in C\left(I\times K,\R{N}\right)$ be given by $h(t,x):=g_k(t,x)-\lambdaup\,g_k(t,-x)$. The solution to the problem \eqref{g_klambda} is also a solution of the differential equation $\dot{x}(t)=(1+\lambdaup)^{-1}h(t,x(t))$ on $I$, where $(1+\lambdaup)^{-1}\in\left[\frac{1}{2},1\right)$.\par We claim that $x(0)\not\in\bd K$. Suppose to the contrary that $x(0)\in\bd K$. Moreover, assume that
\begin{equation}\label{first}
f^\circ(x(0);g_k(0,x(0)))<0<f_\circ(x(0);g_k(0,-x(0)))
\end{equation}
and at the same time condition
\begin{equation}\label{second}
f^\circ(x(T);g_k(T,-x(T)))<0<f_\circ(x(T);g_k(T,x(T)))
\end{equation}
is satisfied. Since $f$ is even, so by \eqref{first} we get
\begin{align*}
f^\circ(-x(T);g_k(0,-x(T)))<&0<f_\circ(-x(T);g_k(0,x(T)))\Leftrightarrow\\&\Leftrightarrow f^\circ(x(T);-g_k(0,-x(T)))<0<f_\circ(x(T);-g_k(0,x(T)))
\\&\Leftrightarrow f^\circ(x(T);g_k(0,x(T)))<0<f_\circ(x(T);g_k(0,-x(T))).
\end{align*}
The latter in connection with \eqref{second} entails
\[f^\circ(x(T);g_k(0,x(T)))<0<f^\circ(x(T);g_k(T,x(T))).\]
Given that $f^\circ(x(T);g_k(\cdot,x(T)))$ is a continuous map, there exists $t_0\in(0,T)$ such that $f^\circ(x(T);g_k(t_0,x(T)))=0$. However, since $x(T)\in\bd K$ (recall that $K$ is symmetric), we have $f^\circ(x(T);g_k(t_0,x(T)))<0$ or $f_\circ(x(T);g_k(t_0,x(T)))>0$ in view of (a)-(b). In both cases, there is $f^\circ(x(T);g_k(t_0,x(T)))\neq 0$ - a contradiction. Therefore, conditions \eqref{first} and \eqref{second} can not occur simultaneously. \par Suppose that condition \eqref{first} is not met. However, since $x(0)\in\bd K$, so the inequality $f^\circ(x(0);g_k(0,-x(0)))<0<f_\circ(x(0);g_k(0,x(0)))$ must be true. This means that \[\forall\,p\in\partial f(x(0))\;\;\;\langle p,-g_k(0,x(0))\rangle<0\,\wedge\,0<\langle p,-g_k(0,-x(0))\rangle.\] Thus \[\forall\,p\in\partial f(x(0))\;\;\;\langle p,-g_k(0,x(0))+\lambdaup\,g_k(0,-x(0))\rangle<0\] and eventually $f^\circ(x(0);-h(0,x(0)))<0$. At this point, we may refer to the proof of the property \eqref{claim2} to ascertain that $x(0)\not\in\bd K$.\par This time let us assume that condition \eqref{second} is not fulfilled. Thus $f^\circ(x(T);g_k(T,x(T)))<0<f_\circ(x(T);g_k(T,-x(T)))$ must hold, because $x(T)\in\bd K$. It follows that \[\forall\,p\in\partial f(x(T))\;\;\;\langle p,g_k(T,x(T))\rangle<0\,\wedge\,0<\langle p,g_k(T,-x(T))\rangle\] and thus \[\forall\,p\in\partial f(x(T))\;\;\;\langle p,g_k(T,x(T))-\lambdaup\,g_k(T,-x(T))\rangle<0.\] Consequently, $f^\circ(x(T);h(T,x(T)))<0$. As previously, referring to the justification of the property \eqref{claim2} and, indirectly, of the property \eqref{claim1} we are able to deduce that $x(T)\not\in\bd K$. Hence $x(0)\not\in\bd K$.\par Summarizing the findings so far, we see that $x(I)\cap\bd K=\varnothing$, i.e. $x\not\in\bd\Omega$. As can be easily seen, the boundary value problem \eqref{g_klambda} is equivalent to the following operator equation
\[(L-N)(x)=\lambdaup(L-N)(-x)\] in $\dom L\cap\overline{\Omega}$. Therefore, we have just proved that $(L-N)(x)\neq\lambdaup(L-N)(-x)$ for every $(x,\lambdaup)\in(\dom L\cap\bd\Omega)\times(0,1]$. By virtue of \cite[Th.IV.2.]{maw} there is at least one solution $x_k\in C^1(I,K)$ to the antiperiodic problem \eqref{g_k}. The compactness of $K$ implies that $(x_k)_\K$ is a bounded sequence in $C(I,\R{N})$. Since the solutions $x_k$ are equicontinuous, by passing to a subsequence if necessary, we may suppose that $x_k\to x_0$ uniformly on $I$, where $x_0\colon I\to\R{N}$ is absolutely continuous. By virtue of the following estimation
\begin{align*}
|\int_0^tg_k(s,x_k(s))\,ds&-\int_0^tg(s,x_0(s))\,ds|\\&\<\int_I\left|g_k(s,x_k(s))-g(s,x_k(s))\right|\,ds+\int_I\left|g(s,x_k(s))-g(s,x_0(s))\right|\,ds\\&\<\int_I\sup_{x\in K}\left|g_k(s,x)-g(s,x)\right|\,ds+\int_I\left|g(s,x_k(s))-g(s,x_0(s))\right|\,ds
\end{align*}
for $t\in I$, we infer that $x_0$ is a solution to the following antiperiodic problem 
\begin{equation*}
\begin{gathered}
\dot{x}(t)=g(t,x(t)),\;\text{a.e. on }I,x\in K\\
x(0)=-x(T).
\end{gathered}
\end{equation*}
Seeing that the function $g$ is a selection of the multivalued right-hand side $F$, it is absolutely clear that $x_0$ provides also a solution to the problem \eqref{anti}.
\end{proof}
\begin{remark}
It should be noted that Theorem \ref{borsuk1}. constitutes a distinct result in relation to Theorem \ref{th1}., even in the case when the right-hand side $F$ of the inclusion \eqref{anti} is univalent. Under the assumptions of Theorem \ref{borsuk1}. neither condition \eqref{f} nor \eqref{-f} is satisfied. For instance, assumption \eqref{01} is in fact equivalent with conjunction
\[\forall\,t\in I\;\;\;F(t,x)\subset\Int\left[\partial f(x)^\circ\right]\,\wedge\,F(t,-x)\subset\Int\left[\partial(-f)(x)^\circ\right].\] On the other hand, if the representing function $f$ is even, then, in particular, condition \eqref{f} implies \[\forall\,x\in\bd K\;\;\;0\in\left[\max\limits_{t\in I}\min\limits_{z\in F(t,x)}f^\circ(x;z),\min\limits_{t\in I}\max\limits_{u\in F(t,-x)}f_\circ(x;u)\right].\] 
\end{remark}
\par Let $K:=f^{-1}((-\infty,0])$ be a compact regular set. Definition \ref{K}. indicates that there is an open neighbourhood $\Omega$ with compact closure such that $K\subset\Omega\subset\overline{\Omega}\subset\dom(f)$ and $0\not\in\partial f(x)$ for each $x\in\overline{\Omega}\setminus K$. Let $M:=\inf_{x\in\bd\Omega}f(x)$ and $\Omega_M:=\{x\in\overline{\Omega}\colon f(x)<M\}$. For any $\eps\in(0,M)$ we may define $K_\eps:=\{x\in\overline{\Omega}\colon f(x)\<\eps\}\subset\Omega$. It is clear that the compact $\eps$-sublevel set $K_\eps$ is strongly regular as a constraint set represented by $\Omega\ni x\mapsto f_\eps(x):=f(x)-\eps$, i.e. $0\not\in\partial f_\eps(x)=\partial f(x)$ provided $x\in\bd K_\eps$. \par The subsequent theorem addresses the question of existence of trajectories forming antiperiodic solutions to the viability problem with the set of state constraints having possibly empty interior. The idea behind this result was to refine the outcome of Theorem \ref{th9}. in the context of antiperiodic boundary condition.
\begin{theorem}\label{ant}
Assume that $K$, represented by $f$, is a compact regular set containing origin. Suppose further that
\begin{equation}\label{epilevel}
\exists\,\eps_0>0\;\forall\,\eps\in(0,\eps_0)\;\;\;\dom(f)\setminus\Int K_\eps\text{ is symmetric.}
\end{equation}
Assume that $F\colon I\times K\map\R{N}$ satisfies $(\F_1)$-$(\F_4)$ and that either 
\begin{equation}\label{liminf}
\forall\,x\in\bd K\;\;\;F(\cdot,x)\cap\underset{y\to x,y\notin K}{\Liminf}\,\partial f(y)^\circ\text{ has a measurable selection}
\end{equation}
or
\begin{equation}\label{-liminf}
\forall\,x\in\bd K\;\;\;F(\cdot,x)\cap\underset{y\to x,y\notin K}{\Liminf}\,\partial(-f)(y)^\circ\text{ has a measurable selection.}
\end{equation}
Then there is a solution to the antiperiodic problem \eqref{anti}.
\end{theorem}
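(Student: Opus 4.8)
The plan is to squeeze $K$ between the strongly regular thickened sublevel sets $K_\eps$ constructed immediately before the statement, to solve the antiperiodic problem on each $K_\eps$ by the continuation machinery of Theorem \ref{th1}, and then to pass to the limit $\eps\to 0^+$. First I would record the geometry of the approximants. Fixing the neighbourhood $\Omega$ furnished by regularity of $K$, I set $K_\eps=\{x\in\overline{\Omega}\colon f(x)\<\eps\}$ for $\eps\in(0,\min\{M,\eps_0\})$. These are compact and, as already noted, strongly regular with respect to $f_\eps=f-\eps$, so that $\partial f_\eps=\partial f$ and $0\not\in\partial f(x)$ on $\bd K_\eps$. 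Since $0\in K$ gives $f(0)\<0<\eps$, we have $0\in\Int K_\eps$. Condition \eqref{epilevel} says $\dom(f)\setminus\Int K_\eps$ is symmetric; as $x\mapsto -x$ is a homeomorphism it preserves boundaries, and the relative boundary of this set is precisely $\bd K_\eps$, whence $\bd K_\eps=-\bd K_\eps$. Finally $K=\bigcap_{0<\eps}K_\eps$, which will control the viability of the limit trajectory.

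Next I would propagate the polar-cone tangency from $\bd K$ to the whole family $\{\bd K_\eps\}$ through the Kuratowski lower limit; this is the heart of the matter and the step I expect to be the main obstacle. Assume \eqref{liminf} (the case \eqref{-liminf} is its mirror image, obtained by replacing $f$ with $-f$, exactly as \eqref{f} and \eqref{-f} are treated in Theorem \ref{th1}). For $x\in\bd K$ I pick a measurable selection $w_x(t)\in F(t,x)\cap\Liminf_{y\to x,\,y\notin K}\partial f(y)^\circ$. By the very definition of the lower limit, $w_x(t)$ is approximable by vectors of $\partial f(y)^\circ$ as $y\to x$ with $y\notin K$; in particular, for $y\in\bd K_\eps$ close enough to $x$ (such $y$ lie outside $K$) the number $f^\circ(y;w_x(t))$ becomes arbitrarily small. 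This is exactly the input that lets the tangential construction of \cite[Th.1.6.]{bader} reproduce, with $f_\eps$ in place of $f$ and on $\bd K_\eps$, a measurable $v_x^n(t)\in B(F(t,x),1/n)$ with $f_\eps^\circ(y;v_x^n(t))<0$ on a ball about $x$; on the inner shell $K_\eps\setminus K$, where $F$ is not given, I would simply take bounded measurable selections of $F$ at nearest points of $K$, since there only boundedness is required. Assembling the Carath\'eodory fields $W_n$ and their Scorza-Dragoni--Dugundji continuous approximants $W_n^k$ exactly as in the proof of Theorem \ref{th1} then renders $\Int K_\eps$ a bound set for $\dot x=W_n^k(t,x)$ in the sense of Proposition \ref{boundsetprop}.

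With this in hand I would solve the antiperiodic problem on $K_\eps$ by applying Theorem \ref{th1} with the boundary map $g(u,v):=u+v$, so that $g(x(0),x(T))=0$ is the antiperiodic condition. On the diagonal $g(x,x)=2x$, whence $0\not\in g(\cdot,\cdot)(\bd K_\eps)$ (because $0\in\Int K_\eps$), and the Brouwer degree of the linear isomorphism $x\mapsto 2x$ on $\Int K_\eps$ equals $1\neq 0$, giving \eqref{warprop2}; the boundary compatibility \eqref{boundarycond} holds because $x(0)=-x(T)$ together with $\bd K_\eps=-\bd K_\eps$ forces the equivalence $x(0)\in\bd K_\eps\Leftrightarrow x(T)\in\bd K_\eps$. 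Theorem \ref{th1} then produces, after its internal $n$- and $k$-limits, an antiperiodic trajectory $x_\eps$ viable in $K_\eps$ with $\dot x_\eps(t)\in F(t,x_\eps(t))$ a.e.

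Finally I would let $\eps\to 0^+$. The bound $(\F_4)$ makes $\{x_\eps\}$ equibounded and equicontinuous and $\{\dot x_\eps\}$ weakly $L^1$-precompact, so by \cite[Th.0.3.4.]{aubin} a subsequence satisfies $x_\eps\to x$ uniformly and $\dot x_\eps\rightharpoonup\dot x$ weakly. Continuity passes the condition $x_\eps(0)=-x_\eps(T)$ to $x(0)=-x(T)$; from $f(x_\eps(t))\<\eps\to 0$ and $K=\bigcap_\eps K_\eps$ we obtain $x(I)\subset K$; and the convergence theorem (Theorem \ref{convergence}), applied to the containments $\dot x_\eps(t)\in\overline{\co}B(F(t,B(x_\eps(t),r_\eps)\cap K),\delta_\eps)$ with $r_\eps,\delta_\eps\to 0$, yields $\dot x(t)\in F(t,x(t))$ a.e. Thus $x$ solves \eqref{anti}. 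The delicate point throughout remains the one flagged above: the uniform propagation of the tangency across the whole family $\{\bd K_\eps\}$ through the lower limit, together with the manufacture of a single bounded Carath\'eodory field on $I\times K_\eps$ — defined also on the shell $K_\eps\setminus K$ — that is tangent on $\bd K_\eps$ yet close enough to $F$ over $K$ to survive the triple $n,k,\eps$ limit.
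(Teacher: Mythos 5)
Your overall strategy is exactly the paper's: replace $K$ by the strongly regular sublevel sets $K_{1/n}$, transfer the tangency from $\bd K$ to $\bd K_{1/n}$ through the Kuratowski lower limit, invoke Theorem \ref{th1} for the antiperiodic operator $g(u,v)=u+v$ (with $\deg(x\mapsto 2x,\Int K_{1/n},0)=1$ and condition \eqref{boundarycond} supplied by \eqref{epilevel}), and pass to the limit via \cite[Th.0.3.4.]{aubin} and Theorem \ref{convergence}. So the architecture is right; the two points below are where your sketch, taken literally, would not go through.

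First, the claim that on the shell $K_\eps\setminus K$ ``only boundedness is required'' is not tenable. Theorem \ref{th1} is applied with $K_\eps$ as the constraint set, so the right-hand side must satisfy all of $(\F_1)$--$(\F_4)$ (in particular upper semicontinuity and convexity of values) on the whole of $I\times K_\eps$, and its tangency hypothesis \eqref{f} is imposed precisely at points of $\bd K_\eps$, which lie in the shell; hence the tangent selections you manufacture there must actually be selections of the extended multimap. A pointwise choice of ``bounded measurable selections at nearest points of $K$'' gives neither. The paper resolves this by first extending $F$ to $\w{F}$ on $I\times\R{N}$ through a Dugundji system (preserving $(\F_1)$--$(\F_4)$) and then enlarging to the graph approximations $F_m$ of \eqref{Fmform}; the $\frac1m$-fattening is exactly what absorbs the perturbation needed to move a selection of $F(\cdot,y)$, $y\in\bd K$, into $\partial f(x)^\circ$ at a nearby $x\notin K$. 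Second, the step you flag as the main obstacle is indeed where the real work lies, and it is not a formality: membership of $w_y(t)$ in $\Liminf_{x\to y,\,x\notin K}\partial f(x)^\circ$ only yields $f^\circ(x;w_y(t))\<L|w_y(t)-v|$ for suitable $v\in\partial f(x)^\circ$, i.e.\ ``small'', not ``$<0$''; one lands \emph{exactly} in $\partial f(x)^\circ$ only after an $\frac{r_m}{2}$-adjustment of the selection, which is admissible only because $F_m$ has been fattened --- so the enlargement and the propagation cannot be decoupled as your sketch suggests. Moreover the radius must be uniform over $\bd K$: the paper's claim \eqref{v_x,y} obtains a single $\delta_m>0$ by approximating each measurable $w_y$ by a simple function $u_y$, deriving the local estimate \eqref{u_y} on $B(y,\delta_y)\setminus K$, and then taking a finite subcover of the compact boundary; only then can one choose $n_m$ with $K_{1/n_m}\subset B(K,\delta_m)$ so that \emph{every} point of $\bd K_{1/n_m}$ inherits the tangency. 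With these two repairs your argument becomes the paper's proof.
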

\begin{proof}
Making use of a Dugundji system $\{U_s,a_s\}_{s\in S}$ for the complement $\R{N}\setminus K$ one defines 
\[\w{F}(t,x):=
\begin{cases}
F(t,x)&\text{for }(t,x)\in I\times K,\\
\sum\limits_{s\in S}\mu_s(x)F(t,a_s)&\text{for }(t,x)\in I\times(\R{N}\setminus K),
\end{cases}\]
where $\{\mu_s\}_{s\in S}$ constitutes a locally finite partition of unity inscribed into the covering $\{U_s\}_{s\in S}$. This method of extension of the map $F$ ensures fulfillment of assumptions $(\F_1)$-$(\F_4)$ also at points outside the set $K$.\par Relying on locally Lipschitz partition of unity $\left\{\lambdaup_y^m\colon\R{N}\to[0,1]\right\}_{y\in\R{N}}$ subordinated to the covering $\{B(y,r_m)\}_{y\in\R{N}}$ of the Euclidean space $\R{N}$ we may introduce, for any $m\geqslant 1$, a set-valued approximation $F_m\colon I\times\R{N}\map\R{N}$ by the formula
\begin{equation}\label{Fmform}
F_m(t,x):=\sum_{y\in\R{N}}\lambdaup^m_y(x)\,\overline{\co}B\left(\w{F}(t,B(y,2r_m)),\frac{1}{m}\right),
\end{equation}
where $r_m:=3^{-m}$. Stands to reason, the maps $F_m$ also satisfy conditions $(\F_1)$-$(\F_4)$ (being bounded by a constant $c+\frac{1}{m}$). Multimaps $F_m$ approximate the values of the mapping $\w{F}$ in such a way that
\begin{equation}\label{F_m}
\w{F}(t,x)\subset F_{m+1}(t,x)\subset F_m(t,x)\subset\overline{\co}B\left(\w{F}(t,B(x,3r_m)),\frac{1}{m}\right)
\end{equation} 
for $m\geqslant 1$ and every $(t,x)\in I\times\R{N}$.
\par\noindent We claim that assumption \eqref{liminf} (assumption \eqref{-liminf}) implies
\begin{equation}\label{v_x,y}
\begin{aligned}
&\text{for any }m\geqslant 1\text{, there is }\delta_m>0\text{ such that, for any }y\in\bd K\text{ and }x\in B(y,\delta_m)\setminus K,\\&\text{there is a measurable }v_{y,x}\colon I\to\R{N}\text{ such that }v_{y,x}(t)\in F_m(t,x)\cap\partial f(x)^\circ\text{ a.e. on }I\\&(v_{y,x}(t)\in F_m(t,x)\cap\partial(-f)(x)^\circ\text{ a.e. on }I\text{ respectively)}
\end{aligned}
\end{equation}
Fix $m\geqslant 1$ and $y\in\bd K$. Using the assumptions \eqref{liminf} we choose a measurable mapping $w_y\colon I\to\R{N}$ such that $w_y(t)\in F(t,y)\cap\Liminf_{x\to y,x\not\in K}\partial f(x)^\circ$ everywhere on a set $I_0$ of full measure in the interval $I$. Let $u_y\colon I\to\R{N}$ be a simple function such that $u_y(I)\subset w_y(I_0)$ and $u_y(t)\in B\left(w_y(t),\frac{r_m}{2}\right)$ on $I_0$. Clearly, there is $\delta_y\in(0,r_m)$, such that
\begin{equation}\label{u_y}
u_y(t)\in B\left(\partial f(x)^\circ,\frac{r_m}{2}\right)\;\;\;\text{for any }(t,x)\in I\times(B(y,\delta_y)\setminus K).
\end{equation}
Since $\bd K$ is compact, there are points $y_1,\ldots,y_k$ such that $\bd K\subset\bigcup_{i=1}^kB\left(y_i,\frac{\delta_{y_i}}{2}\right)$. Put $\delta_m:=\min\left\{\frac{\delta_{y_i}}{2}\colon i=1,\dots,k\right\}$. Let $y_{i_0}\in\bd K$ be such that $y\in B\left(y_{i_0},\frac{\delta_{y_{i_0}}}{2}\right)$. Take $x\in B(y,\delta_m)\setminus K$. Then $x\in B\left(y_{i_0},\delta_{y_{i_0}}\right)\setminus K$ and, by \eqref{u_y}, for all $t\in I$ \[B\left(u_{y_{i_0}}(t),\frac{r_m}{2}\right)\cap\partial f(x)^\circ\neq\varnothing.\] Evidently, there exists a measurable $u_{y,x}\colon I\to\R{N}$ satisfying $u_{y,x}(t)\in B\left(u_{y_{i_0}}(t),\frac{r_m}{2}\right)\cap\partial f(x)^\circ$ a.e. on $I$. Recall that if $x\in\supp\lambdaup_z^m$, then $|x-z|<r_m$. Hence, \[u_{y,x}(t)\in B\left(w_{y_{i_0}},r_m\right)\subset B\left(\w{F}(t,y_{i_0}),r_m\right)\subset B\left(\w{F}\left(t,B\left(y,\frac{r_m}{2}\right)\right),r_m\right)\subset\overline{\co}B\left(\w{F}(t,B(z,2r_m)),\frac{1}{m}\right)\] a.e. on $I$. Observe that \[\sum_{z\in\R{N}}\lambdaup_z^m(x)u_{y,x}(t)\in F_m(t,x)\cap\partial f(x)^\circ\] a.e. on $I$. Taking as $v_{y,x}$ the combination $\sum_{z\in\R{N}}\lambdaup_z^m(x)u_{y,x}$, we justify \eqref{v_x,y}.\par Denote by $K_n$ the strongly regular compact sublevel set $K_\frac{1}{n}=\left\{x\in\overline{\Omega}\colon f_\frac{1}{n}(x)\<0\right\}$, introduced in the discussion preceding the formulation of present theorem. Let $m\geqslant 1$ be arbitrary. Now choose $\delta_m>0$ as in claim \eqref{v_x,y}. Since $K=\bigcap_{n=1}^\infty K_n$, there is $n_m\geqslant m$ such that $K_{n_m}\subset B\left(K,\delta_m\right)$. Put $F_{n_m}:=\res{F_m}{I\times K_{n_m}}$. So defined multimap $F_{n_m}$ satisfies assumptions of Theorem \ref{th1}. In particular, one of the tangency conditions \eqref{f}-\eqref{-f} is met. Indeed, if $x\in\bd K_{n_m}$, then there is $y\in\bd K$ such that $x\in B(y,\delta_m)\setminus K$. Thus, in view of \eqref{v_x,y} either the multimap $F_{n_m}(\cdot,x)\cap\partial f_{\frac{1}{n_m}}(x)^\circ$ or $F_{n_m}(\cdot,x)\cap\partial(-f_{\frac{1}{n_m}})(x)^\circ$ possesses a measurable selection.\par In order to apply the thesis of Theorem \ref{th1}. we must verify assumptions \eqref{warprop2} and \eqref{boundarycond}. In our case the boundary operator $g$ satisfies $g(x,y)=x+y$. Take $x\in C^1\left(I,K_{n_m}\right)$ such that $g(x(0),x(T))=0$. Then
\begin{align*}
x(0)\in\bd K_{n_m}&\Leftrightarrow x(0)\in\dom(f)\setminus\Int K_{n_m}\stackrel{\text{\scriptsize{by \eqref{epilevel}}}}{\Longleftrightarrow}-x(0)\in\dom(f)\setminus\Int K_{n_m}\\&\Leftrightarrow x(T)\in\dom(f)\setminus\Int K_{n_m}\Leftrightarrow x(T)\in\bd K_{n_m}.
\end{align*}
Therefore, condition \eqref{boundarycond} is satisfied on the assumption of symmetry with respect to the origin of superlevel sets $f^{-1}([\eps,+\infty))$.\par Due to the assumption $0\in K$, we see that $f(0)<\frac{1}{n_m}$, i.e. $0\not\in\bd K_{n_m}$. Since $g(x_0,x_0)=2x_0$, it is apparent that $0\not\in g(\cdot,\cdot)\left(\bd K_{n_m}\right)$. It is also clear that \[|g(x_0,x_0)-x_0|<|g(x_0,x_0)|+|x_0|\] for any $x_0\in\bd K_{n_m}$. This inequality is a simple reformulation of the Poincar\'e-Bohl condition \cite[Th.2.1.]{zabreiko}, which means that vector fields $g(\cdot,\cdot)$ and $id_{K_{n_m}}$ are joined by the linear homotopy, nonsingular on the boundary of $K_{n_m}$. The homotopy invariance of the Brouwer degree entails
\[\deg\left(g(\cdot,\cdot),\Int K_{n_m},0\right)=\deg\left(id_{K_{n_m}},\Int K_{n_m},0\right)=1\neq 0,\] as $0\in K$. At this point we can invoke Theorem \ref{th1}., stating that the following constrained antiperiodic problem 
\begin{equation}\label{K_nproblem}
\begin{aligned}
&x'\in F_{n_m}(t,x),\;\text{a.e. on }I,x\in K_{n_m}\\
&x(0)=-x(T),
\end{aligned}
\end{equation}
possesses a solution.\par Denote by $x_m$ a solution of the antiperiodic problem \eqref{K_nproblem}. Since \[|x_m(t)|\<|x_m(0)|+\int_0^t|\dot{x}_m(s)|\,ds\<\sup_{y\in\overline{\Omega}}|y|+T(c+1)<+\infty\] on $I$ and $|\dot{x}_m(t)|\<c+1$ a.e. on $I$, by passing to a subsequence if necessary, we can assume that $(x_m)_{m\geqslant 1}$ converges uniformly to some absolutely continuous $x\colon I\to\R{N}$. Concurrently, $\dot{x}_m\rightharpoonup\dot{x}$ in $L^1(I,\R{N})$ (compare \cite[Th.0.3.4.]{aubin}). From \eqref{F_m} it follows that $\dot{x}_m(t)\in\overline{\co}\,B\left(\w{F}(t,B(x_m(t),3r_m)),\frac{1}{m}\right)$ a.e. on $I$. Thus, in view of Theorem \ref{convergence}. $\dot{x}(t)\in\w{F}(t,x(t))$ for a.a. $t\in I$. Considering that $x_m(I)\subset K_{n_m}$, it is evident that $x(t)\in K$ for all $t\in I$. Therefore, $x$ is the required solution to problem \eqref{anti}. 
\end{proof}
\begin{corollary}
Let $K$, represented by $f$, be a compact and strictly regular set containing origin and satisfying condition \eqref{epilevel}. Assume that $F\colon I\times K\map\R{N}$ fulfills $(\F_1)$-$(\F_4)$ and that either \eqref{f} or \eqref{-f} holds. Then the antiperiodic problem \eqref{anti} possesses at least one solution. In particular, if $K$ is represented by its distance function, then the tangency condition \eqref{f} is equivalent to \eqref{fepi}, while condition \eqref{-f} is equivalent to  
\[\forall\,x\in\bd K\;\;\;F(\cdot,x)\cap -C_K(x)\text{ has a measurable selection}.\]
\end{corollary}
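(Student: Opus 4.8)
The plan is to derive this corollary straight from Theorem \ref{ant}, so the work reduces to checking the hypotheses of that theorem. As noted after Definition \ref{K}, any strictly regular set is regular, so $K$ is a compact regular set containing the origin; condition \eqref{epilevel} and $(\F_1)$--$(\F_4)$ are assumed outright. Thus the only substantial point is that the tangency condition \eqref{f} forces \eqref{liminf} (and, symmetrically, \eqref{-f} forces \eqref{-liminf}). For this it suffices to establish, for every $x\in\bd K$, the cone inclusion
\[\partial f(x)^\circ\subset\underset{y\to x,\,y\notin K}{\Liminf}\,\partial f(y)^\circ,\]
since then any measurable selection of $F(\cdot,x)\cap\partial f(x)^\circ$ is automatically a selection of $F(\cdot,x)\cap\Liminf_{y\to x,\,y\notin K}\partial f(y)^\circ$, which is exactly \eqref{liminf}.

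To prove the inclusion, fix $x\in\bd K$ and $v\in\partial f(x)^\circ$, so that $f^\circ(x;v)\<0$. Let $(y_n)_\n$ be any sequence with $y_n\to x$ and $y_n\notin K$; I must produce $v_n\in\partial f(y_n)^\circ$ with $v_n\to v$. Strict regularity supplies $\alpha>0$ and $\rho>0$ with $|z|\geqslant\alpha$ for all $z\in\partial f(y)$ and all $y\in B(x,\rho)\setminus K$, while upper semicontinuity of $\partial f$ bounds these gradients by some $\beta$ near $x$. The crucial device is the \emph{minimal-norm} element $\bar z_n\in\partial f(y_n)$: because $\partial f(y_n)$ is compact, convex and avoids $0$, the projection inequality gives $\langle\bar z_n,p\rangle\geqslant|\bar z_n|^2\geqslant\alpha^2$ for all $p\in\partial f(y_n)$, so, $f^\circ(y_n;\cdot)$ being the support function of $\partial f(y_n)$, the direction $d_n:=-\bar z_n$ satisfies $f^\circ(y_n;d_n)\<-\alpha^2$ with $|d_n|\<\beta$.

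I then set $v_n:=v+t_nd_n$ with $t_n\to 0^+$ chosen below. Subadditivity and positive homogeneity of $f^\circ(y_n;\cdot)$ yield
\[f^\circ(y_n;v_n)\<f^\circ(y_n;v)+t_nf^\circ(y_n;d_n)\<f^\circ(y_n;v)-t_n\alpha^2.\]
Upper semicontinuity of $(x,v)\mapsto f^\circ(x;v)$ gives $\limsup_nf^\circ(y_n;v)\<f^\circ(x;v)\<0$, so writing $\gamma_n:=\max\{f^\circ(y_n;v),0\}\to 0$ and taking $t_n:=\gamma_n/\alpha^2+1/n$, one gets $f^\circ(y_n;v_n)\<-\alpha^2/n<0$, i.e. $v_n\in\partial f(y_n)^\circ$, while $|v_n-v|\<\beta t_n\to 0$. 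This proves the inclusion. The same computation applies verbatim to $-f$, since $\partial(-f)(y)=-\partial f(y)$ inherits the bound $\inf_{z\in\partial(-f)(y)}|z|\geqslant\alpha$, giving \eqref{-f}$\Rightarrow$\eqref{-liminf}. With one of \eqref{liminf}, \eqref{-liminf} secured, Theorem \ref{ant} delivers a solution of \eqref{anti}.

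For the final assertion, when $K$ is represented by $d_K$ the identity $\partial d_K(x)^\circ=C_K(x)$ recorded in Section 2 turns \eqref{f} into \eqref{fepi}; moreover $\partial(-d_K)(x)^\circ=(-\partial d_K(x))^\circ=-\partial d_K(x)^\circ=-C_K(x)$, so \eqref{-f} becomes the stated condition on $-C_K(x)$. I expect the cone inclusion to be the main obstacle: the naive estimate $f^\circ(y;d)\<f^\circ(x;d)+\eps|d|$ coming from upper semicontinuity is useless when $0\in\partial f(x)$, in which case $\partial f(x)^\circ$ has no direction with $f^\circ(x;\cdot)<0$ at all, and it is precisely the uniform lower bound $|\bar z_n|\geqslant\alpha$ from strict regularity, exploited through the minimal-norm descent direction $d_n$, that rescues the argument.
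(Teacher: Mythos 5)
Your proposal is correct and follows the route the paper intends: reduce to Theorem \ref{ant} by noting that strict regularity implies regularity and that \eqref{f} (resp. \eqref{-f}) forces \eqref{liminf} (resp. \eqref{-liminf}) via the cone inclusion $\partial f(x)^\circ\subset\Liminf_{y\to x,\,y\notin K}\partial f(y)^\circ$, which is exactly the step the paper delegates to the Bader--Kryszewski argument (cf. the citation of \cite[Cor.1.12]{bader} in the proof of Theorem \ref{th2}). Your minimal-norm descent direction $d_n=-\bar z_n$, with $f^\circ(y_n;d_n)\<-\alpha^2$ supplied by strict regularity, is a valid self-contained proof of that inclusion, and the closing identification $\partial(-d_K)(x)^\circ=-C_K(x)$ is correct.
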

\begin{remark}
Observe that if $\bd K$ is symmetric, then the distance function $\res{d_K}{K^c}$ is even. Therefore, if $K$ is a constraint set represented by its distance function $d_K$, then assumption \eqref{epilevel} is fulfilled, in particular, when the boundary $\bd K$ of this set is symmetric.  
\end{remark}
\begin{remark}
Unfortunately, the tangency conditions \eqref{liminf}-\eqref{-liminf} are rather restrictive, which can make problematic the demonstration of the existence of antiperiodic solutions to ODE's with topologically ``thin'' set of state constraints. In other words, Theorem \ref{ant}. is not fully satisfactory, what the following common example makes us aware of $($comp. \cite[p.203]{aubin}$)\!:$ Let $K:=S_1\cup S_{-1}$, where
\[S_i:=\left\{z=(x,y)\in\R{2}\colon (x-i)^2+y^2=1\right\}.\] Put
\[g(x,y):=
\begin{cases}
(y,1-x)&\text{for }(x,y)\in S_1,\\
(-y,1+x)&\text{for }(x,y)\in S_{-1}.
\end{cases}\]
Then, for all $z\in K$, $g(z)\in T_K(z)=C_K(z)$. Moreover, for every $z\in K\setminus\{(0,0)\}$ there is $\delta>0$ such that $\res{d_K}{B(z,\delta)}=\res{d_{S_i}}{B(z,\delta)}$. Taking into account that $S_i$ is also strictly regular, we see that \[g(z)\in C_K(z)=\partial d_K(z)^\circ=\partial d_{S_i}(z)^\circ\subset\underset{u\to z,u\notin S_i}{\Liminf}\,\partial d_{S_i}(u)^\circ=\underset{u\to z,u\notin K}{\Liminf}\,\partial d_K(u)^\circ.\] Actually, \[g(z)=g(-z)\in\underset{u\to -z,u\notin K}{\Liminf}\,\partial d_K(u)^\circ=\underset{u\to -z,u\notin K}{\Liminf}\,\partial(-d_K)(-u)^\circ=\underset{u\to z,u\notin K}{\Liminf}\,\partial(-d_K)(u)^\circ,\] for $g$ and $d_K$ are even. However, on the other hand \[g(0,0)\not\in\underset{u\to(0,0),u\notin K}{\Liminf}\,\partial d_K(u)^\circ=\underset{u\to(0,0),u\notin K}{\Liminf}\,\partial(-d_K)(u)^\circ.\] Therefore, Theorem \ref{ant}. does not confirm the obvious observation that the autonomous ODE $\dot{x}(t)=g(x(t))$, for $t\in I$, possesses antiperiodic solutions $($there are exactly two, starting from each point $z\in K)$.
\end{remark}

In a situation where the set of state constraints possesses extremely simplified geometry, i.e. it is disc shaped, then the tangency conditions can be formulated in such a way as it was done in the following theorem.
\begin{theorem}\label{th2}
Suppose $F\colon I\times\R{N}\map\R{N}$ satisfies conditions $(\F_1)$-$(\F_4)$, with $K:=\R{N}$. Moreover, assume that there exists $r>0$ such that either
\begin{equation}\label{F_5'}
\forall\,x\in\bd D(0,r)\;\;\;F(\cdot,x)\cap \{x\}^\circ\text{ possesses a measurable selection}
\end{equation}
or
\begin{equation}\label{-F_5'}
\forall\,x\in\bd D(0,r)\;\;\;F(\cdot,x)\cap \{-x\}^\circ\text{ possesses a measurable selection.}
\end{equation}
Let $g\colon C\left(I,\R{N}\right)\to\R{N}$ be a continuous function, which maps bounded sets into bounded sets and there exists $\eps_0>0$ such that the following conditions hold:
\begin{itemize}
\item[(i)] $\forall\,x\in C^1(I,B(0,r+\eps_0))\;\;\;[x(0)=g(x)]\Rightarrow\exists\,t\in(0,T]\;\;|g(x)|\<|x(t)|$,
\item[(ii)] $|g(i(x))|\<|x|\;$ for all $x\in B(0,r+\eps_0)\setminus D(0,r)$,
\item[(iii)] $\fix(g\circ i)\cap(B(0,r+\eps_0)\setminus D(0,r))=\varnothing$.
\end{itemize}
Then the nonlocal Cauchy problem 
\begin{equation}\label{uncons}
\begin{gathered}
x'\in F(t,x),\;\text{a.e. on }I,\\
x(0)=g(x)
\end{gathered}
\end{equation}
possesses at least one solution $x$ such that $x(t)\in D(0,r)$ for all $t\in I$.
\end{theorem}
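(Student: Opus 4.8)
The plan is to treat the closed ball $D(0,r)$ as the viability domain and to notice that, with the representing function $f(x):=|x|-r$, one has $\partial f(x)=\{x/r\}$, hence $\partial f(x)^\circ=\{x\}^\circ$ and $\partial(-f)(x)^\circ=\{-x\}^\circ$ for $x\in\bd D(0,r)$; thus \eqref{F_5'} is exactly the tangency condition \eqref{f}, and \eqref{-F_5'} is exactly \eqref{-f}, for this particular $f$. Since $D(0,r)$ is compact and convex it is strongly regular, so the entire approximation scheme from the proof of Theorem \ref{th1}.\ is available. Accordingly, first I would reproduce, from \cite{bader} and formulas \eqref{bound}--\eqref{<W_n^k}, the single-valued Carath\'eodory approximations $W_n^k\colon I\times\R{N}\to\R{N}$ of $F$: they are bounded by $c+\tfrac1n$, arise as a partition-of-unity average of near-selections of $F$, and are strictly transversal to $\bd D(0,r)$ throughout a neighbourhood (shell) of $\bd D(0,r)$, in the sense that $f^\circ(x;W_n^k(t,x))<0$ under \eqref{F_5'} (inward field), respectively $f^\circ(x;-W_n^k(t,x))<0$ under \eqref{-F_5'} (outward field), for all $t\in I$.

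The heart of the argument is an application of the continuation theorem, Theorem \ref{maw}., with the boundary functional $\gamma:=\ev_0-g$ (so that $\gamma(x)=0$ encodes precisely $x(0)=g(x)$) and with $h:=W_n^k$. I would fix a small $\eps\in(0,\eps_0)$, shrinking it below the transversality radius of the shell if necessary, and take $\Omega:=C(I,B(0,r+\eps))$, so that $\w{\Omega}=B(0,r+\eps)$ and $\gamma\circ i=id-g\circ i$ on $\R{N}$. Conditions (ii) and (iii) of Theorem \ref{maw}.\ are then immediate: by condition (iii) of the present theorem $g\circ i$ has no fixed point on the sphere $\{|x|=r+\eps\}\subset B(0,r+\eps_0)\setminus D(0,r)$, which is assumption (ii) of Theorem \ref{maw}.; and since condition (ii) here gives $|g(i(x))|\<|x|$ on that sphere, the linear homotopy $H(\vartheta,x)=x-\vartheta\,g(i(x))$ is nonsingular there (a zero would force a boundary fixed point), so the Poincar\'e--Bohl principle \cite[Th.2.1.]{zabreiko} yields $\deg(\gamma\circ i,\w{\Omega},0)=\deg(id,B(0,r+\eps),0)=1\neq0$.

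The substantial step is hypothesis (i) of Theorem \ref{maw}., the a priori bound $x\not\in\bd\Omega$. Here I would run the bounding-function argument of Proposition \ref{boundsetprop}.\ in its sharpened, shell form: if $x\in\overline\Omega$ solves $\dot x=\lambdaup W_n^k(\cdot,x)$ with $x(0)=g(x)$, then along the part of the trajectory lying in the shell the strict transversality forces $(f\circ x)'$ to keep a fixed sign, exactly as in the a.e.\ computation with the mollifications $x_n=x+z_n$ used for claims \eqref{claim1}--\eqref{claim2}. In the inward case this excludes any interior or terminal maximum of $|x|$ at level $\geqslant r$, and the remaining possibility that $\max|x|$ be attained only at $t=0$ is killed by condition (i), which supplies a second point $t^*\in(0,T]$ with $|x(0)|\<|x(t^*)|$ that must then also be a maximum in the shell, giving the same contradiction; hence $x(I)\subset B(0,r)$, so $\max|x|<r+\eps$ and $x\not\in\bd\Omega$. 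The hard part will be the outward alternative \eqref{-F_5'}: there the field points outward at $\bd D(0,r)$ and the shell monotonicity alone permits a maximum of $|x|$ at the terminal time $t=T$, so excluding that configuration is precisely where the interplay between the nonlocal relation $x(0)=g(x)$ and the scalar estimate (i) must be exploited, and this is the only genuinely delicate point.

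With (i)--(iii) of Theorem \ref{maw}.\ in force, for every $n,k$ I obtain a solution $x_{n,k}\in\overline\Omega$ of $\dot x=W_n^k(\cdot,x)$, $x(0)=g(x)$, with $x_{n,k}(I)\subset D(0,r)$. The proof then closes by the same two-stage limiting process as in Theorem \ref{th1}.: letting $k\to\infty$ for fixed $n$, the integral estimate comparing $W_n^k$ with $W_n$ together with the compactness theorem \cite[Th.0.3.4.]{aubin} produces a Carath\'eodory solution $x_n$ of $\dot x=W_n(\cdot,x)$, $x(0)=g(x)$ in $D(0,r)$; and letting $n\to\infty$, the inclusion $\dot x_n(t)\in\overline{\co}\,B(F(t,B(x_n(t),3^{-n+1})),\tfrac1n)$ inherited from \eqref{Fmform}--\eqref{F_m} feeds the convergence theorem, Theorem \ref{convergence}., to yield a limit $x\in AC(I,\R{N})$ with $\dot x(t)\in F(t,x(t))$ a.e., $x(0)=g(x)$ and $x(I)\subset D(0,r)$, which is the asserted solution of \eqref{uncons}.
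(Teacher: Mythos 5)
Your proof follows essentially the same architecture as the paper's: inflate the disc to $D(0,r+\eps)$ (the paper uses $D(0,r+\tfrac{1}{n_m})$), run the continuation theorem (Theorem \ref{maw}) with $\gamma=\ev_0-g$ on $\Omega=C(I,B(0,r+\eps))$, get the degree from the Poincar\'e--Bohl homotopy via (ii)--(iii), and close with the same two-stage limit. Two points, one minor and one substantial. The minor one: your ``shell transversality'' of $W_n^k$ is asserted, not proved. The inequality \eqref{<W_n^k} as established in the proof of Theorem \ref{th1} holds only at points of $\bd K$ itself, while your a priori bound needs the sign condition on all spheres of radius in $[r,r+\eps]$, in particular on $\bd D(0,r+\eps)$ where only the hypothesis at $\bd D(0,r)$ is available. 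This is exactly why the paper routes through the $\Liminf$ tangency condition and claim \eqref{v_x,y} of Theorem \ref{ant} (for the ball this works because $\partial d_{D(0,r)}(y)^\circ=\{y\}^\circ$ for $y\notin D(0,r)$, so the polar cones vary continuously as $y\to x$); you must make this step explicit rather than citing the Theorem \ref{th1} construction, which does not deliver it.

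The substantial issue is the one you name and then leave open: the a priori bound under the outward alternative \eqref{-F_5'}. This is a genuine gap, not a deferred technicality. Under \eqref{-F_5'} one has $\langle x(t),\dot x(t)\rangle>0$ whenever $|x(t)|\in[r,r+\eps]$, so Proposition \ref{boundsetprop} excludes touching $\bd D(0,r+\eps)$ at interior times and the argument of claim \eqref{claim2} excludes $|x(0)|=r+\eps$; but nothing prevents a $\lambdaup$-solution from entering the shell and increasing monotonically until $|x(T)|=r+\eps$ exactly at the terminal time, which places $x$ on $\bd\Omega$. None of your tools rules this configuration out: assumption (i) only bounds $|x(0)|$ above by some $|x(t^*)|$ with $t^*\in(0,T]$ (trivially satisfied by $t^*=T$ here), and (ii)--(iii) concern constant functions only. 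The paper's verification is likewise confined to the evaluation at $0$ (its displayed sufficient condition involves only $\ev_0$ of solutions with $x((0,T])\subset\Int D_m$, which meshes with claim \eqref{claim1} in the inward case), so you cannot discharge the outward endpoint $t=T$ by appealing to ``the interplay between $x(0)=g(x)$ and (i)'' --- an actual argument at $t=T$ is required. As written, your proposal establishes the theorem only under \eqref{F_5'}.
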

\begin{proof}
The disc $D(0,r)$ represented by its distance function $d_{D(0,r)}\colon\R{N}\to\R{}$ is strictly regular. Unfortunately, it is not strongly regular within the meaning of Definition \ref{K}. However, the sublevel set $D(0,r+\eps)=\left\{x\in\R{N}\colon d_{D(0,r)}(x)\<\eps\right\}$ is compact  and strongly regular, for every $\eps\in(0,\eps_0)$. The closed half-space $\{x\}^\circ=\{y\in\R{N}\colon\langle x,y\rangle\<0\}$ is nothing more than the cone $S_{\!D(0,r)}(x):=\overline{\bigcup_{h>0}\frac{1}{h}D(-x,r)}$ tangent to the convex subset $D(0,r)$ at $x$. The latter stands for the common value of the Bouligand contingent and Clarke tangent cones, i.e. $S_{\!D(0,r)}(x)=T_{\!D(0,r)}(x)=C_{D(0,r)}(x)$ (see \cite[Prop.4.2.1]{fran}). On the other hand it is well known that $C_K(x)=\partial d_K(x)^\circ$. Thus, the tangency condition \eqref{F_5'} is equivalent to
\begin{equation}\label{tangcon1}
\forall\,x\in\bd D(0,r)\;\;\;F(\cdot,x)\cap\partial d_{D(0,r)}(x)^\circ\text{ has a measurable selection.}
\end{equation}
Bearing in mind that $\{-x\}^\circ=-\{x\}^\circ=-\,\partial d_{D(0,r)}(x)^\circ=\partial(-d_{D(0,r)})(x)^\circ$ there is also an equivalence between the tangency condition \eqref{-F_5'} and 
\begin{equation}\label{tangcon2}
\forall\,x\in\bd D(0,r)\;\;\;F(\cdot,x)\cap\partial(-d_{D(0,r)})(x)^\circ\text{ has a measurable selection.}
\end{equation}
Referring to an argument contained in the proof of \cite[Cor.1.12]{bader} we find that \eqref{tangcon1} entails
\[\forall\,x\in\bd D(0,r)\;\;\;F(\cdot,x)\cap\underset{y\to x,\,y\notin D(0,r)}{\Liminf}\,\partial d_{D(0,r)}(y)^\circ\text{ has a measurable selection,}\] while \eqref{tangcon2} implies \[\forall\,x\in\bd D(0,r)\;\;\;F(\cdot,x)\cap\underset{y\to x,\,y\notin D(0,r)}{\Liminf}\,\partial(-d_{D(0,r)})(y)^\circ\text{ has a measurable selection.}\]
\par Let $F_m\colon I\times\R{N}\map\R{N}$ be given by the formula \eqref{Fmform}. Retaining the notation and the course of reasoning of the proof of Theorem \ref{ant}. it is easy to convince oneself that the nonlocal Cauchy problem 
\begin{equation}\label{ring}
\begin{aligned}
&x'\in F_{n_m}(t,x),\;\text{a.e. on }I,x\in D\left(0,r+\frac{1}{n_m}\right)\\
&x(0)=g(x),
\end{aligned}
\end{equation}
possesses at least one solution (for $n_m$ large enough). A closer look at Theorem \ref{maw}. and the proof of Theorem \ref{th1}. indicates that
it is sufficient to verify conditions 
\[\deg\left(id_{D_m}-g\circ\res{i}{D_m},\Int D_m,0\right)\neq 0\] and \[\ev_0\left(\left\{x\in C^1\left(I,D_m\right)\colon x((0,T])\subset\Int D_m\right\}\cap(ev_0-g)^{-1}(0)\right)\subset\Int D_m,\]
where $D_m:=D\left(0,r+\frac{1}{n_m}\right)$. Assumptions (ii)-(iii), related to the boundary operator $g$, imply \[\left|x_0-\left(g\circ\res{i}{D_m}\right)-x_0\right|<\left|\left(g\circ\res{i}{D_m}\right)-x_0\right|+|x_0|\] for every $x_0\in\bd D_m$. Therefore, the linear homotopy joining the identity $id_{D_m}$ with the vector field $id_{D_m}-g\circ\res{i}{D_m}$ is nonsingular on the sphere $\bd D_m$. As a result, \[\deg\left(id_{D_m}-g\circ\res{i}{D_m},\Int D_m,0\right)=\deg\left(id_{D_m},\Int D_m,0\right)=1\neq 0.\] Now, take $x\in C^1\left(I,D_m\right)$ such that $x((0,T])\subset\Int D_m$ and $x(0)=g(x)$. By virtue of assumption (i), there exists $t\in(0,T]$ such that $|g(x)|\<|x(t)|$. However, since $x((0,T])\subset\Int D_m$, we see that $|x(t)|<r+\frac{1}{n_m}$. Thus, $|x(0)|<r+\frac{1}{n_m}$ and eventually $x(0)\in\Int D_m$.\par If $x_m$ stands for the solution of \eqref{ring}, then there exists a uniformly convergent subsequence of $(x_m)_{m\geqslant 1}$. The limit of this subsequence constitutes the solution of problem \eqref{uncons}, for exactly the same reasons as in the proof of the preceding theorem. 
\end{proof}
\begin{remark}
Analysis of the proof of Theorem \ref{th1}. enables a more complete interpretation of the tangency condition \eqref{F_5'}. Let us point out that the distance function $d_{D(0,r)}$ plays a role of a generalized convex coercive guiding potential for the approximating function $W_n^k$ outside the ball $D(0,r)$ $($compare \cite{blasi,lewicka}$)$.
\end{remark}
\begin{corollary}
Let $0<t_1<t_2<\ldots<t_n\<T$ be arbitrary, but fixed. Assume that $\sum_{i=1}^n|\alpha_i|\<1$ and $\sum_{i=1}^n\alpha_i\neq 1$. Under the assumptions $(\F_1)$-$(\F_4)$ and \eqref{F_5'} or \eqref{-F_5'} the boundary value problem with multi-point discrete mean condition 
\begin{equation*}
\begin{gathered}
x'\in F(t,x),\;\text{a.e. on }I,\\
x(0)=\sum\limits_{i=1}^n\alpha_ix(t_i)
\end{gathered}
\end{equation*}
possesses a solution. In particular, the respective antiperiodic problem has at least one solution.
\end{corollary}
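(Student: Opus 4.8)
The plan is to invoke Theorem \ref{th2} with the boundary operator $g\colon C(I,\R{N})\to\R{N}$ given by $g(x):=\sum_{i=1}^n\alpha_i x(t_i)$. This map is evidently continuous, and since $\sum_{i=1}^n|\alpha_i|\leqslant 1$ one has $|g(x)|\leqslant\|x\|$, so $g$ sends bounded sets to bounded sets. Any $\eps_0>0$ will serve; it then remains only to check hypotheses (i)--(iii) of that theorem.

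For (i) I would fix $x\in C^1(I,B(0,r+\eps_0))$ and estimate, by the triangle inequality together with the normalization $\sum_{i=1}^n|\alpha_i|\leqslant 1$,
\[
|g(x)|=\left|\sum_{i=1}^n\alpha_i x(t_i)\right|\leqslant\left(\sum_{i=1}^n|\alpha_i|\right)\max_{1\leqslant i\leqslant n}|x(t_i)|\leqslant\max_{1\leqslant i\leqslant n}|x(t_i)|.
\]
Because every node $t_i$ lies in $(0,T]$, the index realizing this maximum produces a point $t\in(0,T]$ with $|g(x)|\leqslant|x(t)|$; note this holds for all such $x$, without even invoking $x(0)=g(x)$.

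To settle (ii) and (iii) I would compute the action of $g$ on constant paths. Writing $\sigma:=\sum_{i=1}^n\alpha_i$ and recalling $i(x)\equiv x$, one gets $g(i(x))=\sigma x$, whence $|g(i(x))|=|\sigma|\,|x|\leqslant\bigl(\sum_{i=1}^n|\alpha_i|\bigr)|x|\leqslant|x|$, which is (ii). For (iii), a fixed point of $g\circ i$ obeys $\sigma x=x$, i.e. $(\sigma-1)x=0$; the hypothesis $\sigma\neq 1$ forces $x=0\in D(0,r)$, so $\fix(g\circ i)$ meets $B(0,r+\eps_0)\setminus D(0,r)$ nowhere.

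With (i)--(iii) verified, Theorem \ref{th2} delivers a solution $x$ with $x(I)\subset D(0,r)$. The antiperiodic assertion follows by specializing to $n=1$, $t_1=T$, $\alpha_1=-1$: then $g(x)=-x(T)$, while $\sum|\alpha_i|=1\leqslant 1$ and $\sum\alpha_i=-1\neq 1$, so all hypotheses hold and $x(0)=-x(T)$ results. I expect no genuine obstacle here --- the argument is pure verification; the only point worth flagging is the division of labour between the two numerical hypotheses, with $\sum|\alpha_i|\leqslant 1$ driving (i)--(ii) and $\sum\alpha_i\neq 1$ being exactly what excludes nonzero fixed points of $g\circ i$ in (iii), the condition on which the degree computation internal to Theorem \ref{th2} rests.
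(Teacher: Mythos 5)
Your proposal is correct and follows exactly the paper's route: apply Theorem \ref{th2} to $g(x)=\sum_{i=1}^n\alpha_ix(t_i)$ and verify hypotheses (i)--(iii), with (i) obtained from the same triangle-inequality estimate $|g(x)|\<\max_{1\<i\<n}|x(t_i)|$. In fact you spell out the checks of (ii) and (iii) (via $g\circ i=\sigma\cdot id$ and $\sigma\neq 1$) more explicitly than the paper does, which only records the estimate for (i).
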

\begin{proof}
It suffices to realize that the boundary operator corresponding to nonlocal initial condition \eqref{multi2} satisfies assumptions (i)-(iii) of Theorem \ref{th2}. This is the case because \[|x(0)|=\left|\sum\limits_{i=1}^n\alpha_ix(t_i)\right|\<\sum\limits_{i=1}^n|\alpha_i||x(t_i)|\<\max_{1\<i\<n}|x(t_i)|=|x(t_{i_0})|\] for some $t_{i_0}\in(0,T]$.
\end{proof}
\begin{corollary}
Let $h\colon\R{N}\to\R{N}$ be a continuous map such that $|h(x)|\<|x|$ for any $x\in\R{N}$. Assume further that the fixed point set of $h$ is compact. Under the assumptions $(\F_1)$-$(\F_4)$ and \eqref{F_5'} or \eqref{-F_5'} the boundary value problem with mean value condition 
\begin{equation*}
\begin{gathered}
x'\in F(t,x),\;\text{a.e. on }I,\\
x(0)=\frac{1}{T}\int_0^Th(x(t))\,dt
\end{gathered}
\end{equation*}
possesses a solution.
\end{corollary}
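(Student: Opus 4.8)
The plan is to deduce the corollary directly from Theorem \ref{th2} by checking that the mean value boundary operator
\[g(x):=\frac{1}{T}\int_0^Th(x(t))\,dt\]
fulfills every hypothesis of that theorem for the very radius $r>0$ supplied by the tangency condition \eqref{F_5'} or \eqref{-F_5'}. First I would record the routine facts. The operator $g$ is continuous, since $x_n\to x$ uniformly forces $h(x_n(\cdot))\to h(x(\cdot))$ uniformly (use uniform continuity of $h$ on the compact range) and hence convergence of the integrals; moreover $g$ maps bounded sets to bounded sets because $|g(x)|\<\frac{1}{T}\int_0^T|h(x(t))|\,dt\<\frac{1}{T}\int_0^T|x(t)|\,dt\<\|x\|$. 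The interaction with the embedding is transparent: $g(i(x_0))=\frac{1}{T}\int_0^Th(x_0)\,dt=h(x_0)$, so $g\circ i=h$ and in particular $\fix(g\circ i)=\fix(h)$.

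Granting this identity, condition (ii) is immediate from the norm bound, $|g(i(x_0))|=|h(x_0)|\<|x_0|$ for every $x_0$. Condition (i) is the substantive computation, and I would argue it as follows. Take $x\in C^1(I,B(0,r+\eps_0))$ with $x(0)=g(x)$; the averaging bound above yields $|x(0)|=|g(x)|\<\frac{1}{T}\int_0^T|x(t)|\,dt$. If one had $|x(t)|<|x(0)|$ for every $t\in(0,T]$, then, $|x(\cdot)|$ being continuous and the strict inequality holding on a set of full measure, we would get $\frac{1}{T}\int_0^T|x(t)|\,dt<|x(0)|$, a contradiction. Hence there exists $t\in(0,T]$ with $|x(t)|\geqslant|x(0)|=|g(x)|$, which is exactly (i). This parallels the verification of (i) in the multi-point corollary, the time average here playing the role of the convex combination $\sum_i\alpha_ix(t_i)$, and it is worth noting that the argument needs no use of the confinement to $B(0,r+\eps_0)$.

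The place where the compactness of $\fix(h)$ is indispensable is condition (iii), and this I expect to be the main obstacle. One must produce $\eps_0>0$ for which the open shell $B(0,r+\eps_0)\setminus D(0,r)=\{x\colon r<|x|<r+\eps_0\}$ contains no fixed point of $g\circ i=h$. Since $\fix(h)$ is compact, the set of norms $\{|x|\colon x\in\fix(h)\}$ is a compact subset of $[0,\infty)$, and choosing $\eps_0$ smaller than the distance from $r$ to the part of this set lying strictly above $r$ isolates the shell from $\fix(h)$, giving (iii). The delicate point to watch is precisely that $r$ should not be an accumulation point, from above, of norms of fixed points of $h$; compactness guarantees $\fix(h)$ is closed and bounded, which is what makes such a separating $\eps_0$ available once $r$ sits in general position with respect to these norms, whereas a genuine right-accumulation of fixed-point norms at $r$ is the case one would have to exclude or circumvent by passing to a nearby admissible radius. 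With (i)--(iii) established, Theorem \ref{th2} furnishes a solution $x$ of \eqref{uncons} with $x(I)\subset D(0,r)$, and since $g$ encodes the mean value condition $x(0)=\frac{1}{T}\int_0^Th(x(t))\,dt$, this $x$ is the asserted solution.
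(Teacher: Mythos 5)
Your reduction to Theorem \ref{th2} is the route the paper takes, and your treatment of conditions (i) and (ii) coincides with the paper's: the identity $g\circ i=h$ gives $\fix(g\circ i)=\fix(h)$ and condition (ii) at once, and condition (i) is obtained by the same averaging contradiction (if $|x(t)|<|x(0)|$ for all $t\in(0,T]$ then $|x(0)|>\frac{1}{T}\int_0^T|x(t)|\,dt\geqslant\frac{1}{T}\int_0^T|h(x(t))|\,dt\geqslant|g(x)|=|x(0)|$).

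The gap is in condition (iii), and it is the one you flag yourself without resolving: if $h$ has fixed points of norm strictly greater than $r$ whose norms accumulate at $r$ from above, then no $\eps_0>0$ frees the shell $B(0,r+\eps_0)\setminus D(0,r)$ of fixed points, and compactness of $\fix(h)$ does not exclude this --- a convergent subsequence of such fixed points only yields a fixed point on $\bd D(0,r)$, which contradicts nothing. Your appeal to ``general position'' of $r$ is therefore not a proof of (iii). The paper's argument avoids the issue entirely: it uses compactness of $\fix(h)$ only through boundedness, taking $r$ large enough that $\fix(h)\subset D(0,r)$, so that $\fix(g\circ i)=\fix(h)$ misses all of $\R{N}\setminus D(0,r)$ and (iii) holds trivially for every $\eps_0$. (This presumes the tangency condition is available at such an $r$; under the literal reading in which $r$ is fixed in advance by \eqref{F_5'} or \eqref{-F_5'}, your objection is a real one and the statement implicitly needs $\fix(h)\subset D(0,r)$.) To close your proof you should either adopt the paper's choice of $r$ or add that proviso; as written, the verification of (iii) fails in general.
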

\begin{proof}
Let $r>0$ be such that $\fix(h)\subset D(0,r)$. Put $g(x):=(1/T)\int_0^Th(x(t))\,dt$. Observe that $\fix(h)\cap(\R{N}\setminus D(0,r))=\varnothing$ and $\fix(g\circ i)=\fix(h)$. Now, suppose that $x(0)=g(x)$ and $|x(0)|>|x(t)|$ for $t\in(0,T]$. Then
\[|x(0)|=\frac{1}{T}\int_0^T|x(0)|\,dt>\frac{1}{T}\int_0^T|x(t)|\,dt\geqslant\frac{1}{T}\int_0^T|h(x(t))|\,dt\geqslant\left|\frac{1}{T}\int_0^Th(x(t))\,dt\right|=|x(0)|.\] Thus, there exists $t_0\in(0,T]$ such that $|x(0)|\<|x(t_0)|$.
\end{proof}
Employing an extension of the concept introduced in Definition \ref{boundset}. we will now investigate an issue of the existence of solutions to Floquet boundary value problem associated with differential inclusion. Assume that $C\in GL(N,\R{})$. Denote by $\langle C\rangle$ the cyclic subgroup generated by $C$. Let $P_N\colon\R{N}\to\R{N}$ be a linear projector whose range is $\ker(id-C)$. For $F\colon I\times\R{N}\map\R{N}$ satisfying $(\F_1)$-$(\F_4)$ one can define, engaging the notion of the Aumann integral, a set-valued map $\w{F}\colon\R{N}\map\R{N}$ by the formula $\w{F}(x_0):=\int_0^TF(t,x_0)\,dt$.
\begin{theorem}\label{floqth}
Let $K$ be an open bounded subset of $\R{N}$ such that $\bd K$ is inavariant under the action of $\langle C\rangle$. Let $F\colon I\times\overline{K}\map\R{N}$ be a bounded upper semicontinuous map with convex compact values. Assume that, for each $x\in\bd K$, there exists a locally Lipschitz and regular at $x$ function $f_x\colon\dom(f_x)\to\R{}$ such that the following conditions hold:
\begin{itemize}
\item[(i)] $\overline{K}\subset\{y\in\dom(f_x)\colon f_x(y)\<0\}$,
\item[(ii)] $f_x(x)=0$,
\item[(iii)] $\forall\,t\in I\;\forall\,y\in F(t,x)\;\;f^\circ_x(x;y)\neq 0$,
\item[(iv)] $\max\limits_{y\in F(0,x)}f_x^\circ(x;y)\cdot\max\limits_{z\in F(T,Cx)}f_{Cx}^\circ(Cx;-z)<0$.
\end{itemize}
Moreover, assume that either
\begin{itemize}
\item[(v)] $\ker(id-C)=\{0\}$ and $0\in K$,
\end{itemize}
or
\begin{itemize}
\item[(vi)] $\int\limits_0^TF(t,x_0)\,dt\cap\im(id-C)=\varnothing$ for $x_0\in\ker(id-C)\cap\bd K$ and \[\deg(P_N\circ\w{F},\ker(id-C)\cap K,0)\neq 0.\]
\end{itemize}
Then the following Floquet boundary value problem 
\begin{equation}\label{floquet}
\begin{gathered}
x'\in F(t,x),\;t\in I,x\in\overline{K}\\
x(T)=Cx(0),
\end{gathered}
\end{equation}
has at least one solution.
\end{theorem}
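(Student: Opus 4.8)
My plan is to realise the Floquet problem \eqref{floquet} as a coincidence equation $Lx\in N(x)$ and to apply Theorem~\ref{conth}, pushing the linear boundary relation into the Fredholm operator. Concretely I would set $X:=C(I,\R{N})$, $Z:=L^1(I,\R{N})\times\R{N}$, $\dom L:=AC(I,\R{N})$ and $Lx:=(\dot{x},\,x(T)-Cx(0))$, and let $N(x):=\{(y,0)\colon y\in L^1(I,\R{N}),\ y(t)\in F(t,x(t))\text{ a.e.}\}$. One computes $\ker L=i(\ker(id-C))$ and $\im L=\{(g,\xi)\colon\xi-\int_0^Tg\,dt\in\im(id-C)\}$, whence $\coker L\cong\R{N}/\im(id-C)$ and, by rank--nullity, $L$ is Fredholm of index zero with $\dim\ker L=\dim\coker L=\dim\ker(id-C)$. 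Boundedness and upper semicontinuity of $F$ ensure, via Arzel\`a--Ascoli, that $QN$ and $K_{P,Q}N$ are compact upper semicontinuous maps with compact convex values, so Theorem~\ref{conth} applies to $\Omega:=C(I,K)$, an open bounded subset of $X$; note that $Lx\in N(x)$ is exactly \eqref{floquet}.

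The substance lies in hypothesis (a), the bound set estimate. Fix $\lambdaup\in(0,1)$ and a solution $x\in\dom L\cap\overline{\Omega}$ of $\dot{x}\in\lambdaup F(\cdot,x)$, $x(T)=Cx(0)$; since $F$ is bounded, $x$ is Lipschitz, so each $f_{x_0}\circ x$ is Lipschitz and the mollification-and-Mazur device from the proof of Theorem~\ref{th1} gives a meaning to $(f_{x_0}\circ x)'$ at the relevant instants. For an interior contact $x(t_0)\in\bd K$ with $t_0\in(0,T)$, conditions (i)--(iii) exhibit $f_{x(t_0)}$ as a bounding function and $K$ as an autonomous bound set in the sense of Definition~\ref{boundset}; arguing as in Proposition~\ref{boundsetprop}, the function $f_{x(t_0)}\circ x$ has an interior maximum at $t_0$, so both one-sided derivatives give $f^\circ_{x(t_0)}(x(t_0);\dot{x}(t_0))\<0$ and $f^\circ_{x(t_0)}(x(t_0);-\dot{x}(t_0))\<0$, and subadditivity together with positive homogeneity of $v\mapsto f^\circ(x;v)$ forces $f^\circ_{x(t_0)}(x(t_0);\dot{x}(t_0))=0$, contradicting (iii). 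Hence $x((0,T))\subset\Int K$, and only the two endpoints remain.

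The endpoints are where the invariance of $\bd K$ under $\langle C\rangle$ and the product condition (iv) combine, and I expect this to be the main obstacle. Because $\langle C\rangle$ is a group preserving $\bd K$, one has $x(0)\in\bd K\Leftrightarrow x(T)=Cx(0)\in\bd K$, so it suffices to exclude $x(0)=x_0\in\bd K$. Here I would exploit regularity of the bounding functions: it makes the relevant one-sided derivatives of $f_{x_0}\circ x$ and $f_{Cx_0}\circ x$ agree with the Clarke derivatives, so that the right maximum at $t=0$ yields $f^\circ_{x_0}(x_0;\dot{x}(0))\<0$ and the left maximum at $t=T$ yields $f^\circ_{Cx_0}(Cx_0;-\dot{x}(T))\<0$. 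Writing $\dot{x}(0)=\lambdaup a$ and $\dot{x}(T)=\lambdaup b$ with $a\in F(0,x_0)$, $b\in F(T,Cx_0)$, condition (iii) shows $y\mapsto f^\circ_{x_0}(x_0;y)$ to be nonvanishing, hence of constant sign, on the convex set $F(0,x_0)$, so the first maximum in (iv) is strictly negative; the parallel argument at $Cx_0$, using the backward derivative and (iii), pins the sign of the second maximum. The delicate point -- and the crux of the whole proof -- is precisely to propagate these boundary-velocity signs to the entire value sets $F(0,x_0)$ and $F(T,Cx_0)$ and to read the resulting signs of the two factors as incompatible with the strict inequality (iv); this is exactly what should force $x(0)\in\Int K$, contradicting $x_0\in\bd K$, and shows $x\not\in\bd\Omega$.

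Finally, hypothesis (b) is delivered verbatim by (v)--(vi). Restricting $N$ to $\ker L=i(\ker(id-C))$ and using $P_N$ to realise the canonical isomorphism $\coker L\cong\ker(id-C)$, the reduced map $QN$ becomes $P_N\circ\w{F}$ with $\w{F}(x_0)=\int_0^TF(t,x_0)\,dt$ the Aumann integral. Under (vi) the nondegeneracy $0\not\in QN$ on $\ker L\cap\bd\Omega$ is the stated $\w{F}(x_0)\cap\im(id-C)=\varnothing$ for $x_0\in\ker(id-C)\cap\bd K$, and the required $\deg\neq 0$ is exactly $\deg(P_N\circ\w{F},\ker(id-C)\cap K,0)\neq 0$; under (v) one has $\ker(id-C)=\{0\}$, so $L$ is invertible, $\coker L=\{0\}$, (b) is automatic, and $0\in K$ keeps $\ker L\cap\Omega$ nonempty. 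With (a) and (b) in hand, Theorem~\ref{conth} yields a solution of $Lx\in N(x)$, i.e. of the Floquet problem \eqref{floquet}.
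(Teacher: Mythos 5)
Your global strategy coincides with the paper's: realise \eqref{floquet} as a coincidence equation $Lx\in N(x)$, take $\Omega=C(I,K)$, and apply Theorem \ref{conth}; your variant of the Fredholm splitting (the boundary relation placed in the second component of $Lx$ rather than in $\dom L$ or in $N$) is an inessential repackaging of the paper's two settings for (v) and (vi), and your identification of hypothesis (b) with (v)/(vi) is correct. The genuine gap sits in hypothesis (a), exactly where you yourself locate ``the crux'': you never actually derive the contradiction with (iv) at the endpoints. You obtain (modulo the caveat below) $f^\circ_{x_0}(x_0;\dot x(0))\<0$ and $f^\circ_{Cx_0}(Cx_0;-\dot x(T))\<0$ and then assert that these signs ``should'' propagate to the whole value sets and clash with (iv) --- but that propagation is the entire content of the step. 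Condition (iii) together with connectedness of the values does force $y\mapsto f^\circ_{x_0}(x_0;y)$ to keep a constant sign on $F(0,x_0)$, hence $\max_y f^\circ_{x_0}(x_0;y)<0$; however (iii) applied at $Cx_0$ controls the sign of $f^\circ_{Cx_0}(Cx_0;z)$, not of $f^\circ_{Cx_0}(Cx_0;-z)$, and the two are linked only by $f^\circ(x;v)+f^\circ(x;-v)\geqslant 0$. Knowing $f^\circ_{Cx_0}(Cx_0;-p_T)\<0$ for the single direction $p_T$ produced by the trajectory therefore does not by itself determine the sign of $\max_z f^\circ_{Cx_0}(Cx_0;-z)$, so the incompatibility with the product condition (iv) still has to be argued; the paper does this work through the two difference-quotient estimates (its displays (26)--(27)) and the constant-sign discussion that follows them, and your proposal stops short of it.

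A second, more technical defect: you evaluate $\dot x$ at $t=0$, $t=T$ and at the interior contact instant $t_0$, and you write $\dot x(0)=\lambdaup a$ with $a\in F(0,x_0)$. A solution of $\dot x\in\lambdaup F(t,x)$ is merely Lipschitz, so $\dot x$ need not exist at these particular instants, and even where it exists the inclusion holds only almost everywhere, so nothing forces $\dot x(0)\in\lambdaup F(0,x(0))$. The mollification-and-Mazur device you import from the proof of Theorem \ref{th1} yields information about $(f\circ x)'$ almost everywhere and cannot be invoked at a prescribed point. The paper replaces the pointwise velocities by the one-sided generalized Jacobians $\partial_+x(0)$, $\partial_-x(T)$ and by $\partial x(t_0)$, shows via upper semicontinuity of $F$ that these sets are contained in the corresponding values of $F$, and runs the one-sided estimates against a selected $p_0\in\partial_+x(0)$ using the assumed directional regularity of $f_{x_0}$; some substitute of this kind is indispensable before your endpoint and interior arguments can be carried out.
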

\begin{proof}
We will adjust the Fredholm setting depending on whether we are dealing with the case (v) or (vi).\par Ad (v): Let $X:=C(I,\R{N})$, $Z:=L^1(I,\R{N})\times\R{N}$, $\dom L:=AC(I,\R{N})$; $L\colon\dom L\subset X\to Z$ be such that $Lx:=(\dot{x},0)$. Then $\ker L=i(\R{N})$, where $i\colon\R{N}\hookrightarrow C(I,\R{N})$ is defined by $i(x_0)(t)=x_0$, $\im L=L^1(I,\R{N})\times \{0\}$ and $\coker L\approx\R{N}$, i.e. $L$ is a Fredholm mapping of index zero. Consider continuous linear operators $P\colon X\to X$ and $Q\colon Z\to Z$ such that $P:=\ev_0$ and $Q((y,x_0)):=(0,x_0)$. It is clear that $(P,Q)$ is an exact pair of idempotent projections with respect to $L$. From now on $\Phi\colon\im Q\to\ker L$ will denote a fixed linear homeomorphism, given by $\Phi((0,x_0))=i(x_0)$. Put $\Omega:=C(I,K)$ and define a nonlinear operator $N\colon\overline{\Omega}\to Z$ by \[N(x):=\left\{w\in L^1(I,\R{N})\colon w(t)\in F(t,x(t))\mbox{ a.e. on }I\right\}\times\left\{\gamma(x)\right\},\] where $\gamma:=C\circ\ev_0-\ev_T$.  It is clear that the Floquet boundary value problem \eqref{floquet} is equivalent to the operator inclusion $Lx\in N(x)$. \par It is a matter of routine to verify that maps $QN$ and $K_{P,Q}N$ satisfy the assumptions of Theorem \ref{conth}. In order to confirm that the condition (a) of Theorem \ref{conth}. is also fulfilled let us suppose that $x\in\overline{\Omega}=C(I,\overline{K})$ is a solution to the boundary value problem
\begin{equation}\label{floq2}
\begin{aligned}
&\dot{x}(t)\in\lambdaup F(t,x(t)),\;\text{a.e. on }I,\\
&x(T)=Cx(0),
\end{aligned}
\end{equation}
for some $\lambdaup\in(0,1)$. In view of the invariance of the boundary $\bd K$ with respect to the cyclic subgroup $\langle C\rangle$ we see that $x(0)\in\bd K\Leftrightarrow x(T)\in\bd K$. Suppose for definiteness that $x(0)\in\bd K$. As we know from \cite[Def.2.6.1]{clarke} and \cite[Th.4.]{warga} the generalized Jacobian of $x$ at $t\in(0,T)$ is defined by \[\partial x(t):=\co\left\{\lim_{n\to\infty}\dot{x}(t_n)\colon t_n\xrightarrow[n\to\infty]{}t, t_n\not\in(\Omega_x\cup S)\right\},\] where $S\subset(0,T)$ is a set of Lebesgue measure zero and $\Omega_x$ is a set of points, where $x$ is not differentiable. By analogy we may define one-sided generalized Jacobians of $x$ at boundary points, i.e. \[\partial_+x(0):=\overline{\co}\left\{\lim_{n\to\infty}\dot{x}(t_n)\colon t_n\xrightarrow[n\to\infty]{}0^+, t_n\not\in(\Omega_x\cup S)\right\}\] and \[\partial x_-(T):=\overline{\co}\left\{\lim_{n\to\infty}\dot{x}(t_n)\colon t_n\xrightarrow[n\to\infty]{}T^-, t_n\not\in(\Omega_x\cup S)\right\}.\] The following observations concerning the introduced notions are straightforward:
\begin{itemize}
\item[(a)] $\partial x$ is upper semicontinuous at $0$ (respectively, at $T$) in the sense that \\$\forall\,\eps>0\;\exists\,\delta>0\;\forall\,t\in(0,\delta]\;\;\partial x(t)\subset B\left(\partial_+x(0),\eps\right)$\\$\left(\forall\,\eps>0\;\exists\,\delta>0\;\forall\,t\in[T-\delta,T)\;\;\partial x(t)\subset B\left(\partial_-x(T),\eps\right)\right)$ - it is a plain consequence of \cite[Prop.2.6.2(c)]{clarke},
\item[(b)] $\partial_+x(0)\subset F(0,x(0))$ and $\partial_-x(T)\subset F(T,x(T))$ by analogy to \cite[Lem.3.1.]{blasi},
\item[(c)] $x(h)-x(0)\in\overline{\co}\left\{\partial x((0,h])\cup\partial_+x(0)\right\}\cdot h$ and \\$x(T)-x(T-h)\in\overline{\co}\left\{\partial x([T-h,T))\cup\partial_-x(T)\right\}\cdot h$ for $h>0$ - along the lines of arguments in \cite[Prop.2.6.5]{clarke}.
\end{itemize}
According to the assumption, there is a locally Lipschitz $f=f_{x(0)}$ such that $f(x(0))=0$ and $f(x(h))\<f(x(0))$ for $h>0$. In view of (c) we may describe an increment of the function $x$ as follows $x(h)-x(0)=p_h\cdot h$ for some $p_h\in\overline{\co}\left\{\partial x((0,h])\cup\partial_+x(0)\right\}$. Taking into account (a) it becomes clear that $p_h\xrightarrow[h\to 0^+]{}p_0\in\partial_+x(0)$. Put $\sigma(h):=(p_h-p_0)h$. Now, we are in position to estimate
\begin{equation}\label{est1}
\begin{split}
0&\geqslant\underset{h\to 0^+}{\overline{\lim}}\;\frac{f(x(h))-f(x(0))}{h}\\&=\underset{h\to 0^+}{\overline{\lim}}\!\left(\frac{f(x(h))-f(x(0)+\sigma(h))}{h}+\frac{f(x(0)+\sigma(h))-f(x(0))}{h}\right)\\&=\underset{h\to 0^+}{\overline{\lim}}\!\frac{f(x(h))\!-\!f(x(0)+\sigma(h))}{h}+\!\underset{h\to 0^+}{\lim}\!\frac{f(x(0)+\sigma(h))\!-\!f(x(0))}{h}\\&=\underset{h\to 0^+}{\overline{\lim}}\;\frac{f(x(0)+\sigma(h)+hp_0)-f(x(0)+\sigma(h))}{h}+L_{x(0)}\underset{h\to 0^+}{\lim}\;\frac{|\sigma(h)|}{h}\\&=\underset{h\to 0^+}{\lim}\;\frac{f(x(0)+hp_0)-f(x(0))}{h}=f^\circ(x(0);p_0),
\end{split}
\end{equation}
where $L_{x(0)}$ is a Lipschitz constant for function $f$ in the neighbourhood of the point $x(0)$. The last equality is a consequence of regularity of $f$ at $x(0)$. 
\par Observe that $x(T-h)-x(T)=p_h\cdot(-h)$, where $p_h\in\overline{\co}\left\{\partial x([T-h,T))\cup\partial_-x(T)\right\}$. Moreover, $p_h\xrightarrow[h\to 0^+]{}p_T\in\partial_-x(T)$. Let $f:=f_{x(T)}$ and $\sigma(-h):=(p_h-p_T)(-h)$. A quite similar reasoning to the previous one leads to the conclusion that
{\allowdisplaybreaks\begin{equation}\label{est2}
\begin{split}
0&\<\underset{h\to 0^+}{\overline{\lim}}\;\frac{f(x(T-h))-f(x(T))}{-h}\\&=\underset{h\to 0^+}{\overline{\lim}}\!\left(\frac{f(x(T-h))-f(x(T)+\sigma(-h))}{-h}+\frac{f(x(T)+\sigma(-h))-f(x(T))}{-h}\right)\\&=\underset{h\to 0^+}{\overline{\lim}}\!\frac{f(x(T-h))\!-\!f(x(T)+\sigma(-h))}{-h}+\!\underset{h\to 0^+}{\lim}\!\frac{f(x(T)+\sigma(-h))\!-\!f(x(T))}{-h}\\&=\underset{h\to 0^+}{\overline{\lim}}\;\frac{f(x(T)+\sigma(-h)-hp_T)-f(x(T)+\sigma(-h))}{-h}-L_{x(T)}\underset{h\to 0^+}{\lim}\;\frac{|\sigma(-h)|}{|-h|}\\&\<\underset{h\to 0^+}{\underset{y\to x(T)}{\overline{\lim}}}\frac{f(y-hp_T)-f(y)}{-h}=-f^\circ(x(T);-p_T).
\end{split}
\end{equation}}
In summary, estimates \eqref{est1}-\eqref{est2} come down to the following conclusion
\[\exists\,p_0\in F(0,x(0))\;\exists\,p_T\in F(T,Cx(0))\;\;\;f^\circ_{x(0)}(x(0);p_0)\cdot f^\circ_{Cx(0)}(Cx(0);-p_T)\geqslant 0,\]
which yields a contradiction with the assumption (iv). Thus, $x(\{0,T\})\cap\bd K=\varnothing$.\par Seeing that $F$ has connected values and the derivative $f^\circ(x;\cdot)$ is continuous, one may formulate equivalently the assumption (iii) in the following way: for every $t\in I$ the expression $f_x^\circ(x;y)$ is of constant sign independently of the choice of an argument $y\in F(t,x)$. Suppose there is a point $t_0\in(0,T)$ such that $x(t_0)\in\bd K$. Let $f=f_{x(t_0)}$ satisfy conditions (i)-(iii). Then $f(x(t))\<f(x(t_0))$ on some both-sided neighbourhood of $t_0$. Imitating the line of reasoning accompanying estimate \eqref{est1} we find that there is $p_0\in\partial x(t_0)\subset F(t_0,x(t_0))$ such that
\[f^\circ(x(t_0);p_0)=\underset{h\to 0^+}{\overline{\lim}}\;\frac{f(x(t_0+h))-f(x(t_0))}{h}\<0.\] This inequality is contrary to the assumption that $f^\circ(x(t_0);y)>0$ for all $y\in F(t_0,x(t_0))$. On the other hand, estimating in a way presented in \eqref{est2} one sees that \[f^\circ(x(t_0);-p_0)=-\underset{h\to 0^+}{\overline{\lim}}\;\frac{f(x(t_0-h))-f(x(t_0))}{-h}\<0,\] for some $p_0\in F(t_0,x(t_0))$. It means that there exists $p\in\partial f(x(t_0))$ such that $\langle p,p_0\rangle\geqslant 0$. The requirement that $f^\circ(x(t_0);y)<0$ for every $y\in F(t_0,x(t_0))$ is obviously contrary to the latter. Therefore, $x(I)\cap\bd K=\varnothing$. In fact, we have confirmed that $Lx\not\in\lambdaup N(x)$ for all $x\in\dom L\cap\bd\Omega$ and $\lambdaup\in(0,1)$.\par It is clear that condition $0\not\in QN(\ker L\cap\bd\Omega)$ is equivalent to $(id-C)x_0\neq 0$ for $x_0\in\bd K$. The latter is obviously satisfied, due to the assumption (v). Using standard properties of the Brouwer degree we obtain
\begin{align*}
\deg(\Phi QN,\ker L\cap\Omega,0)&=\deg(i^{-1}\Phi QNi,i^{-1}(\Omega),0)=\deg(id-C,K,0)\\&=\sgn\det(id-C)=\pm 1\neq 0,
\end{align*}
as $0\in(id-C)(K)$ and $(id-C)\in GL(N,\R{})$. In wiev of Theorem \ref{conth}. there is a solution $x\in\overline{\Omega}$ of the inclusion $Lx\in N(x)$. This is of course also a solution of the problem \eqref{floquet}. \par Ad (vi): Put $X:=C(I,\R{N})$ and $Z:=L^1(I,\R{N})$. Let us redefine the domain $\dom L:=\left\{x\in AC\left(I,\R{N}\right)\colon x(T)=Cx(0)\right\}$ of operator $L$ given by $Lx:=\dot{x}$. In this case $\ker L=i(\ker(id-C))$ and \[\im L=\left\{x\in L^1(I,\R{N})\colon\int_0^Tx(s)\,ds\in\im(id-C)\right\}.\] It is easy to see that $\coker L\approx\ker(id-C)$. Therefore, $L$ is a Fredholm mapping of index zero. Consider a continuous linear operator $P\colon X\to X$ such that $P:=i\circ P_N\circ\ev_T$. It is clear that $\im P=\ker L$. Define the second projection $Q\colon Z\to Z$ by the formula $Q:=i\circ P_N\circ\ev_T\circ V$, where $V\colon Z\to X$ is an integral operator given by $V(x)(t):=\int_0^tx(s)\,ds$. It follows directly that $\ker Q=\im L$. Observe that $\ev_T(X)=\R{N}=\ev_T(V(Z))$, so we are allowed to choose $\Phi:=id_{\,\im Q}$. Take $z\in\im L$. There is a unique $x_0\in\R{N}$ such that $\int_0^Tz(s)\,ds=(id-C)(id-P_N)x_0$. Therefore, the inverse $L_P^{-1}\colon\im L\to\dom L\cap\ker P$ should be defined as follows $L^{-1}_P(z):=(id-P_N)x_0+V(z)$. Since the restriction $L_P$ is a bounded linear homomorphism between the Banach spaces $\left(\dom L\cap\ker P,||\cdot||_{AC}\right)$ and $\left(\im L,||\cdot||_1\right)$, the inverse is also bounded. Again, denote by $\Omega$ the open subset $C(I,K)$. Let $N\colon\overline{\Omega}\to Z$ be the Nemytskij operator corresponding to $F$, i.e. \[N(x):=\left\{w\in L^1(I,\R{N})\colon w(t)\in F(t,x(t))\mbox{ a.e. on }I\right\}.\] Once more we may rewrite the Floquet boundary value problem \eqref{floquet} in the form of the operator inclusion $Lx\in N(x)$. \par For an arbitrary element $w\in N(\overline{\Omega})$ we have estimations
\[||K_{P,Q}(w)||=||L^{-1}_P(id-Q)(w)||\<||L_P^{-1}||_{\mathcal L}||id-Q||_{\mathcal L}Tc\] and 
\begin{align*}
\left|K_{P,Q}(w)(t)-K_{P,Q}(w)(\tau)\right|&=\left|L^{-1}_P(id-Q)(w)(t)-L^{-1}_P(id-Q)(w)(\tau)\right|\\&=\left|\int_0^t(id-Q)(w)(s)\,ds-\int_0^\tau(id-Q)(w)(s)\,ds\right|\\&\<\int_\tau^t|w(s)|\,ds+\left|Q\left(\int_\tau^tw(s)\,ds\right)\right|\\&\<c|t-\tau|+||Q||_{\mathcal L}c|t-\tau|,
\end{align*}
where the constant $c>0$ is such that for all $x\in\overline{K}$, $\sup_{y\in F(t,x)}|y|\<c$ a.e. on $I$. Hence, the set $K_{P,Q}N(\overline{\Omega})$ is relatively compact in the topology of uniform convergence. Analogously, the image $V(N(\overline{\Omega}))$ is relatively compact and eventually also image $QN(\overline{\Omega})$. The verification of upper semicontinuity of $QN$ and $K_{P,Q}N$ is completely standard and is based in part on the compactness of these maps.\par Justification of the condition (a) of Theorem \ref{conth}. is fully analogous to the arguments put forward previously. Notice that $QN(i(x_0))=(i\circ P_N\circ\w{F})(x_0)$. Thus, assumption $0\not\in QN(\ker L\cap\bd\Omega)$ is equivalent to \[\w{F}(\ker(id-C)\cap\bd K)\cap\ker P_N=\varnothing.\] In other words, it is equivalent to $\int\limits_0^TF(t,x_0)\,dt\cap\im(id-C)=\varnothing$ for $x_0\in\ker(id-C)\cap\bd K$. Now it suffices to note that \[\deg(\Phi QN,\ker L\cap\Omega,0)=\deg(P_N\circ\w{F},\ker(id-C)\cap K,0)\neq 0\] to be able to apply Theorem \ref{conth}. and get a solution of problem \eqref{floquet}.
\end{proof}
\begin{remark}
In fact, the application of the property of $K$ of being a subset of the sublevel set of function $f_x$ has merely a local nature. Therefore, assumption {\em (i)} of Theorem \ref{floqth}. can be reformulated as follows: there exists $\delta>0$ such that $\overline{K}\cap B(x,\delta)\subset\{y\in\dom(f_x)\colon f_x(y)\<0\}$.
\end{remark}

Let $\nabla_y^+f(x)$ denote the upper Dini directional derivative of the function $f$ at $x$ in the direction $y$, i.e.
\[\nabla_y^+f(x):=\underset{h\to 0^+}{\limsup}\;\frac{f(x+hy)-f(x)}{h}.\] The symbol $\langle A,B\rangle^{\pm}$ we use below stands for the lower (upper) inner product of nonempty compact subsets of $\R{n}$, i.e. 
\[\langle A,B\rangle^{-}=\inf\,\{\langle a,b\rangle\colon a\in A,b\in B\},\;\;\;\langle A,B\rangle^{+}=\sup\,\{\langle a,b\rangle\colon a\in A,b\in B\}.\]
\begin{corollary}\label{wn3}
The statement of Theorem \ref{floqth}. remains valid if, for every $x\in\bd K$, there is a locally Lipschitz function $f_x\colon\dom(f_x)\to\R{}$, which satisfies 
\begin{itemize}
\item[(iii')] $\forall\,t\in I\;\;\;\langle\partial f_x(x),F(t,x)\rangle^->0\vee\langle\partial f_x(x),F(t,x)\rangle^+<0$,
\item[(iv')] $\max\limits_{y\in F(0,x)}\nabla_y^+f_x(x)\cdot\max\limits_{z\in F(T,Cx)}f_{Cx}^\circ(Cx;-z)<0$.
\end{itemize}
instead of assumptions {\em (iii)-(iv)}.
\end{corollary}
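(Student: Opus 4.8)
\emph{Proof proposal.} The plan is to reproduce the proof of Theorem~\ref{floqth} almost verbatim, observing that conditions \emph{(iii)}, \emph{(iv)} and the regularity of $f_x$ enter there \emph{only} through the a priori localisation $x(I)\cap\bd K=\varnothing$ for a solution $x\in C(I,\overline{K})$ of \eqref{floq2}; the Fredholm setting, the projectors, the degree computation and the treatment of the alternatives \emph{(v)}, \emph{(vi)} make no reference to \emph{(iii)}--\emph{(iv)} and are carried over unchanged. Hence it suffices to re-derive $x(I)\cap\bd K=\varnothing$ from \emph{(iii$'$)}, \emph{(iv$'$)} without invoking regularity. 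I shall use the support-function identities
\[f^\circ(x;v)=\max_{p\in\partial f(x)}\langle p,v\rangle,\qquad f_\circ(x;v)=\min_{p\in\partial f(x)}\langle p,v\rangle=-f^\circ(x;-v),\]
together with the universal inequalities $f_\circ(x;v)\leqslant\nabla_v^+f(x)\leqslant f^\circ(x;v)$, valid for any locally Lipschitz $f$. These yield the sign dichotomy hidden in \emph{(iii$'$)}: for $x\in\bd K$ and $t\in I$ exactly one alternative holds, the \emph{negative regime} $\langle\partial f_x(x),F(t,x)\rangle^+<0$, in which $f_x^\circ(x;y)<0$ for every $y\in F(t,x)$, or the \emph{positive regime} $\langle\partial f_x(x),F(t,x)\rangle^->0$, in which $\min_{p}\langle p,y\rangle>0$, whence $\nabla_y^+f_x(x)\geqslant\min_p\langle p,y\rangle>0$, for every $y\in F(t,x)$.

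Next I would treat the interior points. If $x(t_0)\in\bd K$ with $t_0\in(0,T)$, then $f_{x(t_0)}\circ x$ has a local maximum at $t_0$, and feeding the increment apparatus (a)--(c) from the proof of Theorem~\ref{floqth} into the right- and into the left-hand difference quotients produces $p_0,q_0\in F(t_0,x(t_0))$ with
\[\nabla^+_{p_0}f_{x(t_0)}(x(t_0))\leqslant 0\qquad\text{and}\qquad f^\circ_{x(t_0)}(x(t_0);q_0)\geqslant 0,\]
the second being the regularity-free reading of \eqref{est2} (which delivers $f_\circ(\cdot;-q_0)\leqslant 0$, i.e.\ $f^\circ(\cdot;q_0)\geqslant 0$). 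The first inequality is incompatible with the positive regime and the second with the negative regime, so neither alternative of \emph{(iii$'$)} can hold at $x(t_0)$ --- a contradiction. Thus $x((0,T))\cap\bd K=\varnothing$.

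Finally the endpoints, where \emph{(iv$'$)} intervenes; write $f=f_{x(0)}$ and $\phi=f_{Cx(0)}$. By the $\langle C\rangle$-invariance of $\bd K$ we have $x(0)\in\bd K\Leftrightarrow x(T)=Cx(0)\in\bd K$, so assume $x(0)\in\bd K$. Only the forward estimate survives at $0$ and only the backward one at $T$, yielding $p_0\in F(0,x(0))$ and $p_T\in F(T,Cx(0))$ with $\nabla^+_{p_0}f(x(0))\leqslant 0$ and $\phi^\circ(Cx(0);p_T)\geqslant 0$. The first relation excludes the positive regime at $x(0)$, so the negative regime holds there and $\max_y\nabla_y^+f(x(0))\leqslant\max_y f^\circ(x(0);y)<0$. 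Then \emph{(iv$'$)} forces $\max_z\phi^\circ(Cx(0);-z)>0$. Since in the positive regime at $Cx(0)$ one would have $\phi^\circ(Cx(0);-z)=-\min_p\langle p,z\rangle<0$ for all $z$, this maximum being positive forces the negative regime at $Cx(0)$, i.e.\ $\phi^\circ(Cx(0);z)<0$ for every $z\in F(T,Cx(0))$; in particular $\phi^\circ(Cx(0);p_T)<0$, contradicting $\phi^\circ(Cx(0);p_T)\geqslant 0$. Hence $x(\{0,T\})\cap\bd K=\varnothing$, and with the interior case $x(I)\cap\bd K=\varnothing$, so the rest of the proof of Theorem~\ref{floqth} applies unchanged.

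The main obstacle is the bookkeeping of one-sided derivatives once regularity is abandoned: the forward (right-hand) estimate delivers only the upper Dini derivative $\nabla^+$, whereas the backward (left-hand) estimate delivers only the lower Clarke derivative $f_\circ$, equivalently $f^\circ$ in the reversed direction. The decisive point --- and the reason \emph{(iv$'$)} pairs $\nabla_y^+f_x(x)$ at $0$ with $f^\circ_{Cx}(Cx;-z)$ at $T$ --- is that these are exactly the two quantities the endpoint estimates produce; verifying that \emph{(iii$'$)} and \emph{(iv$'$)} are calibrated to them, rather than to the symmetric Clarke expressions of \emph{(iii)}--\emph{(iv)}, is the crux of the argument.
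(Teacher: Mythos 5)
Your proposal is correct and follows essentially the same route as the paper: both reduce the corollary to re-establishing $x(I)\cap\bd K=\varnothing$ through the one-sided estimates \eqref{est1}--\eqref{est2}, with the upper Dini derivative taking over exactly where regularity was used in Theorem \ref{floqth}, and with the dichotomy in (iii$'$) ruling out each one-sided inequality in turn. The only (inessential) difference is that where you contradict the positive regime via the elementary sandwich $f_\circ(x;y)\<\nabla_y^+f(x)\<f^\circ(x;y)$, the paper reaches the same conclusion by Lebourg's mean value theorem and the upper semicontinuity of $\partial f_x$, which produce $p_*\in\partial f_x(x)$ with $\nabla_y^+f_x(x)=\langle p_*,y\rangle$.
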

\begin{proof}
In fact, condition (iii') is equivalent to the alternative: \[\forall\,t\in I\;\;\langle\partial f_x(x),F(t,x)\rangle^->0\,\text{ or }\,\forall\,t\in I\;\;\langle\partial f_x(x),F(t,x)\rangle^+<0.\] One easily sees that the assumption $\langle\partial f_x(x),F(t,x)\rangle^+<0$ for every $t\in I$ remains in contradiction with estimation \eqref{est2}.\par Let $x$ be a solution to \eqref{floq2} such that $x(t_0)\in\bd K$ for $t_0\in(0,T)$.  In the context of estimation \eqref{est1}, we may apply Lebourg theorem \cite[Th.2.3.7]{clarke} and the upper semicontinuity of generalized gradient $\partial f_{x(t_0)}$ to indicate a point $p_*\in\partial f_{x(t_0)}(x(t_0))$ for which $\nabla_y^+f_{x(t_0)}(x(t_0))=\langle p_*,y\rangle$. Then the contradiction with condition $\forall\,t\in I\;\langle\partial f_x(x),F(t,x)\rangle^->0$ becomes clear.
\end{proof}
\begin{remark}
If the mappings $f_x$, corresponding to $x\in\bd K$, are of $C^1$-class and the right-hand side $F$ of \eqref{floquet} is univalent, then the set of assumptions {\em (i)-(iv)} of Theorem \ref{floqth}. reduces to those given in \cite[Def.2.1.]{mawhin}. Therefore, properties {\em (i)-(iv)} $($or {\em (i)-(ii)} and {\em (iii')-(iv')}$)$ of the set $K$ may be treated as a natural generalization of the concept of a bound set to the case of differential inclusions with Floquet boundary condition.
\end{remark}
In the following result we do not require that the set of state constraints possesses characteristics of a bound set for Floquet boundary value problem and the indispensable tangency conditions are expressed in terms of vectors being normal to the constraint set in the sense of Bouligand.
\begin{theorem}\label{floqth2}
Let $K$ be an open bounded subset of $\R{N}$ such that $\bd K$ is inavariant under the action of $\langle C\rangle$. Let $F\colon I\times\overline{K}\map\R{N}$ be a bounded upper semicontinuous map with convex compact values. Assume that, for each $x\in\bd K$, there exists a nonzero regular normal vector $v(x)\in N_K^b(x)$ such that the following conditions hold:
\begin{itemize}
\item[(i)] $\forall\,t\in I\;\;\;F(t,x)\cap\{v(x)\}^\bot=\varnothing$,
\item[(ii)] $\min\limits_{y\in F(0,x)}\langle v(x),y\rangle\cdot\min\limits_{z\in F(T,Cx)}\langle v(Cx),z\rangle>0$.
\end{itemize}
Moreover, assume that either assumption {\em (v)} or {\em (vi)} of Theorem \ref{floqth}. is satisfied. Then the Floquet boundary value problem \eqref{floquet} possesses a solution.
\end{theorem}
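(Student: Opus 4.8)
The plan is to reduce Theorem~\ref{floqth2} to Theorem~\ref{floqth} by manufacturing, for each boundary point, a locally Lipschitz bounding function out of the prescribed regular normal. Fix $x\in\bd K$ together with the nonzero vector $v(x)\in N_K^b(x)=T_{\overline{K}}(x)^\circ$. The defining property of a regular (Fr\'echet) normal is that $\limsup_{y\to x,\,y\in\overline{K}}\frac{\langle v(x),y-x\rangle}{|y-x|}\<0$; consequently the nonnegative quantity $\mu_x(r):=\sup\{\langle v(x),y-x\rangle\colon y\in\overline{K},\,|y-x|\<r\}$ satisfies $\mu_x(r)/r\to 0$ as $r\to 0^+$. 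I would then choose $\omega_x\in C^1([0,\infty))$ nondecreasing, with $\omega_x(0)=0$, $\omega_x'(0)=0$ and $\omega_x(r)\geqslant\mu_x(r)$ for small $r$ (a $C^1$ majorant of an $o(r)$ modulus is elementary to build), and set
\[f_x(y):=\langle v(x),y-x\rangle-\omega_x(|y-x|).\]

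First I would record the analytic properties of $f_x$. Since $\omega_x'(0)=0$, the map $y\mapsto\omega_x(|y-x|)$ is continuously differentiable near $x$ with vanishing gradient at $x$; hence $f_x$ is $C^1$ near $x$, therefore regular there, with $\partial f_x(x)=\{\nabla f_x(x)\}=\{v(x)\}$ and $f_x^\circ(x;w)=\langle v(x),w\rangle$ for every $w$. By construction $f_x(x)=0$, and for $y\in\overline{K}$ close to $x$ one has $\langle v(x),y-x\rangle\<\mu_x(|y-x|)\<\omega_x(|y-x|)$, so $f_x(y)\<0$; thus the localized form of hypothesis (i) of Theorem~\ref{floqth} (see the Remark following it) and hypothesis (ii) both hold.

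Next I would translate the tangency data. Because each $F(t,x)$ is convex, hence connected, and $w\mapsto\langle v(x),w\rangle$ is continuous and, by hypothesis (i) of Theorem~\ref{floqth2}, nonvanishing on $F(t,x)$, the functional $\langle v(x),\cdot\rangle$ keeps a constant sign on $F(t,x)$; since $f_x^\circ(x;y)=\langle v(x),y\rangle$, this is exactly condition (iii) of Theorem~\ref{floqth}. For condition (iv), note that $Cx\in\bd K$ by the $\langle C\rangle$-invariance of $\bd K$, so $f_{Cx}$ is defined and $f_{Cx}^\circ(Cx;-z)=-\langle v(Cx),z\rangle$; hence
\[\max_{y\in F(0,x)}f_x^\circ(x;y)\cdot\max_{z\in F(T,Cx)}f_{Cx}^\circ(Cx;-z)=\max_{y\in F(0,x)}\langle v(x),y\rangle\cdot\Bigl(-\min_{z\in F(T,Cx)}\langle v(Cx),z\rangle\Bigr).\]
As maxima and minima of a constant-sign linear functional share that sign, the right-hand side is negative precisely when $\min_{y}\langle v(x),y\rangle$ and $\min_{z}\langle v(Cx),z\rangle$ have the same sign, i.e. exactly hypothesis (ii) of Theorem~\ref{floqth2}. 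With (i)--(iv) of Theorem~\ref{floqth} verified and assumption (v) or (vi) carried over verbatim, Theorem~\ref{floqth} yields a solution of~\eqref{floquet}.

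The main obstacle is the bounding-function construction itself: a regular normal supplies only a first-order (sublinear) support of $\overline{K}$ at $x$, not a genuine local half-space containment, so the naive affine choice $\langle v(x),y-x\rangle$ may violate hypothesis (i) of Theorem~\ref{floqth}. The corrective modulus $\omega_x$ repairs this while, crucially, being flat enough ($\omega_x(r)=o(r)$ with $\omega_x'(0)=0$) not to perturb the first-order data, so that $\partial f_x(x)$ remains the singleton $\{v(x)\}$ and the sign bookkeeping survives. As an alternative avoiding this construction, one could rerun the Fredholm/coincidence-degree scheme of Theorem~\ref{floqth} directly, replacing estimates~\eqref{est1}--\eqref{est2} by the inclusions $p_0\in\partial_+x(0)\cap T_{\overline{K}}(x(0))$ and $p_T\in\partial_-x(T)$ with $-p_T\in T_{\overline{K}}(x(T))$; the normal inequalities $\langle v(x(0)),p_0\rangle\<0$ and $\langle v(Cx(0)),p_T\rangle\geqslant 0$ then force opposite signs on $F(0,x(0))$ and $F(T,Cx(0))$, contradicting hypothesis (ii).
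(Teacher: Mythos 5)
Your proposal is correct and follows the same overall strategy as the paper: both reduce Theorem \ref{floqth2} to the bound-set machinery by manufacturing a locally Lipschitz function $f_x$ from the regular normal $v(x)$, using precisely the Fr\'echet-normal inequality $\langle v(x),y-x\rangle\<\eps|y-x|$ on $\overline{K}\cap B(x,\delta)$ (the paper cites Aubin--Frankowska, Prop.~4.4.1, for this) and the connectedness/constant-sign argument to convert hypotheses (i)--(ii) into the tangency conditions. The difference lies in the correction term. The paper takes the linear, nonsmooth correction $f_x(y):=\langle v(x),y-x\rangle-\eps|y-x|$, whose generalized gradient at $x$ is the whole disc $D(v(x),\eps)$; since this $f_x$ is not regular at $x$, the paper cannot apply Theorem \ref{floqth} directly and instead routes through Corollary \ref{wn3} (the variant with the inner products $\langle\partial f_x(x),F(t,x)\rangle^{\pm}$ and the Dini derivative), which forces it to first establish, via upper semicontinuity and compactness of $I$, that the sign conditions persist uniformly over all of $D(v(x),\eps)$ for $\eps$ small. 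Your flat $C^1$ correction $\omega_x(|y-x|)$ with $\omega_x(r)=o(r)$, $\omega_x'(0)=0$ keeps $\partial f_x(x)=\{v(x)\}$ a singleton and $f_x$ regular at $x$, so Theorem \ref{floqth} applies verbatim and the $\eps$-robustness step disappears; the price is the construction of a $C^1$ nondecreasing majorant of the modulus $\mu_x(r)=o(r)$, which you assert rather than carry out, but which is indeed an elementary exercise (majorize $\mu_x(r)/r$ by a nonincreasing-in-the-limit function and integrate twice, or mollify). Your sign bookkeeping for condition (iv) is right: since $\langle v(x),\cdot\rangle$ keeps a constant sign on each $F(t,x)$, the maximum and minimum share that sign, so $\max_y f_x^\circ(x;y)\cdot\max_z f_{Cx}^\circ(Cx;-z)<0$ is equivalent to hypothesis (ii). Your closing alternative --- running the coincidence-degree scheme directly and replacing \eqref{est1}--\eqref{est2} by the observation that $p_0\in T_{\overline K}(x(0))$ and $-p_T\in T_{\overline K}(x(T))$, so that the normal inequalities force opposite signs --- is also sound and is arguably the most economical route of all, though it requires reopening the proof of Theorem \ref{floqth} rather than quoting it as a black box.
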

\begin{proof}
Fix $x\in\bd K$. Notice that the image $F(I\times\{x\})$ is connected. By reffering to assumption (i) and the Darboux property we see that for all $t\in I$ and $y\in F(t,x)$ the term $\langle v(x),y\rangle$ is of constant sign. Since the multimap $\langle\cdot,F(\cdot,x)\rangle\colon\R{N}\times I\map\R{}$ is usc, it follows that \[\forall\,t\in I\;\exists\,\eps_t>0\;\forall\,s\in[t-\eps_t,t+\eps_t]\;\forall\,z\in D(v(x),\eps_t)\;\forall y\in F(s,x)\;\;\langle z,y\rangle\gtrless 0.\] Making use of the compactness of interval $I$ we deduce that there exists $\eps_x>0$ such that \[\forall\,t\in I\;\forall y\in F(t,x)\;\forall\,z\in D(v(x),\eps_x)\;\;\langle z,y\rangle\gtrless 0.\] In an analogous manner, we can match the number $\eps_{Cx}>0$, corresponding with the choice of the point $Cx\in\bd K$. On the other hand, in conjunction with the assumption (ii) and upper semicontinuity of the set-valued map $\langle\cdot,F(0,x)\rangle\cdot\langle\cdot,F(T,Cx)\rangle\colon\R{N}\times\R{N}\map\R{}$, there exists $\eps'>0$ such that \[\forall\,u\in D(v(x),\eps')\;\forall\,w\in D(v(Cx),\eps')\;\forall\,y\in F(0,x)\;\forall\,z\in F(T,Cx)\;\;\;\langle u,y\rangle\cdot\langle w,z\rangle>0.\]\par Put $\eps:=\min\{\eps_x,\eps_{Cx},\eps'\}$. In view of \cite[Prop.4.4.1]{fran} there exists $\delta>0$ such that \[\forall\,y\in\overline{K}\cap B(x,\delta)\;\;\;\langle v(x),y-x\rangle\<\eps|y-x|.\] Let $\dom(f_x):=B(x,\delta)$ and $f_x\colon\dom(f_x)\to\R{}$ be a Lipschitz function given by the formula $f_x(y):=\langle v(x),y-x\rangle-\eps|y-x|$. It is easy to calculate that $\partial f_x(x)=D(v(x),\eps)$. Furthermore, so defined mapping possesses also the following properties:
\begin{itemize}
\item[(i)] $\overline{K}\cap B(x,\delta)\subset\{y\in\dom(f_x)\colon f_x(y)\<0\}$,
\item[(ii)] $f_x(x)=0$,
\item[(iii)] $\forall\,t\in I\;\;\langle\partial f_x(x),F(t,x)\rangle^->0$ or $\forall\,t\in I\;\;\langle\partial f_x(x),F(t,x)\rangle^+<0$,
\item[(iv)] $\underset{p\in\partial f_x(x)}{\underset{y\in F(0,x)}{\max}}\langle p,y\rangle\cdot\max\limits_{z\in F(T,Cx)}f_{Cx}^\circ(Cx;-z)<0$.
\end{itemize}
Therefore, all the assumptions of Corollary \ref{wn3}. are met and the thesis of Theorem \ref{floqth}. applies.
\end{proof}
\begin{remark}
It should be noted that Theorem \ref{floqth2}. does not involve any assumptions regarding the geometry of the set $\overline{K}$, apart from the topological requirement for nonemptiness of the interior of this set. Nevertheless, if the set $\overline{K}$ is sufficiently regular, then one may alter the assumptions regarding the polar cone $T_K(x)^\circ$. For instance, if $\overline{K}$ is a proximate retract, then these assumptions can be formulated in terms of the Clarke normal cone $N_K^c(x)$.
\end{remark}

\begin{corollary}
Let $K$ be an open bounded subset of $\R{N}$ such that $\bd K$ is inavariant under the action of $\langle C\rangle$. Let $F\colon I\times\overline{K}\map\R{N}$ satisfy $(\F_1)$-$(\F_4)$. Assume that, for each $x\in\bd K$, there exists a nonzero vector $v(x)\in N_K^b(x)$ such that the following condition holds
\begin{equation}\label{con2}
\forall\,t\in I\;\;\;F(t,x)\cap\{v(x)\}^\circ=\varnothing.
\end{equation}
Moreover, assume that either assumption {\em (v)} or {\em (vi)} of Theorem \ref{floqth}. is satisfied. Then the Floquet boundary value problem \eqref{floquet} possesses a solution.
\end{corollary}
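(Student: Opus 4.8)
The plan is to derive this corollary as a direct specialization of Theorem \ref{floqth2}, by showing that the single polar-cone requirement \eqref{con2} simultaneously forces both tangency hypotheses (i) and (ii) of that theorem while retaining the same normal field $v(\cdot)$. First I would unpack \eqref{con2}: since $\{v(x)\}^\circ=\{y\in\R{N}\colon\langle v(x),y\rangle\<0\}$, the condition $F(t,x)\cap\{v(x)\}^\circ=\varnothing$ is equivalent to the strict inequality $\langle v(x),y\rangle>0$ for every $t\in I$ and every $y\in F(t,x)$. In particular $\langle v(x),y\rangle\neq 0$ throughout $F(t,x)$, so $F(t,x)\cap\{v(x)\}^\bot=\varnothing$; this is exactly condition (i) of Theorem \ref{floqth2}, and the vectors $v(x)$ are, by hypothesis, nonzero regular normals in $N_K^b(x)$, as that theorem demands.

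Next I would verify the sign-product condition (ii). By $(\F_1)$ each value $F(0,x)$ is nonempty and compact, so the continuous map $y\mapsto\langle v(x),y\rangle$ attains its infimum there; the strict positivity established above then gives $\min_{y\in F(0,x)}\langle v(x),y\rangle>0$. The point that needs care is the terminal factor: \eqref{con2} is imposed at \emph{every} boundary point, and since $\bd K$ is invariant under $\langle C\rangle$ we have $Cx\in\bd K$, so \eqref{con2} applies at $Cx$ and yields $\langle v(Cx),z\rangle>0$ for all $z\in F(T,Cx)$, hence $\min_{z\in F(T,Cx)}\langle v(Cx),z\rangle>0$ by compactness of $F(T,Cx)$. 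The product of these two positive numbers is positive, which is precisely condition (ii).

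With both tangential hypotheses of Theorem \ref{floqth2} in hand, and since the corollary inherits verbatim the invariance of $\bd K$ under $\langle C\rangle$, the convexity, compactness, and boundedness of the values of $F$, and the alternative (v)/(vi), I would simply invoke Theorem \ref{floqth2} to produce a solution of the Floquet problem \eqref{floquet}.

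The main obstacle is conceptually small but must be handled explicitly: condition (ii) couples the normal at $x$ with the normal at $Cx$, so one cannot conclude anything at the terminal time until the invariance $Cx\in\bd K$ has been used to make \eqref{con2} available there, and one must pass from the pointwise strict inequalities to strictly positive minima via compactness of the values. A final routine check is that the upper Carath\'eodory data $(\F_1)$-$(\F_4)$ feed correctly into the framework underlying Theorem \ref{floqth2}: the associated superposition operator is upper semicontinuous with compact convex values, and all remaining verifications are identical to those already carried out in the proofs of Theorem \ref{floqth}. and Corollary \ref{wn3}.
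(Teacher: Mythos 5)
Your verification that \eqref{con2} forces conditions (i) and (ii) of Theorem \ref{floqth2} is correct as far as it goes: $\langle v(x),y\rangle>0$ on $F(t,x)$ gives $F(t,x)\cap\{v(x)\}^\bot=\varnothing$, and the invariance of $\bd K$ under $\langle C\rangle$ makes \eqref{con2} available at $Cx$, so compactness of the values yields strictly positive minima at $t=0$ and $t=T$ whose product is positive. But this is not where the content of the corollary lies, and your argument has a genuine gap at precisely the step you dismiss as a ``final routine check.'' Theorem \ref{floqth2} is stated for a \emph{bounded upper semicontinuous} multimap $F\colon I\times\overline{K}\map\R{N}$, i.e.\ usc jointly in $(t,x)$, whereas the corollary assumes only the Carath\'eodory-type hypotheses $(\F_1)$-$(\F_4)$: existence of a measurable selection of $F(\cdot,x)$ and upper semicontinuity of $F(t,\cdot)$ for a.a.\ $t$. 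These do not imply joint upper semicontinuity (take $F(t,x)=\{h(t)\}$ with $h$ bounded measurable and nowhere continuous), so Theorem \ref{floqth2} cannot be invoked directly, and the claim that the associated superposition operator is usc with compact convex values is unjustified for such data.

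The paper's proof is devoted almost entirely to bridging this regularity gap. It extracts a u-Scorza--Dragoni submultimap $F_0\subset F$, chooses closed sets $I_n\subset I$ with $\ell(I\setminus I_n)\<1/n$ on which $F_0$ is usc, and regularizes via $F_n(t,x):=\co F_0(P_n(t),x)$ with $P_n$ the metric projection onto $I_n$; each $F_n$ is then genuinely usc, and the tangency conditions persist because $F_n(t,x)\subset F(P_n(t),x)$ with $P_n(t)\in I$. Two further non-trivial verifications, both absent from your proposal, are then required: the degree hypothesis (vi) must be re-established for the approximations $\w{F}_n$ (the paper does this by a linear homotopy between $P_N\circ\w{F}_n$ and $P_N\circ\w{F}$, nonsingular on $\ker(id-C)\cap\bd K$ for $n$ large, using compactness of the Aumann integral and closedness of $\im(id-C)$), and the solutions $x_n$ of the approximate Floquet problems must be passed to the limit via weak compactness of $(\dot{x}_n)_\n$ and the convergence theorem (Theorem \ref{convergence}) to obtain a solution of \eqref{floquet} for the original $F$. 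Without this approximation-and-limit scheme the corollary is not a specialization of Theorem \ref{floqth2}, and your proof does not establish it.
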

\begin{proof}
On the basis of \cite[Prop.5.1.]{deimling}, we know that there exists u-Scorza-Dragoni multimap $F_0\colon I\times\overline{K}\map\R{N}$ with compact convex values such that $F_0(t,x)\subset F(t,x)$ on $I\times\overline{K}$. Therefore for all $\n$ one may find a closed subset $I_n\subset I$ such that the Lebesgue measure $\ell(I\setminus I_n)\<\frac{1}{n}$ and the restriction of $F_0$ to $I_n\times\overline{K}$ is usc. We can assume, w.l.o.g., that the family $\{I_n\}_\n$ is increasing. \par Let $F_n\colon I\times\overline{K}\map\R{N}$ be such that $F_n(t,x):=\co F_0(P_n(t),x)$, where $P_n\colon I\map I_n$ is a metric projection onto $I_n$. It is clear that $F_n$ is a bounded usc multimap with compact convex values. Moreover, this map satisfies also hypotheses (i) and (ii) of Theorem \ref{floqth2}. Indeed, assumption \eqref{con2} entails \[\forall\,t\in I\;\forall\,y\in F_n(t,x)\;\;\;\langle v(x),y\rangle\neq 0\] and \[\forall\,y\in F_n(0,x)\;\forall\,z\in F_n(T,Cx)\;\;\;\langle v(x),y\rangle\cdot\langle v(Cx),z\rangle>0.\] \par Since the Aumann integral $\int_0^TF(t,x_0)\,dt$ is compact and the subspace $\im(id-C)$ is closed, there is $\eps>0$ such that \[B\left(\int_0^TF(t,x_0)\,dt,\eps\right)\cap\im(id-C)=\varnothing\;\text{ for }x_0\in\ker(id-C)\cap\bd K.\] One easily sees that \[\int_0^Tw(t)\,dt\in D\left(\int_0^TF(t,x_0)\,dt,\frac{2c}{n}\right)\] for every integrable selection $w$ of the multimap $F_n(\cdot,x_0)$. Thus, if we denote the map $\int_0^TF_n(t,\cdot)\,dt$ by $\w{F}_n$, then $\w{F}_n(x_0)\subset B\left(\w{F}(x_0),\eps\right)$ for $n$ large enough. Consequently, \[\int_0^TF_n(t,x_0)\,dt\cap\im(id-C)=\varnothing\;\text{ for }x_0\in\ker(id-C)\cap\bd K.\] \par Now, let us define a multivalued homotopy $H\colon[0,1]\times(\ker(id-C)\cap\overline{K})\map\ker(id-C)$ by the formula \[H(\lambdaup,x_0):=\lambdaup P_N\circ\w{F}(x_0)+(1-\lambdaup)P_N\circ\w{F}_n(x_0).\] It is clear that $H$ is an upper semicontinuous map with convex compact values, given that the projector $P_N$ is linear and $\w{F}$, $\w{F}_n$ are also convex compact valued usc multis. Observe that $\w{F}_n(x_0)\subset D(0,c+\eps)$ and as a result $H(\lambdaup,x_0)\subset D\left(0,||P_N||_{\mathcal L}(c+\eps)\right)$. The latter means that $H$ is also a compact map. Take $x_0\in\ker(id-C)\cap\bd K$. Notice that $0\not\in P_N(B(\w{F}(x_0),\eps))$. On the other hand $H(\lambdaup,x_0)=P_N(\lambdaup\w{F}(x_0)+(1-\lambdaup)\w{F}_n(x_0))\subset P_N(B(\w{F}(x_0),\eps))$. Hence, $0\not\in H([0,1]\times(\ker(id-C)\cap\bd K))$. Relying on the homotopy invariance of the topological degree for the class ${\mathcal M}$ (see \cite[11.4]{deimling}) we obtain the equality 
\begin{align*}
\deg(P_N\circ\w{F}_n,\ker(id-C)\cap K,0)&=\deg(H(0,\cdot),\ker(id-C)\cap K,0)\\&=\deg(H(1,\cdot),\ker(id-C)\cap K,0)\\&=\deg(P_N\circ\w{F},\ker(id-C)\cap K,0).
\end{align*}
We have shown, therefore, that the assumption (vi) of Theorem \ref{floqth}. is fulfilled.\par By virtue of Theorem \ref{floqth2}. we obtain a solution $x_n\in C(I,\overline{K})$, for $n$ sufficiently large, of the following Floquet boundary value problem
\[\begin{gathered}
\dot{x}(t)\in F_n(t,x(t)),\;\mbox{a.e. on }I,\\
x(T)=Cx(0).
\end{gathered}\]
Actually, $x_n$ satisfies
\[\begin{gathered}
\dot{x}_n(t)\in F(t,x_n(t)),\;\mbox{for a.a. }t\in I_n,\\
x_n(T)=Cx_n(0).
\end{gathered}\]
Keeping in mind that sequences $(x_n)_\n\subset C(I,\R{N})$ and $(\dot{x}_n)_\n\subset L^2(I,\R{N})$ are bounded one can show (compare \cite[Lem.5.1.]{deimling}) that $||x_n-x||\to 0$ and $\dot{x}_n\rightharpoonup\dot{x}$ in $L^2(I,\R{N})$ for some absolutely continuous $x\colon I\to\R{N}$. Fix $n\in\mathbb{N}$. Put $\eps_n:=||x_n-x||$. Then, w.l.o.g., $\dot{x}_m(t)\in F(t,B(x(t),\eps_n))$ a.e. on $I_n$ for every $m\geqslant n$. Define \[{\mathcal F}_{\!\eps_n}:=\left\{w\in L^2(I,\R{N})\colon w(t)\in\overline{\co}\,F\left(t,B(x(t),\eps_n)\cap\overline{K}\right)\mbox{ a.e. on }I_n\right\}.\]
Since ${\mathcal F}_{\!\eps_n}$ is convex and closed in reflexive Banach space $L^2(I,\R{N})$, it is weakly closed. Thus $\dot{x}\in{\mathcal F}_{\eps_n}$. In fact,
\[\dot{x}(t)\in\bigcap_{m=n}^\infty\overline{\co}\,F\left(t,B(x(t),\eps_m)\cap\overline{K}\right)\mbox{ a.e. on }I_n.\]
Clearly, the assumptions of Theorem \ref{convergence}. are satisfied and as a consequence $\dot{x}(t)\in F(t,x(t))$ for a.a. $t\in I_n$. Since the index $n$ was arbitrary and $\ell(\bigcup_\n I_n)=\ell(I)$, we finally obtain that $\dot{x}(t)\in F(t,x(t))$ a.e. on $I$. At the same time $x(T)=\lim\limits_{n\to\infty}x_n(T)=\lim\limits_{n\to\infty}Cx_n(0)=Cx(0)$, completing the proof.
\end{proof}
\begin{remark}
Let us pay attention to the fact that $\co T_K(x)=N_K^b(x)^\circ\subset\{v(x)\}^\circ$. Hence, \eqref{con2} entails $\forall\,t\in I\;\;F(t,x)\cap\co T_K(x)=\varnothing$ - the opposite to the Nagumo's weak tangency condition.
\end{remark}


\begin{thebibliography}{99}
\bibitem{andres1} J. Andres, L. Malaguti, V. Taddei, {\it Floquet boundary value problems for differential inclusions: A bound sets approach}, Z. Anal. Anwend. 20 (2001), 709-725
\bibitem{andres2} J. Andres, L. Malaguti, V. Taddei, {\it Bounded solutions of Carath\'eodory differential inclusions: A bound sets approach}, Abstr. Appl. Anal. 9 (2003), 547-571.
\bibitem{andres3} J. Andres, L. Malaguti, V. Taddei, {\it A bounding functions approach to multivalued boundary value problems}, Dynam. Systems Appl. 16 (2007), 37-48.
\bibitem{prikry} Z. Artstein, K. Prikry, {\it Caratheodory selections and the Scorza Dragoni property}, J. Math. Anal. Appl. 127 (1987), 540-547.
\bibitem{aubin2} J. Aubin, {\it Viability Theory}, Birkh\"auser, Basel, 1991.
\bibitem{aubin} J. Aubin, A. Cellina, {\it Differential Inclusions}, Springer, Berlin, 1984.
\bibitem{fran} J. Aubin, H. Frankowska, {\it Set-Valued Analysis}, Birkh\"auser, Boston, 1990.
\bibitem{bader} R. Bader, W. Kryszewski, {\it On the solution sets of constrained differential inclusions with applications}, Set-Valued Anal. 9 (2001), 289-313.
\bibitem{bessaga} C. Bessaga, A. Pe\l czy\'nski, {\it Selected Topics in Infinite-dimensional Topology}, PWN, Warszawa, 1975.
\bibitem{blasi} F. S. de Blasi, L. G\'orniewicz, G. Pianigiani, {\it Topological degree and periodic solutions of differential inclusions}, Nonlinear
Anal. 37 (1999), 217-245.
\bibitem{clarke} F. H. Clarke, {\it Optimization and nonsmooth analysis}, Wiley-Interscience, New York 1983.
\bibitem{cornet} B. Cornet, M.-O. Czarnecki, {\it Repr\'esentations lisses de sous-ensembles \'epi-lipschitziens de $\R{N}$}, C.R. Acad. Sci. Paris S\'er. I 325 (1997), 475-480.
\bibitem{deimling} K. Deimling, {\it Multivalued differential equations}, Walter de Gruyter, Berlin-New York, 1992.
\bibitem{maw2} R. Gaines, J. Mawhin, {\it Coincidence Degree and Nonlinear Differential Equations}, Lecture Notes in Mathematics, 568, Springer-Verlag, Berlin, 1977. 
\bibitem{gaines} R. Gaines, J. Mawhin, {\it Ordinary differential equations with nonlinear boundary conditions}, J. Differential Equations, 26 (1977), 200-222. 
\bibitem{gorn} L. G\'orniewicz, {\it Topological fixed point theory of multivalued mappings}, Second ed., Springer, Dordrecht, 2006.
\bibitem{urruty} J.-B. Hiriart-Urruty, {\it Tangent cones, generalized gradients and mathematical programming}, Math. Oper. Res. 4 (1979), 79-97.
\bibitem{zabreiko} M. A. Krasnosel'skii, P. P. Zabreiko, {\it Geometrical Methods of Nonlinear Analysis}, Springer-Verlag, Berlin, 1984.
\bibitem{lewicka} M. Lewicka, {\it Locally lipschitzian guiding function method for ODEs}, Nonlinear Anal. 33 (1998), 747-758.
\bibitem{mawhin} J. Mawhin, {\it Bound sets and Floquet boundary value problems for nonlinear differential equations}, Univ. Iagel. Acta Math. 36 (1998), 41-53. 
\bibitem{maw} J. Mawhin, {\it Topological degree methods in nonlinear boundary value problems}, Regional Conference in Mathematics, vol. 40, AMS, Providence, RI, 1979.
\bibitem{ricceri} B. Ricceri, A. Villani, {\it Separability and Scorza-Dragoni's property}, Matematiche (Catania), 37 (1982), no.1, 156-161.
\bibitem{warga} J. Warga, {\it Fat homeomorphisms and unbounded derivative containers}, J. Math. Anal. Appl. 81 (1981), 545-560.
\end{thebibliography}
\end{document}